\newcommand*{\defeq}{\stackrel{\text{def}}{=}}
\newcommand{\comment}[1]{}
\numberwithin{equation}{section} 
\numberwithin{figure}{section}
\numberwithin{table}{section}
\numberwithin{theorem}{section}
\newcommand{\TheTitle}{Derivative-Free Optimization of Noisy Functions via Quasi-Newton Methods} 
\newcommand{\TheAuthors}{A. S. Berahas, R. H. Byrd and J. Nocedal}
\title{{\TheTitle}\thanks{Submitted to the editors 3/27/2018.
\funding{The first author was supported by  National Science Foundation grant DMS-0810213. The second author was supported by National Science Foundation grant DMS-1620070. The third author was supported by  Department 
       of Energy grant DE-FG02-87ER25047.  }}}
\author{
  Albert S. Berahas\thanks{Department of Industrial Engineering and Management Sciences, Northwestern University, Evanston, IL
    (\email{albertberahas@u.northwestern.edu}).}
  \and
  Richard H. Byrd\thanks{Department of Computer Science, University of Colorado, Boulder, CO (\email{richard@boulder.edu}).}
  \and
  Jorge Nocedal\thanks{Department of Industrial Engineering and Management Sciences, Northwestern University, Evanston, IL
    (\email{j-nocedal@northwestern.edu}).}
}
\begin{document}

\maketitle

% REQUIRED
\begin{abstract}
  This paper presents a finite difference quasi-Newton method for the minimization of noisy functions. The method takes advantage of the scalability and power of BFGS updating, and employs an adaptive procedure for choosing the differencing interval $h$ based on the noise estimation techniques of Hamming \cite{hamming2012introduction} and Mor\'e and Wild \cite{more2011estimating}. This noise estimation procedure and the selection of $h$ are inexpensive but not always accurate, and to prevent failures the algorithm incorporates a recovery mechanism  
  that takes appropriate action in the case when the line search procedure is unable to produce an acceptable point. A novel convergence analysis is presented that considers the effect of a noisy line search procedure. Numerical experiments comparing the method to a function interpolating trust region method are presented.
\end{abstract}

% REQUIRED
\begin{keywords}
  derivative-free optimization, nonlinear optimization, stochastic optimization
\end{keywords}

% REQUIRED
\begin{AMS}
  90C56, 90C53, 90C30
\end{AMS}

\section{Introduction}
The BFGS method has proved to be a very successful technique for nonlinear continuous optimization, and recent work has shown that it is also very effective for nonsmooth optimization \cite{lewis2013nonsmooth,keskar2017limited,lewis2009nonsmooth}---a class of problems for which it was not designed for and for which few would have predicted its success. Moreover,  as we argue in this paper,  the BFGS method with finite difference approximations to the gradient can be the basis of a very effective algorithm for the derivative-free optimization  of  noisy objective functions.

It has  long been recognized in some quarters \cite{gould-private} that one of the best methods for (non-noisy) derivative-free optimization is the standard BFGS method with finite difference gradients.
However, its application in the noisy case has been considered problematic.
 The fact that finite differences can be unstable in the presence of noise  has motivated the development of alternative methods, such as direct search methods \cite{hooke1961direct,KoldLewiTorc03,DennTorc91,lewis2000direct,Wrig96,jones1993lipschitzian,audet2006mesh} and function interpolating trust region methods \cite{powell2006newuoa,Powe02,conn1997convergence,conn1998derivative,conn2009introduction,MaraNoce00a,wild2008orbit,maggiar2015derivative}, which use function evaluations at well spread out points---an indispensable feature when minimizing noisy functions. These algorithms do not, however, scale well with the number of variables. 

In contrast, the L-BFGS method for deterministic optimization is able to build useful quadratic models of the objective function at a cost that is linear in the dimension $n$ of the problem. Motivated by this observation, we propose a finite difference quasi-Newton approach for minimizing functions that contain noise.  
To ensure the reliability of finite difference gradients, we determine the differencing interval $h$ based on an estimate of the noise level (i.e., the standard deviation of the noise). For this purpose, we follow the difference-table technique pioneered by Hamming \cite{hamming2012introduction}, as improved and extended by Mor\'e and Wild \cite{more2011estimating}. This technique samples the objective function $f$ at a small number of equally spaced points  along a random direction, and estimates the noise level from the columns of the difference table. Our optimization algorithm is adaptive, as it re-estimates the noise level and differencing interval $h$ during the course of the optimization, as necessary. 

An important ingredient in the method is the line search, which in our approach serves the dual purpose of computing the length of the step (when the interval $h$ is adequate) and determining when the differencing interval $h$ is not appropriate and should be re-estimated. When the line search is unable to find an acceptable point, the method triggers a \emph{recovery procedure} that chooses between several courses of action. The method must, in general, be able to distinguish between the case when an unsuccessful step is due to a poor gradient approximation (in which case $h$ may need to be increased), due to nonlinearity (in which case $h$ should be maintained and the steplength $\alpha_k$ shortened), or due to the confusing effects of noise.

We establish two sets of convergence results for strongly convex objective functions; one for a fixed steplength strategy and one in which the steplength is computed by an Armijo backtracking line search procedure. The latter is novel in the way it is able to account for noisy function evaluations  during the line search.  In both cases, we prove linear convergence to a neighborhood of the solution,  where the size of the neighborhood is determined by the level of noise.

The results of our numerical experiments suggest that the proposed algorithm is competitive, in terms of function evaluations, with a well-known function interpolating trust region method, and that it scales better with the dimension of the problem and parallelizes easily.
Although the reliability of the noise estimation techniques  can be guaranteed only when the noise in the objective function is i.i.d., we have observed that the algorithm is often effective on problems in which this assumption is violated.%does not hold.

Returning to the first paragraph in this section, we  note that it is the power of quasi-Newton methods, in general, rather than the specific  properties of BFGS that have proven to be so effective in a surprising number of settings, including the subject of this paper. Other quasi-Newton methods  could also prove to be useful. The BFGS method is  appealing because it is simple, admits a straightforward extension to the large-scale setting, and is supported by a compact and elegant convergence theory.

The paper is organized into 6 sections. We  conclude this section with some background and motivation for this work. In Section \ref{sec:det_case} we compare the performance of  finite difference L-BFGS and a function interpolating trust region method on smooth objective functions that do not contain noise.  Section~\ref{sec:noise} presents the algorithm for the minimization of noisy functions, which is analyzed in Section~\ref{analysis}. Section~\ref{numerical} reports the results of numerical experiments, and  Section~\ref{finalr}  summarizes the main findings of the paper.

\subsection{Background}

Although not a mainstream view, the finite difference BFGS method is regarded by some researchers as one of the most effective methods for derivative-free optimization of smooth functions that do not contain noise, and countless users have employed it knowingly or unknowingly in that setting. To cite just one example, if a user supplies only function values, the {\tt  fminunc} {\sc matlab} function automatically invokes a standard finite difference BFGS method. 
%\st{In fact, the motivation for the research reported in this paper stems from the competition established in 2015 to test algorithms for black-box (non-noisy) optimization} \url{http://bbcomp.ini.rub.de/}. \st{The winner was the  finite difference BFGS method for bound constrained optimization implemented in the } {\sc knitro} \st{ package }\cite{byrd2006knitro}.)

  Nevertheless, much research has been performed in the last two decades  to design  other approaches for derivative-free optimization \cite{rios2013derivative,conn2009introduction}, most prominently  direct search methods \cite{hooke1961direct,KoldLewiTorc03,DennTorc91,lewis2000direct,Wrig96,jones1993lipschitzian,audet2006mesh} and  function interpolating trust region methods \cite{powell2006newuoa,Powe02,conn1997convergence,conn1998derivative,conn2009introduction,MaraNoce00a,wild2008orbit,maggiar2015derivative}. Earlier approaches include the Nelder-Mead method \cite{NeldMead65}, simulated annealing \cite{kirkpatrick1983optimization} and genetic algorithms \cite{holland1975adaptation,bethke1978genetic}. 
  
Function interpolating trust region methods are  more robust in the presence of noise than other techniques for derivative-free optimization. This was demonstrated by Mor\'e and Wild \cite{more2009benchmarking}  who report that the {\sc newuoa} \cite{powell2006newuoa}  implementation of the function interpolation approach was   more reliable and efficient in the minimization of both smooth and noisy functions than  the direct search method implemented in  {\sc appspack}  \cite{gray2006algorithm} and the Nelder-Mead method {\sc nmsmax}  \cite{Higham:MCT}.  %However, numerical experience of the FI method in the noisy setting is not extensive, and its full potential is yet to be achieved. 
Their experiments show that direct search methods are  slow and unable to scale well with the dimension of the problem; their main appeal is that they are robust due to their expansive exploration of $\mathbb{R}^n$, and are easy to analyze and implement. In spite of its efficiency,  the function interpolation approach is limited by the high  linear algebra  cost of the iteration; straightforward implementations require $\mathcal{O}(n^6)$ operations, and although this can be reduced to %\st{$\mathcal{O}(n^3)$ or} 
$\mathcal{O}(n^4)$ operations by updating factorizations, this cost is still high for large dimensional problems \cite{conn2009introduction,powell2006newuoa}.

 An established finite difference BFGS algorithm is the \emph{Implicit Filtering} method of Kelley \cite{kelley2011implicit,choi2000superlinear}, which is designed for the case when noise decays as the iterates approach the solution. That method enjoys deterministic convergence guarantees to the solution, which are possible due to the assumption that noise can be diminished at any iteration, as needed.  In this paper we assume that noise in the objective function does not decay to zero, and establish convergence to a neighborhood of the solution, which is the best we can hope for in this setting. 
 
Barton \cite{barton1992computing} describes a procedure for updating the finite difference interval $h$  in the case when the noise is multiplicative (as is the case of roundoff errors). He assumes that a bound on the noise is known, and notes that the optimal choice of $h$ depends (in the one dimensional case) on $|f(x)|/|f''(x)|$. Since estimating this ratio can be expensive, he updates $h$ by observing  how many digits change between $f(x_k+h)$ and $f(x_k)$, beyond the noise level. If this change is too large, $h$ is decreased at the next iteration; if it is too small $h$ is increased. He tested this technique successfully in conjunction with finite difference quasi-Newton methods. In this paper, we assume that the noise level is not known and must be estimated, and discuss how to safeguard against inaccurate estimates of the noise level.

%%%%%%%%%%%%%%%%%%%
\section{Optimization of Smooth Functions}
\label{sec:det_case}

Before embarking on our investigation of noisy objective functions we consider the case when noise is not present, and compare the performance of an L-BFGS method that uses finite differences to approximate the gradient (FDLM) and a function interpolating trust region method, abbreviated from now on as \emph{function interpolating} (FI) method.
This will allow us to highlight the strengths of each approach, and help set the stage for the investigation of the noisy case. 

%Before embarking on our investigation of noisy objective functions we consider the case when noise is not present, and compare the performance of a function interpolating trust region (FI) method and a straightforward implementation of the L-BFGS method that uses finite differences to approximate the gradient (FDLM). This will allow us to highlight the strengths of each approach, and help set the stage for the investigation of the noisy case. 
 
We write the deterministic optimization problem as 
\begin{equation}
\label{prob1}
\min_{x\in \mathbb{R}^n} f(x),
\end{equation}
where $f: \mathbb{R}^n \rightarrow \mathbb{R}$ is twice continuously differentiable. 
In our numerical investigation, we employ the FI method of Conn, Scheinberg and Vicente \cite{conn2009introduction}. It begins by evaluating  the function at $2n+1$ points along the coordinate directions, and as the iteration progresses, builds a simple quadratic model with minimum norm Hessian. At every iteration a new function value is computed and stored. Once $(n+1)(n+2)/2$ function values are available, a fully quadratic model $m(x)$ is constructed by interpolation. Every new iterate $x_{k+1}$ is given by
\[
       x_{k+1} = \arg  \min_x \{ m(x)| \, \| x-x_k\|_2 \leq \Delta_k \},
\]
where the trust region radius $\Delta_k$ is updated using standard rules from derivative-free optimization \cite{conn2009introduction}. We chose the method and software described in \cite{conn2009introduction} because it embodies the state-of-the-art  of FI methods and yet is simple enough to allow us to evaluate all its algorithmic components. 

The finite difference L-BFGS  (FDLM) method is given by
\begin{align}	\label{qn}
	x_{k+1} = x_k - \alpha_k H_k \nabla_h f(x_k),
\end{align}
where $H_k$ is an approximation to the inverse Hessian,  $\nabla_h f(x_k)$ is a finite difference approximation to the gradient of $f$, and the steplength $\alpha_k$ is computed by an Armijo-Wolfe line search; see \cite{mybook}. 
 We test both forward  differences (FD) 
\begin{align}		\label{sm_f_h}
	[\nabla_{ h,{\mbox{\tiny FD}}} f(x)]_i = \frac{f(x + h_{\mbox{\tiny FD}}e_i) - f(x)}{h_{\mbox{\tiny FD}}},			
\quad\mbox{where}\quad
		h_{\mbox{\tiny FD}}= \max\{1, |x|\}\, (\epsilon_{\rm \rm m})^{1/2},
\end{align}
	and  central differences (CD)
\begin{align}		\label{sm_c_h}
	[\nabla_{ h,{\mbox{\tiny CD}}} f(x)]_i = \frac{f(x + h_{\mbox{\tiny CD}}e_i) - f(x - h_{\mbox{\tiny CD}}e_i)}{2h_{\mbox{\tiny CD}}}, \quad\mbox{where}\quad
		h_{\mbox{\tiny CD}}=\max\{1, |x|\}\, (\epsilon_{\rm \rm m})^{1/3}.
\end{align}
Here  $\epsilon_{\rm m}$ is machine precision and $e_i \in \mathbb{R}^n$ is the $i$-th canonical vector. (We note in passing that by employing complex arithmetic \cite{squire1998using}, highly accurate derivatives can be obtained through finite differences using a very small $h$. The use of complex arithmetic is, however, not always possible in practice.)

%\ab{AB: Should we add a comment here about using complex variables to estimate the gradient in the smooth setting?}

A  sample of results is given in Figure~\ref{detres},  which plots the optimality gap $ (f(x_k)-f^\star )$ versus number of function evaluations. 
These 4 problems are from the Hock-Schittkowski collection; see \cite{schttfkowski1987more} where their description and optimal function value $f^\star$ are given. 
 For the FDLM method, the number of function evaluations at the $k$-th iteration  is $n + t^k_{\rm ls}$ (FD) or  $2n+t^k_{\rm ls}$  (CD), where $t^k_{\rm ls}$ is the number of function evaluations performed  by the Armijo-Wolfe line search at the $k$-th iteration.
  The FI method is described in \cite{conn2009introduction}, and we refer to as {\tt DFOtr}; it  normally computes only one  new function value per iteration. As a benchmark, we also report the results of {\tt NOMAD} \cite{audet2006mesh,abramson2011nomad}, a well known direct-search method. It is clear that {\tt NOMAD} is by far the slowest code.

\begin{figure}[H]
\begin{centering}

%\includegraphics[width=0.24\textwidth]{Figs_paper/smooth/s201.eps}
%\includegraphics[width=0.24\textwidth]{Figs_paper/smooth/s286.eps}
%\includegraphics[width=0.24\textwidth]{Figs_paper/smooth/s303.eps}
%\includegraphics[width=0.24\textwidth]{Figs_paper/smooth/s334.eps}
%Figures/%\includegraphics[width=0.24\textwidth]{../Figures/No_Noise/s271.eps}
%\includegraphics[width=0.24\textwidth]{../Figures/No_Noise/s334.eps}
%\includegraphics[width=0.24\textwidth]{../Figures/No_Noise/s293.eps}
%\includegraphics[width=0.24\textwidth]{../Figures/No_Noise/s289.eps}

\includegraphics[width=0.24\textwidth]{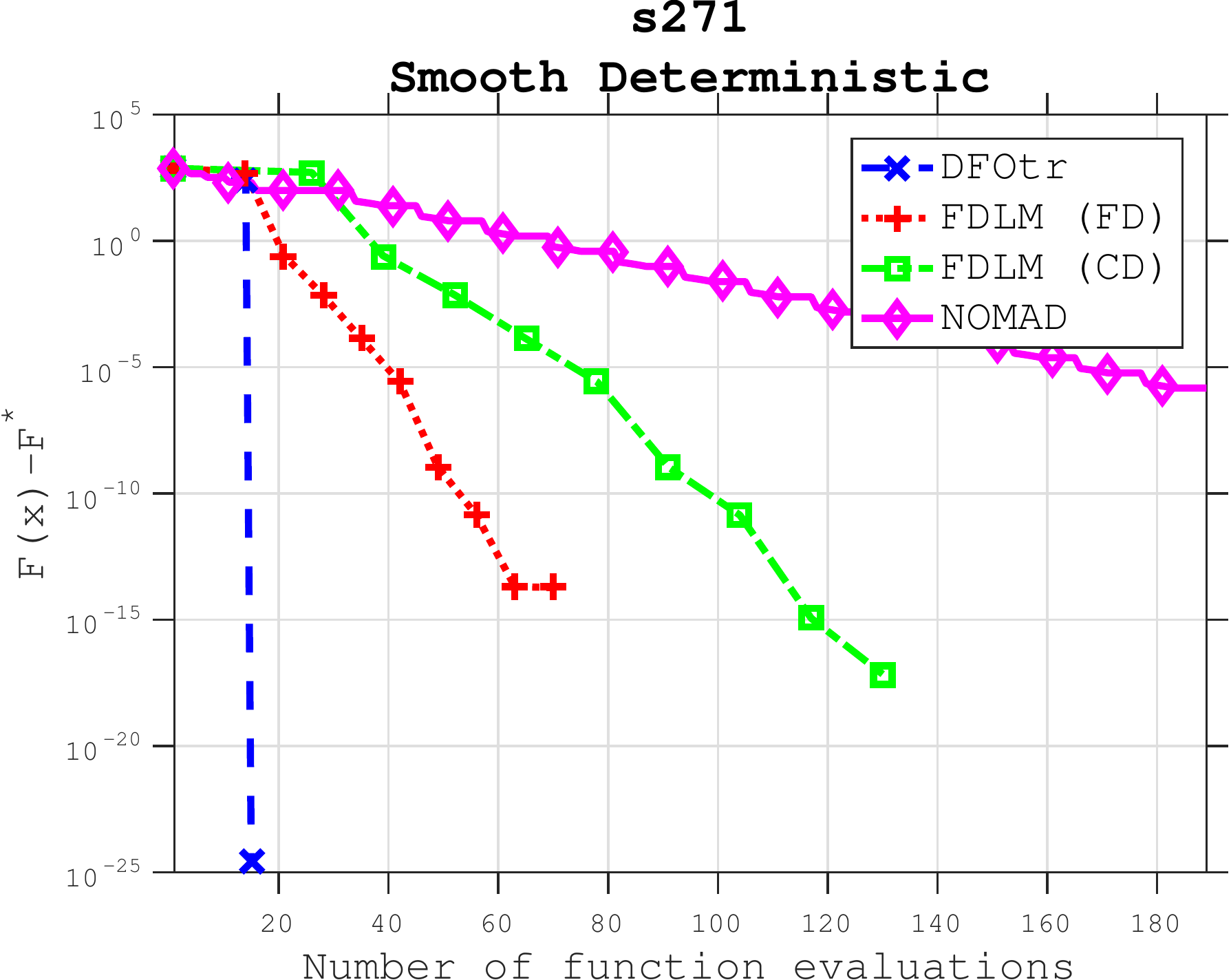}
\includegraphics[width=0.24\textwidth]{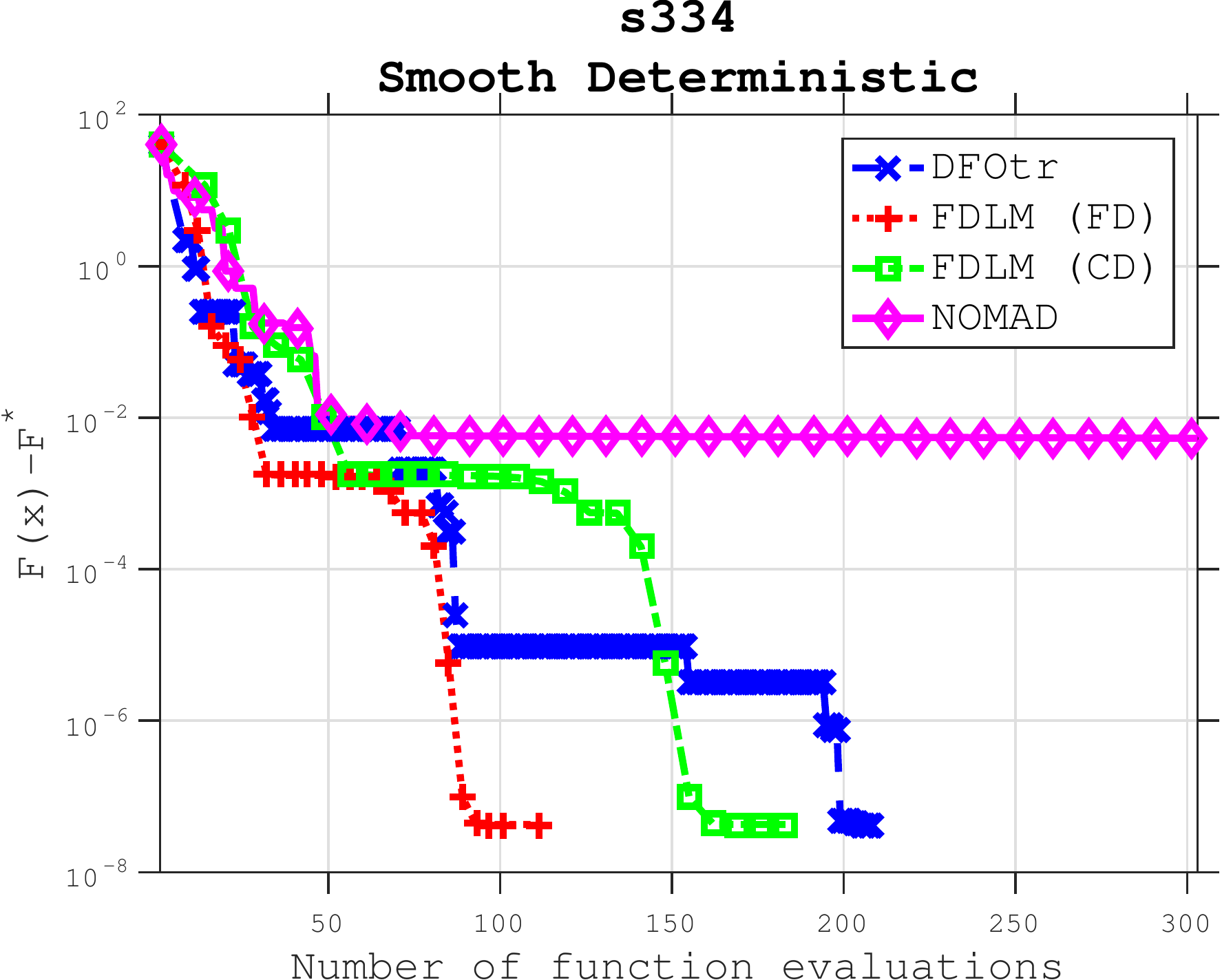}
\includegraphics[width=0.24\textwidth]{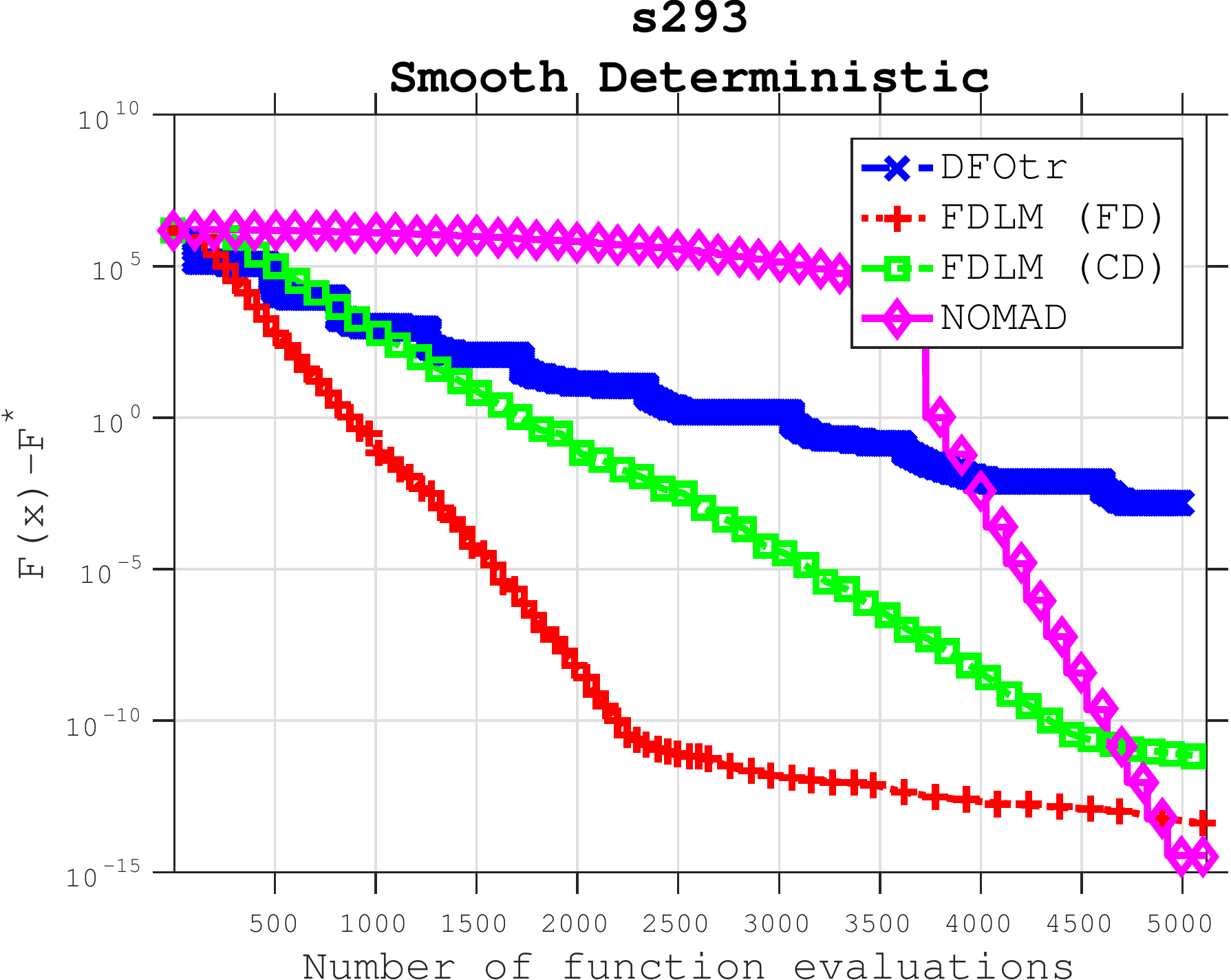}
\includegraphics[width=0.24\textwidth]{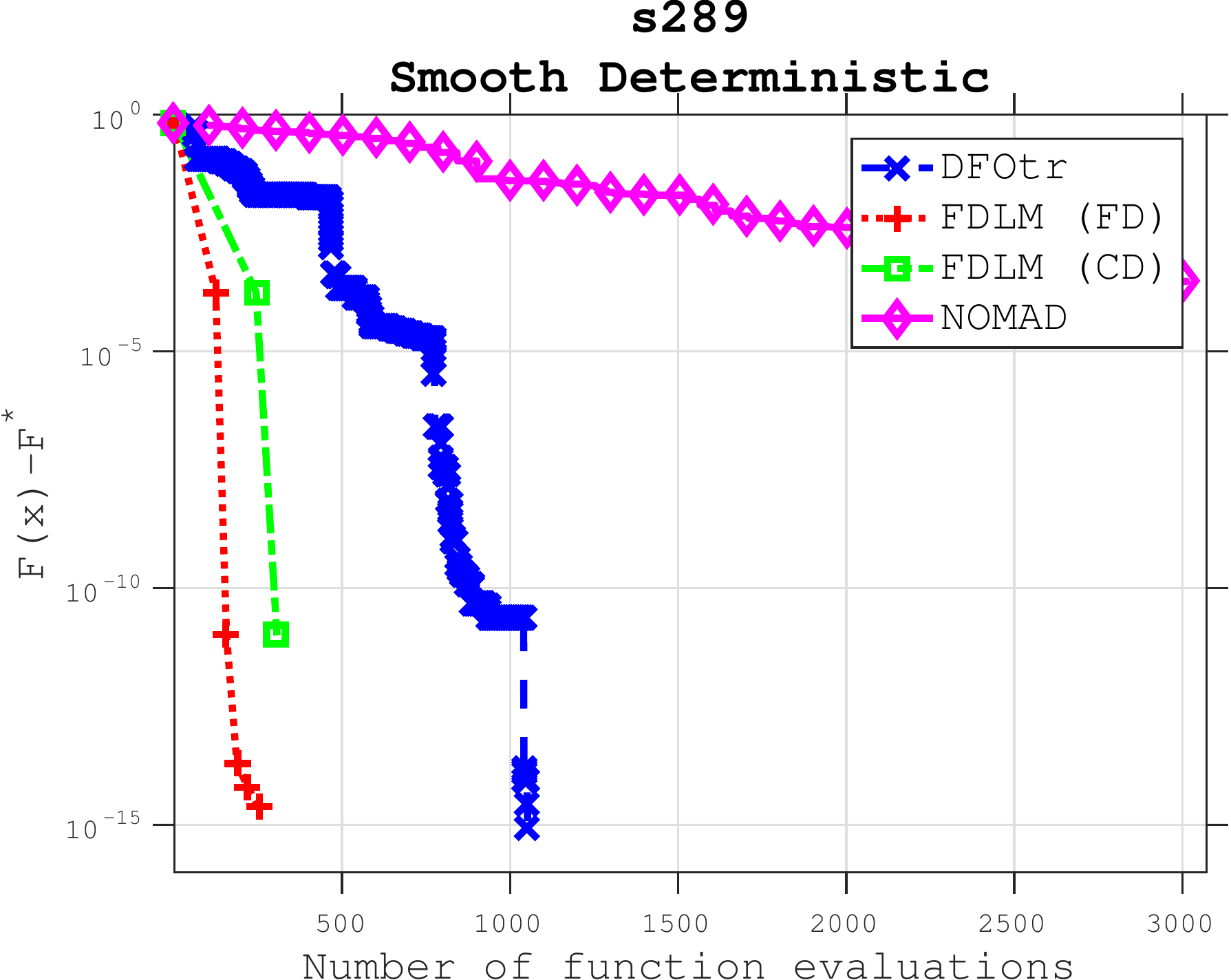}

\par\end{centering}
\caption{\small (Problems without Noise) Performance of the function interpolating trust region method {\tt (DFOtr)} described in \cite{conn2009introduction}, {\tt NOMAD} \cite{audet2006mesh,abramson2011nomad}, and the finite difference L-BFGS method ({\tt FDLM}) using forward or central differences, on 4 problems from the Hock-Schittkowski collection \cite{schttfkowski1987more}.  
%Details about these problems can be found in Appendix~\ref{app:det_details}.
}
\label{detres}
\end{figure}

The first problem in Figure~\ref{detres} is quadratic; the FI method will generally have better performance in this case because it has finite termination while the FDLM does not.  The rest of the plots in Figure~\ref{detres} illustrate other behavior of the methods observed in our tests.  Performance and data profiles are reported in Appendix~\ref{extended_schitt_smooth}.
%
%\ab{Performance profiles for all the Hock and Schittkowki test problems considered can be found in Appendix \ref{app:det_case_schitt}.}
%
We do not report  {\sc cpu} times  because the FI code of Conn, Scheinberg and Vicente \cite{conn2009introduction}  is not designed to be efficient in this respect, requiring $\mathcal{O}(n^6)$ flops per iteration, and is thus much slower than the FDLM method as $n$ increases. There exist, much faster FI codes (albeit much more complex)   such as that of  Powell \cite{powell2006newuoa}, whose linear algebra cost is only $\mathcal{O}(n^4)$. Regardless of the implementation, scalability remains one of the main limitations of FI methods. To show that the FDLM approach can deal with problems that are out of reach for FI methods, we report in Table~\ref{rosen} results on the extended Rosenbrock function of various dimensions (as a reference, {\tt DFOtr} requires more than 1,800 seconds for $n =100 $).

%\ab{see Appendix \ref{app:rosen_details} for more details}.

\begin{table}[htp!]
\centering
\caption[Caption for LOF]{\small {\sc cpu}  time (in seconds) required by the finite difference L-BFGS method ({\tt FDLM}), using forward and central differences, to reach the accuracy $f-f^\star <10^{-6}$, on the {\tt Extended Rosenbrock} function of various dimensions ($n$). The results were obtained on a workstation with 32GB of RAM and 16 Intel Xeon X5560 cores running at 2.80GH.
 }
\small\medskip
\begin{tabular}{|c||cccccc|}  \hline  
$n$                                                      & 10 & 50 & 100 & 1000 & 2000 & 5000  \\ \hline\hline
%\begin{tabular}[c]{@{}c@{}}time \\ FD\end{tabular}  & $1.8\times10^{-1}$ & $3.1\times10^{-1}$ & $7.6\times10^{-1}$  & $3.3\times10^{2}$  & $2.9\times10^{3}$ & $5.5\times10^{4}$     \\ \hline
%\begin{tabular}[c]{@{}c@{}}time \\ CD\end{tabular}  & $2.0\times10^{-1}$   & $4.0\times10^{-1}$ & $1.2$ & $5.6\times10^{2}$  & $5.9\times10^{3}$ & $1.0\times10^{5}$ \\
%\hline      
%\end{tabular}
\begin{tabular}[c]{@{}c@{}} FD\end{tabular}  & $1.8\times10^{-1}$ & $3.1\times10^{-1}$ & $7.6\times10^{-1}$  & $3.3\times10^{2}$  & $2.9\times10^{3}$ & $5.5\times10^{4}$     \\ \hline
\begin{tabular}[c]{@{}c@{}} CD\end{tabular}  & $2.0\times10^{-1}$   & $4.0\times10^{-1}$ & $1.2$ & $5.6\times10^{2}$  & $5.9\times10^{3}$ & $1.0\times10^{5}$ \\
\hline      
\end{tabular}
\label{rosen}
\end{table}

Overall, our numerical results suggest that  FDLM  is a very competitive method for (non-noisy) derivative-free optimization, particularly for large problems.
%(Comparison with finite difference BFGS are not reported in \cite{more2009benchmarking}.) 
Conclusive remarks about the relative performance of the FI and FDLM approaches are, however, difficult to make because there are a variety of FI methods that follow significantly different approaches than the method tested here. For example, the codes by Powell \cite{powell2006newuoa,Powe02} include two trust region radii and employ a different procedure for placing the interpolation points. Some implementations of FI methods start with $\mathcal{O}(n^2)$ function values in order to build a quadratic model; other implementations only require $\mathcal{O}(n)$ function values to start. 
%\st{FI methods also differ in the design (or absence) of the geometry phase, which is a mechanism for choosing and spreading sample points in $\mathbb{R}^n$.}

% and either start with a linear model \cite{X} or a least Frobenius norm quadratic model \cite{powell2006newuoa}. The FI implementation that we used in this paper \cite{conn2009introduction} constructs a minimum Frobenius norm quadratic model using $\mathcal{O}(n)$ function evaluations, and increases the number of interpolation points used in the model construction as function evaluations become available.}

\subsection{Discussion}

One of the appealing features of the function interpolating method is that it can move after only $1$  function evaluation, as opposed to the $\mathcal{O}(n)$ function evaluations required per iteration by the FDLM approach. However, if the model is not accurate (or the trust region is too large) and results in an unsuccessful step, the FI approach may require many function evaluations before the model is corrected (or the trust region is properly adjusted); this can be seen in the second, third, and fourth plots in Figure~\ref{detres}.
 On the other hand, finite difference approximations to the gradients \eqref{sm_f_h}-\eqref{sm_c_h} carry some risks%\st{because a reliable value of the differencing interval $h$ requires knowledge of the second derivative of the objective function, which is not accounted for in }. ,
, as discussed in Section~\ref{discussion}. We did not encounter  difficulties in our tests on smooth functions, but this is an issue that requires careful consideration in general.

The FI and FDLM methods both construct quadratic models of the objective function, and compute a step as the minimizer of the model, which distinguishes them from most direct and pattern search methods. But the two methods differ significantly in nature. FI methods define the quadratic model by interpolating previously evaluated function values and (in some variants) by imposing a minimum-norm change with respect to the previous model. The estimation of the gradient and Hessian is done simultaneously, and the gradient approximation becomes accurate only when the trust region is small; typically near the solution \cite{conn2008geometry}. In FI methods it is the overall quality of the model that matters. The location of the sampling points is determined by the movement of the FI algorithm. These points  tend to lie along a subspace of $\mathbb{R}^n$, which can be harmful to the interpolation process. To guard against this, many FI methods include a procedure (a geometry phase) for spreading the sample points  in $\mathbb{R}^n$ so that the interpolation problem is not %\st{too} 
badly conditioned. 

In contrast to FI methods, the finite difference BFGS method invests significant computation in the estimation of the gradient ($n$ or $2n$ function evaluations) and delegates the construction of the model to BFGS updating. The function evaluations used in the estimation of the gradient parallelize easily, and the linear algebra costs are quite modest, as updating the model and computing a step can be performed in a small multiple of $n$ using the limited memory BFGS approach \cite{mybook}. Practical experience indicates that BFGS and L-BFGS give rise to well-scaled search directions that typically require little or no extra cost during the line search. Placement of the sample points is  along a linearly independent set of directions (during finite differencing) and in this sense the method has some resemblance with pattern search methods -- but their similarities stop there.

It is rare to find in the derivative-free optimization (DFO)  literature comparisons between the finite difference BFGS method and direct search or function interpolating trust region methods, even when testing smooth objective functions without noise. One reason for this omission may be the perception that finite difference based methods are inefficient as they require at least $n$ function evaluations per iteration, whereas other methods for DFO are more frugal in this respect. In an early paper, Powell  \cite{powell1972unconstrained} wrote in regards to derivative-free optimization: ``I believe that eventually the better methods will not use derivative approximations.''
In this paper, we argue that a finite difference L-BFGS method is, indeed, an effective technique  for the minimization of certain classes of noisy functions.

These observations are a good starting point for our discussion of stochastic optimization problems. %\st{, where some of the weaknesses of the methods can become their strengths and vice versa.}

%The model constructed by the FI approach is not necessarily convex, and therefore it is appropriate to use a trust region approach, which carries with it some drawbacks such as the fact that the shape of the trust region is spherical. The FDLM method always constructs convex models because the Armijo-Wolfe line search ensures that the interpolating gradients admit a convex interpolant and because of the properties of the BFGS formula.

%\ab{In the FI method, sampling points are spread out according to the movement of the iterates, which often tend to lie along a subspace of $\mathbb{R}^n$, and this is potentially harmful when performing interpolation.} Since BFGS updating is sequential there are no problems concerning the placement of the points where interpolation takes place. The Wolfe line search ensures that all the models are convex, should has an appeal.
%

%%%%%%%%%%%%%%%%%%%%%
\section{Optimization of Noisy Functions}
\label{sec:noise}
%%%%%%%%%%%%%%%%%%%%%

%\st{Let us now consider the minimization of noisy functions.}
We study problems of the form
\begin{align}	\label{objn}
	   \min_{x\in \mathbb{R}^n} f(x) = \phi(x)+  \epsilon(x).
%	   \quad\mbox{and} \quad
%           \min_{x\in \mathbb{R}^n} f(x)= \phi(x)(1+ \epsilon(x)) .
           \end{align}
 We assume that $\phi: \mathbb{R}^n \rightarrow \mathbb{R}$ is a smooth twice continuously differentiable function and $\epsilon(x)$ is a random variable whose distribution is independent of $x$. (The notation $\epsilon(x)$ simply means that at any $x$ we compute the realization of a random variable.) The model \eqref{objn}  covers the case of multiplicative noise, i.e., when $f(x)= \phi(x)(1+ \hat \epsilon(x))$ and $\hat \epsilon(x)$ is a random variable. To establish convergence results, we will assume that $\epsilon(x)$ is i.i.d and bounded, but our algorithm and presentation apply to the general model \eqref{objn}. Specifically, we are also interested in the case when the noise $\epsilon(x)$ is deterministic, as is the case of roundoff errors or when adaptive algorithms are part of the function evaluation.  
 
 \subsection{The Finite Difference Interval}

 Our method relies crucially on the computation of an appropriate finite difference parameter $h$. It has been shown by Mor\'e and Wild \cite{more2012estimating} that if one can estimate the level of noise in $f$, which we denote by $\sigma_f$, one can compute a nearly optimal $h$ for which the error in  approximating  $\nabla f(x)$ %by forward differences is $\mathcal{O}(\sigma_f^{1/2})$, and $\mathcal{O}(\sigma_f^{2/3})$ by central differences. 
is $\mathcal{O}(\sigma_f^{1/2})$ and $\mathcal{O}(\sigma_f^{2/3})$ for forward differences and central differences, respectively.   
  
The \emph{noise level} $\sigma_f$ of the function $f$ given in \eqref{objn} is defined as the standard deviation of $\epsilon(x)$, i.e.,
\begin{equation}   \label{sigman}
         \sigma_f= ({\rm Var}\{\epsilon(x)\})^{1/2}. 
 \end{equation}
This quantity can be estimated using the difference table technique proposed by Hamming \cite{hamming2012introduction}, as extended and refined by  Mor\'e and Wild \cite{more2011estimating}. %\st{This technique relies on the observation that high-order differences of a smooth function tend to zero rapidly, while the differences of the noise do not.} 
We denote our \emph{estimate} of the noise level of $f$ by $\epsilon_f$.

With $\epsilon_f$ in hand, we define the finite differencing interval as suggested in \cite{more2012estimating}. The $i$-th component of the forward difference approximation of the gradient of $f$ at $x$ is given by
	\begin{align}	\label{fd-noise}
	[\nabla_{ h,{\mbox{\tiny FD}}} f(x)]_i = \frac{f(x + h_{\mbox{\tiny FD}}e_i) - f(x)}{h_{\mbox{\tiny FD}}},			
\quad\mbox{where}\quad
		h_{\mbox{\tiny FD}} = 8^{1/4} \left(\frac{\epsilon_f}{\nu_2}\right)^{1/2},
	\end{align}
	and $\nu_2$ is an 
	%estimate of $\max_{x \in [x,x+\beta qv]}| \nabla^2 f(x)^Tv |$. 
	estimate of $\max_{x \in [x,x+h_{\mbox{\tiny FD}}e_i ]}| \nabla^2 f(x)^T e_i |$.
	The central difference approximation is given by
	\begin{align}	\label{cd-noise}
	[\nabla_{ h,{\mbox{\tiny CD}}} f(x)]_i = \frac{f(x + h_{\mbox{\tiny CD}}e_i) - f(x - h_{\mbox{\tiny CD}}e_i)}{2h_{\mbox{\tiny CD}}}, \quad\mbox{where}\quad
		h_{\mbox{\tiny CD}} = 3^{1/3} \left(\frac{ \epsilon_f}{\nu_3}\right)^{1/3},
	\end{align}
	and $\nu_3$ is an estimate of the third derivative along $e_i$, in an interval of length $2 h_{\mbox{\tiny CD}}$ around $x$.  Since estimating 2nd or 3rd derivatives along each coordinate direction is expensive, in our implementation we perform this estimation once along a random direction, as discussed in Section~\ref{discussion}. In the sequel, we let $\nabla_{ h} f(x)$ denote \eqref{fd-noise} or \eqref{cd-noise} when the distinction is not important.
	
\subsection{Noise Estimation} 

 The noise level $\sigma_f$ of a function measures the uncertainty in the computed function values, and can be estimated using Hamming's  table of function differences \cite{hamming2012introduction}.
To generate this table, Mor\'e and Wild \cite{more2011estimating} first choose a random direction $v \in \mathbb{R}^n$ of unit norm, evaluate the function at  $q+1$  equally spaced points (with spacing $\delta$) along that ray, and compute the function differences 
\begin{subequations} 
\begin{align}
	&\Delta^0 f(x) = f(x), \label{func_diff0}\\
	&\Delta^{j+1} f(x) = \Delta^{j}[\Delta f(x)]=\Delta^j[f(x+\delta v)] - \Delta^j[f(x)], \quad j \geq 0. \label{func_diffk}
\end{align}
\end{subequations}
Let $x_i = x + u_i \delta v$, where $u_i = -q/2 + i$, for  $i=0,\dots,q$, 
%\st{$x_i = x \pm i\delta v$, for $i=0,...,q/2$}, 
denote the sample points centered around $x$.  Using the computed function differences one can build a  table whose entries are%$j$-th column is
\begin{align}   \label{htable}
	T_{i,j} = \Delta^j f(x_i), \qquad 1\leq j \leq q \quad \text{and } \quad 0 \leq i \leq q-j.
\end{align}
Hamming's  approach  relies on the fact that differences in $\phi$ tend to zero rapidly, while the differences in $\epsilon(x)$ are bounded away from zero  \cite{hamming2012introduction}. As a result, the noise level $\sigma_f$ can be estimated from the mean of the squares of the columns of this difference table \cite{more2011estimating}. Specifically, for each $j$ (where $j$ indexes a column of the difference table and also represents the order of differencing) one defines
\begin{align}  \label{table}
	 s^2_j \defeq \frac{\gamma_j}{q+1-j} \sum_{i=0}^{q-j}T_{i,j}^2, \qquad\mbox{where} \quad \gamma_j = \frac{(j!)^2}{(2j)!}.
\end{align}
Once an appropriate value of $j$ is identified, the \emph{noise estimate} $\epsilon_f$ is defined as
\begin{equation}    \label{ef}
          \epsilon_f \leftarrow s_j.
\end{equation}

Mor\'e and Wild \cite{more2011estimating}  propose an efficient  and reliable procedure, referred to as  {\tt ECnoise}, for determining the free parameters in this procedure, namely: (i)  the order of differencing $j$; (ii) the number of points $q$ where the function is evaluated; and (iii) the spacing $\delta$ between the sample points; see Section~\ref{discussion}.  {\tt ECnoise}  is inexpensive as it typically requires only between $4-10$ function evaluations to estimate the noise level.  
 It is available at: \url{http://www.mcs.anl.gov/~wild/cnoise/}.  
 
 We should note that if the noise is stochastic one can make use of the Central Limit theorem and estimate the variance by sampling the function a few times (10-20) at a given point. This approach will, however, not work if the noise is deterministic and in that case we need to resort to the technique proposed by Hamming \cite{hamming2012introduction}. In our code we assume that the nature of the noise is not known and therefore estimate the noise using the Hamming table since it is applicable in both cases.

%The implementation of our algorithm relies heavily on the {\tt ECnoise} procedure. 

%%%%%%
\subsection{Specification of the Finite Difference L-BFGS  Method}

We are now ready to  present  the algorithm for the minimization of the function \eqref{objn}.
It invokes two procedures, \texttt{LineSearch} and \texttt{Recovery}, that together produce a new iterate and, if necessary, recompute the estimate of the noise. These two procedures are described in detail after the presentation of the algorithm.  

The algorithm stores the smallest function value obtained during the finite difference gradient computation, \eqref{fd-noise} or \eqref{cd-noise}. Specifically, for forward differences we define
\begin{equation}   \label{fs}
     f_s= \min_{x_i \in S} f(x_i), \quad\mbox{where} \quad  S= \{x_i: x_i= x+ h_{\mbox{\tiny FD}}e_i , \, i=1,\ldots, n\},
\end{equation}
and let $x_s$ denote a point where the minimum is achieved. Since $S$ is a stencil, we refer to $x_s$ as the \emph{best point on the stencil}. 
%As before, we use the acronyms FD and CD to denote forward and central differences, respectively, and we let $t_{\rm ecn}$ denote the number of function evaluations required to compute the noise estimate $\epsilon_f$ 

%%%%%%% ALGORITHM %%%%%%%
\begin{algorithm}[]
\caption{Adaptive Finite Difference L-BFGS (FDLM)}
\label{alg1}
{\bf Inputs:} $f$ (objective function), $x_0$ (initial iterate), 
%$\tau$ (termination tolerance), 
$a_{\rm max}$ (max number of backtracks), 
%$\tt budget$ (max number of function evaluations), 
$k\leftarrow 0$ (iteration counter), $t_{\rm count} \leftarrow 0$ (function evaluation counter), $t_{\rm ls} \leftarrow 0$ (function evaluation counter during {\tt LineSearch} routine), $t_{\rm rec} \leftarrow 0$ (function evaluation counter during {\tt Recovery} procedure), $\zeta \in (0,1)$ (curvature threshold)
\begin{algorithmic}[1]
\State Compute $f_0 = f(x_0)$; set 
%$f_{\rm MA} = 0$, 
$t_{\rm count}=1$
\State Compute an estimate $\epsilon_f$ of the noise  using {\tt ECnoise} \cite{more2011estimating}, at the cost of $t_{\rm ecn}$  function evaluations
\State Update the function evaluation counter: $t_{\rm count}=t_{\rm count}+ t_{\rm ecn}$%\footnotemark
\State Compute $h$ via \eqref{fd-noise} (FD) or \eqref{cd-noise}  (CD)
\State Compute $\nabla_{h} f(x_{0})$ using \eqref{fd-noise} or \eqref{cd-noise}, and store $(x_s,f_s)$
\State  $t_{\rm count}=t_{\rm count}+n$ (FD) or $t_{\rm count}=t_{\rm count}+2n$ (CD)
\While{  a convergence test is not satisfied}
%\WHILE{  $|f_{\rm MA} - f_k| > \tau \max \{1, |f_{\rm MA}| \}\,$  \text{\bf and  }  $t_{\rm count}\leq \tt budget \,$}
\State Compute $d_k = -H_k \nabla_{h} f(x_{k})$ using the L-BFGS approach \cite{mybook}
\State $(x_{+},f_{+},\alpha_k,t_{ls},LS_{\rm flag})=\texttt{LineSearch}(x_{k},f_k,\nabla_{h} f(x_{k}),d_k,a_{\rm max})$
\State  $t_{\rm count}=t_{\rm count}+t_{\rm ls}$
%\If{\textcolor{green}{$LS_{\rm flag}=2$}} \Comment{{\color{red}  Armijo condition satisfied; Wolfe condition not satisfied}}
%\State \textcolor{green}{Estimate the noise $\epsilon_f$ and compute $h$} 
%\State  \textcolor{green}{$t_{\rm count}=t_{\rm count}+t_{\rm ecn}$}
\If{$LS_{\rm flag}=1$} \Comment{\textcolor{blue}{Line search failed}}
\State $(x_{+},f_{+},h,t_{\rm rec})=\texttt{Recovery}(f_k,x_{k},d_k, h,x_s,f_s)$ %\comment{Line search failed}%\footnotemark 
\State  $t_{\rm count}=t_{\rm count}+t_{\rm rec}$
\EndIf
\State $x_{k+1} = x_{+}$ and $f_{k+1}=f_{+}$
\State Compute $\nabla_{h} f(x_{k+1})$ using \eqref{fd-noise} or \eqref{cd-noise}, and store $(x_s,f_s)$
\State Compute curvature pair: $s_{k} = x_{k+1} - x_k$ and $y_{k} = \nabla_{h} f(x_{k+1}) - \nabla_{h} f(x_{k})$
 \State Store  $(s_k, y_k)$ if $s_{k}^Ty_{k}  \geq \zeta \| s_k \| \| y_k \|$
\State  $t_{\rm count}=t_{\rm count}+n$ (FD) or $t_{\rm count}=t_{\rm count}+2n$ (CD)
%\State Update moving average $f_{\rm MA}$; see \eqref{mav}
\State $k=k+1$ 
\EndWhile
\end{algorithmic}
\end{algorithm}
%%%%%%% END OF ALGORITHM

We now discuss the four main components of the algorithm.

\subsubsection{Line Search} The \texttt{LineSearch} function (Line 9 of Algorithm \ref{alg1}) aims to find a steplength $\alpha_k$ that satisfies the  Armijo-Wolfe conditions,
\begin{subequations} 
\begin{align}
	f(x_{k} + \alpha_k d_k) \leq f(x_k) + c_1\alpha_k \nabla_h f(x_k)^T d_k, \qquad\mbox{(Armijo condition)} \label{eq:armijo}\\
	\nabla_{h} f(x_{k} + \alpha_k d_k)^T d_k \geq c_2 \nabla_{h} f(x_{k})^T d_k , \qquad\mbox{(Wolfe condition)} \label{eq:wolfe}
\end{align}
\end{subequations} 
for some constants $0 < c_1 < c_2 < 1$.
In the deterministic setting, when the gradient is exact and the objective function $f$ is bounded below,  there exist intervals of steplengths $\alpha_k$ that satisfy the Armijo-Wolfe conditions \cite[Lemma 3.1]{mybook}. However, when $f$ is noisy, satisfying \eqref{eq:armijo}-\eqref{eq:wolfe} can be problematic. To start, $d_k$ may not be a descent direction for the smooth underlying function $\phi$ in \eqref{objn}, and even if it is, the noise in the objective may cause the line search routine to make incorrect decisions. Therefore, we allow only a small number ($a_{\rm max}$) of line search iterations while attempting to satisfy   \eqref{eq:armijo}-\eqref{eq:wolfe}. We also introduce the following relaxation: if \eqref{eq:armijo}-\eqref{eq:wolfe} is not satisfied at the first trial point of the line search ($\alpha_k=1$), we relax the Armijo condition \eqref{eq:armijo} for subsequent trial values as follows:
\begin{align}
	f(x_{k} + \alpha_k d_k) \leq f(x_k) + c_1\alpha_k \nabla_h f(x_k)^T d_k + 2\epsilon_f. \label{armijo-mod}
\end{align}

There are three possible outcomes of the \texttt{LineSearch} function: (i) a steplength $\alpha_k$ is found that satisfies the  Armijo-Wolfe conditions, where the Armijo condition may have been relaxed as in \eqref{armijo-mod}; (ii) after $a_{\rm max}$ line search iterations, the line search is  able to find a steplength $\alpha_k$ that only satisfies the relaxed Armijo condition \eqref{armijo-mod} but not the Wolfe condition; 
%($LS_{\rm flag}=2$); 
(iii) after $a_{\rm max}$ line search iterations the previous two outcomes are not achieved; we regard this as a line search failure, set $LS_{\rm flag}=1$, and call the \texttt{Recovery} function to determine the cause of the failure and take corrective action. 

%%%%%%%%%%%%%%%%%%%

\subsubsection{Recovery Mechanism} As mentioned above, the \texttt{Recovery} subroutine is the mechanism by which action is taken when the line search  fails to return an acceptable point.  This can occur due to the confusing effect of noisy  function evaluations,  a poor gradient approximation, or high nonlinearity of the objective function (the least likely case). The procedure is described in Algorithm~\ref{recover}.

The input to the \texttt{Recovery} subroutine is the current iterate $x_k$ and function value $f_k$, the search direction $d_k$, the current finite difference interval $h$, and the best point $x_s$ in the finite difference stencil together with the corresponding function value $f_s$. 

The \texttt{Recovery} routine can take three actions:
\begin{enumerate}
\item Return a new estimate of the differencing interval $h$ and leave the current iterate $x_k$ unchanged. In this case, a new noise estimate is computed along the current search direction $d_k$. 
\item Generate a new iterate $x_{k+1}$ without changing the differencing interval $h$. The new iterate is given by a small perturbation of  $x_k$ or by the best point in the stencil $x_s$. 
\item Leave the current iterate unchanged and compute a new estimate of the noise level, along a random direction, and a new finite difference interval $h$.
\end{enumerate}

%\medskip\noindent A pseudo-code for the \texttt{Recovery} subroutine is given in Algorithm \ref{recover}.

%%%%%%% ALGORITHM - LINE SEARCH FAILURE %%%%%%%
\begin{algorithm}[]
\caption{Recovery Routine}
\label{recover}
{\bf Inputs:} $x_k$ (current iterate), $f_k = f(x_k)$ (current function value), $h$ (current finite difference interval), $\gamma_1 \in (0,1)$, $\gamma_2>1$ (finite difference interval acceptance/rejection parameters), $d_k$ (search direction), $(x_s,f_s = f(x_s))$ (best point on the stencil,)
%(finite difference stencil point with lowest function value, and the corresponding function value), 
$t_{\rm rec} \leftarrow 0$ (function evaluation counter for this routine)

%{\bf Outputs:} $x_{+},f_{+},h,t_{\rm rec}$

\begin{algorithmic}[1]
\State Compute new noise estimate ${\epsilon}_f^{d_k}$ \emph{in direction} $d_k$, and new $\bar{h}$ via \eqref{fd-noise} or \eqref{cd-noise}.
\State Update function evaluation counter: $t_{\rm rec}=t_{\rm rec}+f_{\rm ecn}$ 
\If{$\bar{h} < \gamma_1 h$ OR $\bar{h} > \gamma_2 h$}
\State $h = \bar{h}$ and $x_{+} = x_k$, $f_+ = f_k$ \Comment{\textcolor{red}{Case 1}}
%\ELSIF{$\bar{h} > 1.5h$}
%\State $h = \bar{h}$ and $x_{+} = x_k$, $f_+ = f_k$	\Comment{\textcolor{red}{Case 2}}
\Else 
\State $x_{h} = x_k +  h \frac{d_k}{\|d_k \|}$   \Comment\textcolor{blue}{Compute small perturbation of $x_k$}
%\State Let $\bar{\alpha} = \frac{h}{\|d_k \|}$
%\State $x_{+} = x_k + \bar{\alpha}d_k$
\State Compute $f_{h} = f(x_{h})$
\State $t_{\rm rec}=t_{\rm rec}+1$
\If{$x_{h}$ satisfies the Armijo condition \eqref{eq:armijo}} 
\State $x_{+}=x_{h}$, $f_{+} = f_{h}$, $h =  h$	\Comment{\textcolor{red}{Case 2}} 
\Else
%\State $x_{ms} = \arg \min_{s \in S} f(x_s)$
\If{$f_{h} \leq f_s$ AND $f_{h} \leq f_k$}
\State $x_{+} = x_{h}$, $f_{+} = f_{h}$, $h =  h$ \Comment{\textcolor{red}{Case 3}}
\ElsIf{$f_k > f_s \text{   AND   } f_{h} >  f_s$}
\State $x_{+} = x_{s}$, $f_{+} = f_s$	, $h =  h$ \Comment{\textcolor{red}{Case 4}}
\Else
\State $x_{+} = x_k$, $f_{+} = f_{k}$ \Comment{\textcolor{red}{Case 5}}
\State Compute new noise estimate $\epsilon_f^{v_k}$ along \emph{random direction} $v_k$ and new  $\bar{h}$ 
\State $ h= \bar h$
\State $t_{\rm rec}=t_{\rm rec}+f_{\rm ecn}$ 
\EndIf
\EndIf
\EndIf
\end{algorithmic}

{\bf Output:} $x_{+},f_{+},h,t_{\rm rec}$
\end{algorithm}
%%%%%%% END OF ALGORITHM

\medskip
%\st{Let us discuss  the steps of }Algorithm~\ref{recover} \st{ in greater detail.}
To start, the \texttt{Recovery} routine invokes the {\tt ECnoise} procedure to compute a new noise estimate $ \epsilon_f^{d_k}$ along the current search direction $d_k$,  and the corresponding differencing interval $\bar h$ using formula \eqref{fd-noise} or \eqref{cd-noise}. We estimate the noise along $d_k$ because
%, although  {\tt LineSearch}  failed, 
the {\tt Recovery} procedure may compute a new iterate along $d_k$, and it thus seem natural to explore the function in that direction. If the current differencing interval $h$ differs significantly from the new estimate $\bar h$, then we suspect that  our current noise estimate is not reliable, and return $\bar h$ without changing the current iterate (Case 1). This feature is especially important when the noise is multiplicative since in this case the noise level changes over the course of optimization, and the finite difference interval may need to be updated frequently. 
 
 On the other hand (Line 5), if the two differencing intervals, $h$ and $\bar h$, are similar, we regard them as reliable and assume  that the line search procedure failed due to the confusing effects of noise. We must therefore generate a new iterate by other means. We compute a small perturbation of $x_k$, of size $h$, along the current search direction $d_k$ (Line 6); if this point $x_h$ satisfies the Armijo condition \eqref{eq:armijo}, it is accepted, and the procedure terminates on Line 10 (Case 2). Otherwise we make use of the best point $x_s$ on the stencil. If the function value $f_h$ at the trial point is less than both the current function value $f_k$ and $f_s$, then $x_h$ is accepted and the procedure ends on Line 13 (Case 3). Else, if $f_s$ is smaller than both $f_k$ and $f_h$, we let $x_s$
 % = \arg \min_{x_i \in S} f(x_i)$ 
 be the new iterate and terminate on Line 15 (Case~4). These two cases are inspired by the global convergence properties of pattern search methods, and do not require additional function evaluations.
 
 If none of the conditions above are satisfied, then the noise is re-estimated along a random direction ($v_k \in \mathbb{R}^n$) using {\tt ECnoise}, a new differencing interval is computed, and the {\tt Recovery} procedure terminates without changing the iterate (Case 5).  An additional action that could be taken in this last case is to switch to higher-order differences if this point of the algorithm is ever reached, as discussed in Section~\ref{numerical}.
 
 %%%%%%
 \subsubsection{ Hessian approximation} Step 8 of Algorithm~\ref{alg1} computes the L-BFGS  search direction. The inverse Hessian approximation $H_k$  is updated using standard rules \cite{mybook}  based on the  pairs $\{s_j, y_j\}$, where
\begin{align}	\label{eq:curv_pairs}
	s_{k} = x_{k+1} - x_k,\qquad y_{k} = \nabla_{h} f(x_{k+1}) - \nabla_{h} f(x_{k}).
\end{align}
When the line search is able to satisfy the Armijo condition but not the Wolfe condition, there is no guarantee that the product $s_k^Ty_k$  is positive. In this case the pair $(s_k, y_k)$ is discarded if the curvature condition $s_k^Ty_k  \geq \zeta \| s_k \| \| y_k\|$ is not satisfied for some $\zeta \in (0,1)$.

\subsubsection{ Stopping Tests}  A variety of stopping tests have been proposed in the derivative-free optimization literature; see e.g., \cite{larson2013non,conn2009introduction,powell2006newuoa,KoldLewiTorc03,hooke1961direct,kelley2011implicit}. 
Here we discuss two approaches that can be employed in isolation or in combination,  as no single test is best suited for all situations.  

\smallskip
 { \em  Gradient Based Stopping Test. } One could terminate the algorithm as soon as    
\begin{equation} \label{normtest}
      \| \nabla_{h} f(x_{k}) \|_\infty \leq {\tt tol},
\end{equation}
where {\tt tol} is a user-specified parameter. When using forward differences, the best one can hope is for the norm of the gradient approximation to be $\tau (\epsilon_f)^{1/2}$, where $\tau$ depends on the norm of the second derivative. For central differences, the best accuracy is $\bar \tau (\epsilon_f)^{2/3}$, where $\bar \tau$ depends on the norm of the third derivative. %\st{Therefore, the paramter {\tt tol} should not be smaller than  $ (\epsilon_f)^{1/2}$ when using forward differences, or to $ (\epsilon_f)^{2/3}$ when using central differences.}

%The test \eqref{normtest} assumes that we have some confidence in the accuracy of $\nabla_{h} f(x_{k})$. One way to achieve this is to compute the central difference estimate $\nabla_{ h,{\mbox{\tiny CD}}} f(x_k)$ as a backup, and verify that it also satisfies condition \eqref{normtest}.

\smallskip
 {\em  Function Value Based Stopping Test. }One could also terminate the algorithm when
\begin{equation}  \label{ftest}
       |f(x_k) - f(x_{k-1})| \leq \hat \tau \epsilon_f,
\end{equation}
for $\hat \tau >1$. This test is reasonable because the ECnoise procedure that estimates $\epsilon_f$ is scale invariant. 
%\st{ A user-controled stop test could have the form $|f(x_k) - f(x_{k-1})| \leq {\tt tol} \max \{1, |f(x_{k-1})|\}$.}
However, there is a risk that \eqref{ftest} will trigger termination too early, and to address this  one  could employ a moving average. The test can have the form
\begin{align*}
		\frac{ |f_{\rm  MA}(x_k) - f(x_k)|}{|f_{\rm MA}(x_k)|}  \leq {\tt tol}    \quad \text{or} \quad \frac{ |f_{\rm MA}(x_k) - f(x_k)| }{ \max \{1, |f_{\rm MA}(x_k)| \}} \leq {\tt tol},
\end{align*}
where $f_{\rm MA}(x_k)$ is a moving average of  function values, of length $M$,  calculated as follows. Let $f_k = f(x_k)$ and let $F^k = [f_{k-j+1},...,f_{k-1},f_k]$ be the vector formed by the most recent function values, where $j = \min \{k+1,M \}$. We define
	\begin{align}
		f_{\rm MA}(x_k) = \frac{1}{j} \sum_{i=1}^{j} F_i^k.   \label{mav}
	\end{align}
	%\st{Note: We use \textbf{all} function values (even the values for which the iterate does not move).}
An alternative, is to formulate the stop test as
\begin{align*}
		|f_{\rm  MA}(x_k) - f(x_k)| \leq \tau (\epsilon_f)^{1/2} \ \ \mbox{for FD}, \quad \text{or} \quad |f_{\rm MA}(x_k) - f(x_k)| \leq \tau (\epsilon_f)^{2/3} \ \ \mbox{for CD}.
\end{align*}

%%%%%%%%%%%%%%%%%%%%%%%
\subsection{Implementation of the Noise Estimation Procedure}   \label{discussion}

{\tt ECnoise}  has three  parameters that if not appropriately chosen can cause  {\tt ECnoise}  to fail to return a reliable estimate of the noise level: (i) the order of differencing $j$; see \eqref{ef}; (ii) the number of points $q$ used in the noise estimation, and (iii) the spacing $\delta$ between the points.

We have found the strategy proposed by Mor\'e and Wild \cite{more2011estimating} to be effective in deciding the order of differencing $j$. It determines that $j$ is appropriate  if  the values of $\sigma_i$ surrounding $\sigma_j$ are close to each other \emph{and} if there are changes in sign among elements of the $j$-th column of the difference table $T_{i,j}$; the latter 
%is a test 
is a clear indication that the entries of the $j$-th column are due to noise.

Concerning the number $q$ of sample points, we found that the values employed by {\tt ECnoise} \cite{more2011estimating} are reliable, namely  $4-8$ function evaluations for stochastic noise and  $6-10$  for deterministic noise. However, our experience suggests that those settings may be conservative, and in our implementation use only $4$ function evaluations for stochastic noise and $6$ function evaluations for deterministic noise.

 The finite difference approximations \eqref{fd-noise} and \eqref{cd-noise}  require a coarse estimate of the second or third derivatives ($\nu_2$, $\nu_3$), and are often fairly insensitive to these estimates. However, in some difficult cases poor estimates may prevent the algorithm from reaching the desired accuracy. In our implementation we estimate $\nu_2$  using the  heuristic proposed in \cite[Section 5, Algorithm 5.1]{more2012estimating} at the cost of $ 2-4$ function evaluations. If this heuristic fails to provide a reasonable estimate of $\nu_2$,  we employ the finite difference table \eqref{htable} as a back-up to construct a rough approximation of the norm of the second derivative.  In our experiments this back-up mechanism has proved to be adequate. When computing central difference approximations to the gradient, we simply set $\nu_3 \leftarrow \nu_2$, for simplicity.

%%%%%%%%%%%%%%%%%%%%%
\section{Convergence Analysis}
\label{analysis}
%%%%%%%%%%%%%%%%%%%%%
In this section, we present two sets of convergence results for the minimization of the noisy function \eqref{objn} under the assumption that $\phi$ is strongly convex. First, we analyze a method that uses a fixed steplength, and then consider a more sophisticated version that employs a line search to compute the steplength. The main contribution of our analysis is the inclusion of this line search and the fact that
we do not assume that the errors in objective function or gradient go to zero, in any deterministic or  probabilistic
sense; we only assume a bound on these errors.  To focus on these issues, we assume that $H_k=I$, because the proof for a general positive definite
matrix $H_k$ with bounded eigenvalues is essentially the same, but is longer and more
 cluttered as it involves additional constants. Convergence and complexity results for other derivative-free optimization methods can be found in \cite{garmanjani2013smoothing,nesterov2017random,bauschke2015derivative}.

\subsection{Fixed Steplength Analysis}  We consider the method 
\begin{align}	\label{update}
	x_{k+1} = x_k - \alpha g_k ,
\end{align}
where  $g_k$  stands for a finite difference approximation to the gradient, $
      g_k = \nabla_h f(x_k),
$
or some other approximation; our treatment is general.
We define $e(x)$ to be the error in the gradient approximation, i.e.,
\begin{align}	\label{fd_err}
	g_k = \nabla \phi (x_k) + e(x_k).
\end{align}
 We should note the distinction between $e(x)$ and $\epsilon(x)$; the latter denotes the noise in the objective function  \eqref{objn}.

We introduce the following assumptions to establish the first convergence result.
\medskip
%\paragraph{Assumption I}

\begin{enumerate}
	\item[A.1] \textbf{(Strong Convexity of $\phi$)} The function $\phi$ (see \eqref{objn}) is twice continuously differentiable and there  exist positive constants $\mu$ and $L$ such that $\mu I \preceq \nabla^2 \phi(x) \preceq LI$ for all $x \in \mathbb{R}^n$. (We write $\phi^\star = \phi(x^\star)$, where $x^\star$ is the minimizer of $\phi$.)
	\item[A.2] \textbf{(Boundedness of Noise in the Gradient)} There is a constant $\bar{\epsilon}_g >0$ such that 
	\begin{align}  \label{bound-noise}
	 \| e(x) \| \leq \bar{\epsilon}_g \qquad\mbox{ for all} \ x \in \mathbb{R}^n.
	 \end{align}	
\end{enumerate}

Assumption A.2 is satisfied if the approximate gradient is given by forward  or central differences with the  value of $h$ given in \eqref{fd-noise} or \eqref{cd-noise}, provided that the error $\epsilon(x)$ in the evaluation of the  objection function value is bounded;  see \eqref{easy}.

We now establish linear convergence to a neighborhood of the solution.   Afterwards, we comment on the extension of this result to the case when a quasi-Newton iteration of the form $x_{k+1} = x_k - \alpha H_kg_k$ is used. 

\begin{theorem} \label{thm:constant_oc} Suppose that Assumptions A.1-A.2 hold.  Let $\{ x_k\}$ be the iterates generated by  iteration \eqref{update}, where $g_k$ is given by \eqref{fd_err}, and  the steplength satisfies
\begin{align}   \label{al-1_oc}
\alpha \leq 1/L .
\end{align}
Then for all $k$, 
\begin{align}    \label{1state}
\phi(x_{k+1})  - \left[\phi^\star + \frac{ \bar{\epsilon}_g^2}{2 \mu} \right] &\leq  (1 -  \alpha \mu ) \left(\phi(x_k) - \left[ \phi^\star  + \frac{\bar{\epsilon}_g^2 }{2 \mu} \right] \right) ,
\end{align}
where  $  \bar{\epsilon}_g$ is defined in \eqref{bound-noise}.
 \end{theorem}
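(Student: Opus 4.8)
The plan is to run the standard descent-lemma argument for gradient-type methods, but carrying the gradient error $e(x_k)$ as an explicit perturbation throughout, and then absorb the error terms into the shifted potential $\phi(x_k) - \phi^\star - \bar\epsilon_g^2/(2\mu)$ by completing the square. First I would use the upper bound $\nabla^2\phi \preceq LI$ from Assumption A.1 to write the quadratic upper bound
\begin{equation*}
\phi(x_{k+1}) \le \phi(x_k) + \nabla\phi(x_k)^T(x_{k+1}-x_k) + \tfrac{L}{2}\|x_{k+1}-x_k\|^2.
\end{equation*}
Substituting $x_{k+1}-x_k = -\alpha g_k = -\alpha(\nabla\phi(x_k)+e(x_k))$ and using $\alpha \le 1/L$ so that $\tfrac{L}{2}\alpha^2 \le \tfrac{\alpha}{2}$, I would expand and collect terms to get something of the form
\begin{equation*}
\phi(x_{k+1}) \le \phi(x_k) - \tfrac{\alpha}{2}\|\nabla\phi(x_k)\|^2 + \tfrac{\alpha}{2}\|e(x_k)\|^2,
\end{equation*}
where the cross term $-\alpha \nabla\phi(x_k)^Te(x_k)$ together with the leftover $\tfrac{\alpha}{2}\|e(x_k)\|^2$ and part of the $-\alpha\|\nabla\phi(x_k)\|^2$ is handled by the elementary inequality $\nabla\phi^Te \le \tfrac12\|\nabla\phi\|^2 + \tfrac12\|e\|^2$ (i.e. $ab \le \tfrac12 a^2 + \tfrac12 b^2$). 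Then Assumption A.2 gives $\|e(x_k)\|^2 \le \bar\epsilon_g^2$.

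Next I would invoke the Polyak–Łojasiewicz inequality implied by strong convexity, namely $\|\nabla\phi(x_k)\|^2 \ge 2\mu(\phi(x_k)-\phi^\star)$, which follows from minimizing both sides of the strong-convexity lower bound $\phi(y) \ge \phi(x_k) + \nabla\phi(x_k)^T(y-x_k) + \tfrac{\mu}{2}\|y-x_k\|^2$ over $y$. Applying this to the $-\tfrac{\alpha}{2}\|\nabla\phi(x_k)\|^2$ term yields
\begin{equation*}
\phi(x_{k+1}) - \phi^\star \le (1-\alpha\mu)(\phi(x_k)-\phi^\star) + \tfrac{\alpha}{2}\bar\epsilon_g^2.
\end{equation*}
Finally I would subtract $\bar\epsilon_g^2/(2\mu)$ from both sides and check the algebraic identity that $\tfrac{\alpha}{2}\bar\epsilon_g^2 - \tfrac{\bar\epsilon_g^2}{2\mu} = -(1-\alpha\mu)\tfrac{\bar\epsilon_g^2}{2\mu}$, which is exactly what is needed to fold the constant into the contraction factor and produce \eqref{1state}.

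The argument is essentially routine; the only place requiring a little care is the bookkeeping of the cross term between $\nabla\phi(x_k)$ and $e(x_k)$ — one must split the $\|\nabla\phi(x_k)\|^2$ coefficient so that enough of it survives to give the $(1-\alpha\mu)$ factor via the PL inequality while the rest cancels the cross term, and simultaneously make sure the constant $\bar\epsilon_g^2$ comes out with coefficient exactly $\alpha/2$ so the final shift works cleanly. Also one should note that $1-\alpha\mu \in [0,1)$ since $\alpha \le 1/L \le 1/\mu$, so the recursion is a genuine contraction toward the noise-dependent neighborhood. The extension to $H_k = I$ replaced by a general positive definite $H_k$ with eigenvalues bounded in $[\lambda_{\min},\lambda_{\max}]$ proceeds identically, with $\alpha$ effectively rescaled by those eigenvalues and the constants adjusted accordingly, as the authors note.
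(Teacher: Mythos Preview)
Your proposal is correct and follows essentially the same route as the paper's proof: descent lemma from $L$-smoothness, bounding the cross term via $ab \le \tfrac12 a^2 + \tfrac12 b^2$ to reach $\phi(x_{k+1}) \le \phi(x_k) - \tfrac{\alpha}{2}\|\nabla\phi(x_k)\|^2 + \tfrac{\alpha}{2}\|e(x_k)\|^2$, then the PL inequality from strong convexity and the shift by $\bar\epsilon_g^2/(2\mu)$. One minor simplification worth noting: if you apply $\tfrac{L\alpha^2}{2} \le \tfrac{\alpha}{2}$ to the full quadratic term $\tfrac{L\alpha^2}{2}\|g_k\|^2$ \emph{before} expanding $g_k = \nabla\phi(x_k)+e(x_k)$, the two cross terms $-\alpha\nabla\phi(x_k)^Te(x_k)$ and $+\alpha\nabla\phi(x_k)^Te(x_k)$ cancel exactly, so Young's inequality is not even needed there.
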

 
 %%%%%%%%%%%%%%%%
\begin{proof} 
Since $\phi$ satisfies Assumption~A.1, we have by \eqref{fd_err},
\begin{align}
	\phi(x_{k+1}) &\leq \phi(x_k) - \alpha \nabla \phi(x_k)^T  g_k + \frac{\alpha^2  L}{2}\| g_k \|^2 \nonumber\\
		&= \phi(x_k) - \alpha  \nabla \phi(x_k)^T  \left(\nabla \phi(x_k) + e(x_k) \right) + \frac{\alpha^2  L}{2}\| \nabla \phi(x_k) + e(x_k) \|^2 \nonumber\\
%		& = \phi(x_k) - \alpha \| \nabla \phi(x_k)\|^2 - \alpha \nabla \phi(x_k)^T e(x_k) + \frac{\alpha^2  L}{2}\left(\| \nabla \phi(x_k)\|^2 + 2\nabla \phi(x_k)^Te(x_k) + \|e(x_k) \|^2 \right) \nonumber\\
		& = \phi(x_k) - \alpha \left(1 - \frac{\alpha  L}{2} \right) \| \nabla \phi(x_k)\|^2 - \alpha(1 - {\alpha  L})\nabla \phi(x_k)^Te(x_k) + \frac{\alpha^2  L}{2}\|e(x_k) \|^2\nonumber\\
		& \leq \phi(x_k) - \alpha \left(1 - \frac{\alpha  L}{2} \right) \| \nabla \phi(x_k)\|^2 + \alpha(1 - {\alpha  L}) \| \nabla \phi(x_k)\| \|e(x_k)\| + \frac{\alpha^2  L}{2}\|e(x_k) \|^2 .\nonumber  \\
                  &\leq \phi(x_k) - \alpha \left(1 - \frac{\alpha  L}{2} \right) \| \nabla \phi(x_k)\|^2 + \alpha(1 - {\alpha  L}) \left[\frac{1}{2}\| \nabla \phi(x_k)\|^2  +  \frac{1}{2} \|e(x_k) \|^2\right]  + \frac{\alpha^2  L}{2}\|e(x_k) \|^2 , \nonumber
%                &\leq  \phi(x_k) - \alpha \left[1 - \frac{\alpha  L}{2} -(1 - {\alpha  L}) \frac{\tau}{4} \right] \| \nabla \phi(x_k)\|^2   + \left[ \frac{\alpha(1-\alpha L)}{\tau}  + \frac{\alpha^2  L}{2} \right] \|e(x_k) \|^2 \label{gradrec} 
\end{align}
where the last inequality follows from  the fact that $(\frac{1}{\sqrt{2}}  \| \nabla \phi(x_k)\| - \frac{1}{\sqrt{2}} \|e(x_k) \| )^2 \geq 0$ and the assumption  $\alpha L <1$.  Simplifying this expression, we have, for all $k$,  
\begin{align}
\phi(x_{k+1}) &\leq  \phi(x_k) -  \frac{\alpha  }{2}  \| \nabla \phi(x_k)\|^2     + \frac{\alpha }{2}  \|e(x_k) \|^2 . \label{gradrec2} 
\end{align}

Since $\phi$ is $\mu$-strongly convex, we can use the following relationship between the norm of the gradient squared, and the distance of the $k$-th iterate from the optimal solution,
\begin{align} \label{strongly}
     \| \nabla \phi(x_k) \|^2 \geq 2 \mu (\phi(x_k) - \phi^\star),
\end{align}
which together with \eqref{gradrec2} yields
\begin{align*}		
	\phi(x_{k+1})  
		&\leq  \phi(x_k) -  \alpha \mu  (\phi(x_k) - \phi^\star)   +  \frac{\alpha }{2}  \|e(x_k) \|^2 ,
\end{align*}
and by (\ref{bound-noise}),
\begin{align*}		
	\phi(x_{k+1})  - \phi^\star
		&\leq  (1 -  \alpha \mu ) (\phi(x_k) - \phi^\star)   +  \frac{\alpha }{2}  \bar{\epsilon}_g^2 .
\end{align*}
Hence,
\begin{align*}		
	\phi(x_{k+1})  - \phi^\star -\frac{ \bar{\epsilon}_g^2}{2 \mu} 
		&\leq  (1 -  \alpha \mu ) (\phi(x_k) - \phi^\star)   +  \frac{\alpha }{2}  \bar{\epsilon}_g^2  -\frac{ \bar{\epsilon}_g^2}{2 \mu}   \\  
		&=  (1 -  \alpha \mu ) (\phi(x_k) - \phi^\star)   + (\alpha \mu -1)   \frac{ \bar{\epsilon}_g^2}{2 \mu}    \\
                & =  (1 -  \alpha \mu ) \left(\phi(x_k) - \phi^\star  -\frac{\bar{\epsilon}_g^2 }{2 \mu} \right).
\end{align*}  
\end{proof}

We interpret the term $\left[ \phi^\star + \frac{ \bar{\epsilon}_g^2}{2 \mu} \right] $ in \eqref{1state} as the best value of the objective that can be achieved in the presence of noise. Theorem~\ref{thm:constant_oc} therefore establishes a $Q$-linear rate of convergence of $\{ \phi(x_k)\}$ to that value.

Another way of stating the convergence result embodied in Theorem~\ref{thm:constant_oc} is by applying the recursion to \eqref{1state}, i.e.,
\begin{align*}    
\phi(x_{k})  - \left[\phi^\star + \frac{ \bar{\epsilon}_g^2}{2 \mu} \right] &\leq  (1 -  \alpha \mu )^k \left(\phi(x_0) - \left[ \phi^\star  + \frac{\bar{\epsilon}_g^2 }{2 \mu} \right] \right), 
\end{align*}
so that 
\begin{align}    \label{rlinear}
\phi(x_{k})  - \phi^\star  &\leq  (1 -  \alpha \mu )^k \left(\phi(x_0) - \left[ \phi^\star  +\frac{\bar{\epsilon}_g^2 }{2 \mu} \right] \right)   + \frac{ \bar{\epsilon}_g^2}{2 \mu}.
\end{align}
This convergence result has a  similar flavor to that presented in \cite{nedic2001convergence} for the incremental gradient method using a fixed steplength; see also \cite[Section 4]{bottou2017optimization}. 
Note that \eqref{rlinear} is an $R$-linear convergence result and as such is weaker than \eqref{1state}.

It is possible to prove a similar result to Theorem~\ref{thm:constant_oc} for a general positive definite $H_k$, assuming bounds on $\|H_k\|$ and $\|H_k^{-1}\|$, as would be the case with a limited memory BFGS update. However, the provable convergence rate in this case would be closer to $1$, and the provable asymptotic objective value would be no smaller than the value in \eqref{1state}.  This is similar to the situation in optimization without noise, where the provable convergence rate for L-BFGS is no better than for steepest descent, even though L-BFGS is superior in practice. Since this more general analysis does not provide additional insights on the main topic of this paper, we have not included them here.

\subsection{Line Search Analysis}  
In  the literature of optimization of noisy functions, a number of convergence results have been established for algorithms that employ a fixed steplength strategy \cite{bertsekas2015convex,nedic2001convergence,bottou2017optimization}, but there has been little analysis of methods  that use a line search.

In this section, we present a convergence result for the iteration
\begin{align}  \label{algk}
    x_{k+1} = x_k - \alpha_k  g_k,
\end{align}
where the steplength $\alpha_k$ is computed by a backtracking  line search governed by the relaxed Armijo condition
\begin{align}	\label{armijo}
	f(x_k - \alpha_k  g_k) \leq f(x_k) - c_1 \alpha_k  g_k^T  g_k + 2 \epsilon_{\rm A}.
\end{align}
Here $c_1 \in (0,1)$  and $\epsilon_{\rm A}>0$ is a user specified  parameter whose choice is discussed later on.
 If a trial value $\alpha_k$ does not satisfy \eqref{armijo}, the new value is set to a (fixed) fraction $\tau <1$ of the previous value, i.e., $\alpha_k \leftarrow \tau \alpha_k$.

 Our analysis relies on the following additional assumption on the error in the objective function.
 
 \smallskip
\begin{enumerate}
	\item[A.3] \textbf{(Boundedness of Noise in the Function)} There is a constant $\bar{\epsilon}_f>0$ such that 
	\begin{align}		\label{bound-noise-f}
	| f(x) - \phi(x) |= |\epsilon(x)|  \leq \bar{\epsilon}_f \qquad\mbox{ for all} \ x \in \mathbb{R}^n.
	\end{align}
\end{enumerate}
The following result establishes linear convergence  to a neighborhood of the solution.

%%%%%%%%%%%%%%%%
\begin{theorem} \label{thm:armijo} Suppose that Assumptions A.1-A.3 hold.  Let $\{ x_k\}$ be the iterates generated by iteration \eqref{algk}, where $g_k$ is given by \eqref{fd_err} and the step length $\alpha_k$  is the maximum value in $\{\tau^{-j}; j=0,1, \dots\}$ satisfying the relaxed Armijo condition \eqref{armijo} with $\epsilon_{ \rm A} > \bar{\epsilon}_f$ and $0<c_1< 1/2$.
Then, for any  $\beta \in \Big(0,\frac{1-2c_1}{1+2c_1}\Big]$, we have that 
% for all $k$, 
\begin{align}    \label{east}
\phi(x_{k+1}) - \left[\phi^\star + \bar{\eta}  \right] \leq \rho \left(\phi(x_k) - \left[\phi^\star +  \bar{ \eta} \right]\right) ,\quad k=0,1, \ldots,
\end{align}
where 
\begin{align}   \label{rho-eta}
   \rho = 1 - \frac{2 \mu c_1 \tau  (1-\beta)^2}{L}  , \quad \bar{\eta} = \frac{1}{2 \mu \beta^2} \bar{\epsilon}_g^2  +\frac{L}{ \mu c_1 \tau  (1-\beta)^2}\left( \epsilon_{ \rm A} + \bar{\epsilon}_f \right),
\end{align}
 and $\,  \bar{\epsilon}_g$ and $\bar{\epsilon}_f$ are defined in \eqref{bound-noise} and \eqref{bound-noise-f}. Additionally, if $c_1 < 1/4$, we can choose $\beta = 1- 4c_1$, in which case,
\begin{align}   \label{rho-eta-simple}
   \rho = 1 - \frac{32 \mu \tau  c_1^3}{L}  , \quad \bar{\eta} = \frac{1}{2 \mu (1-4c_1)^2} \bar{\epsilon}_g^2  +\frac{L}{16 \mu  \tau  c_1^3}\left( \epsilon_{ \rm A} + \bar{\epsilon}_f \right) .
\end{align}
 \end{theorem}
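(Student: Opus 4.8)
The plan is to reduce the claimed estimate \eqref{east} to a one-step recursion of the form $\phi(x_{k+1}) \le (1-a)(\phi(x_k)-\phi^\star) + b$ with $a = \tfrac{2\mu c_1\tau(1-\beta)^2}{L}$ and $b = 2(\epsilon_{\rm A}+\bar\epsilon_f)$, and then to rewrite this recursion in centered form: $\rho = 1-a$, and the offset $\bar\eta$ in \eqref{rho-eta} is precisely the quantity for which $\bar\eta - b/a = \tfrac{\bar\epsilon_g^2}{2\mu\beta^2} \ge 0$. Establishing the one-step recursion requires a case split on the size of the true gradient: \textbf{Case 1}, $\|\nabla\phi(x_k)\| \ge \bar\epsilon_g/\beta$ (the gradient dominates the gradient error), and \textbf{Case 2}, $\|\nabla\phi(x_k)\| < \bar\epsilon_g/\beta$ (the iterate already lies in the noise-dominated neighborhood).

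The central step — and the one I expect to be the main obstacle — is a uniform lower bound on the backtracking steplength in Case 1. First I would pass the line search test from $f$ to $\phi$: by A.3, $f(x_k-\alpha g_k) - f(x_k) \le \phi(x_k-\alpha g_k) - \phi(x_k) + 2\bar\epsilon_f$, so since $\epsilon_{\rm A} > \bar\epsilon_f$ the relaxed Armijo condition \eqref{armijo} is implied by $\phi(x_k-\alpha g_k) \le \phi(x_k) - c_1\alpha\|g_k\|^2$. Using the descent-lemma bound $\phi(x_k-\alpha g_k) \le \phi(x_k) - \alpha\nabla\phi(x_k)^Tg_k + \tfrac{L\alpha^2}{2}\|g_k\|^2$ from A.1 together with the exact identity $\nabla\phi(x_k)^Tg_k = \tfrac12\big(\|g_k\|^2 + \|\nabla\phi(x_k)\|^2 - \|e(x_k)\|^2\big)$ implied by \eqref{fd_err}, this reduces (after dividing by $\alpha$) to $\big(\tfrac{L\alpha}{2} + c_1 - \tfrac12\big)\|g_k\|^2 \le \tfrac12\|\nabla\phi(x_k)\|^2 - \tfrac12\|e(x_k)\|^2$. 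For $\alpha \le 1/L$ the coefficient on the left is at most $c_1$; if it is nonpositive the left side is $\le 0$ and we are done, and otherwise I would bound the left side by $c_1(1+\beta)^2\|\nabla\phi(x_k)\|^2$ (using $\|g_k\| \le \|\nabla\phi(x_k)\| + \bar\epsilon_g \le (1+\beta)\|\nabla\phi(x_k)\|$ in Case 1) and the right side below by $\tfrac12(1-\beta^2)\|\nabla\phi(x_k)\|^2$ (using $\|e(x_k)\| \le \bar\epsilon_g \le \beta\|\nabla\phi(x_k)\|$). The surviving requirement $c_1(1+\beta)^2 \le \tfrac12(1-\beta^2)$ is exactly $\beta(1+2c_1) \le 1-2c_1$, i.e.\ $\beta \le \tfrac{1-2c_1}{1+2c_1}$ — this is where the hypothesis on $\beta$ is used. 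Hence \eqref{armijo} holds for every $\alpha \le 1/L$ in Case 1, so the backtracking line search from $\alpha=1$ with factor $\tau$ returns $\alpha_k \ge \tau/L$. (I would also record that the line search is always well defined: a sufficiently small step satisfies \eqref{armijo} because $\phi$ is continuous and $\epsilon_{\rm A}>\bar\epsilon_f$.)

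With the steplength bound in hand, in Case 1 the accepted step satisfies \eqref{armijo}, so A.3 gives $\phi(x_{k+1}) \le \phi(x_k) - c_1\alpha_k\|g_k\|^2 + 2(\epsilon_{\rm A}+\bar\epsilon_f)$; substituting $\alpha_k \ge \tau/L$, the Case 1 inequality $\|g_k\|^2 \ge (1-\beta)^2\|\nabla\phi(x_k)\|^2$, and the strong-convexity bound \eqref{strongly} produces the one-step recursion with the $a,b$ above. Rearranging it and using $b/a = \bar\eta - \tfrac{\bar\epsilon_g^2}{2\mu\beta^2} \le \bar\eta$ yields \eqref{east}. In Case 2, \eqref{strongly} gives $\phi(x_k)-\phi^\star \le \tfrac{1}{2\mu}\|\nabla\phi(x_k)\|^2 < \tfrac{\bar\epsilon_g^2}{2\mu\beta^2}$, so $x_k$ is already within $\bar\eta$ of optimal; here only the crude consequence $\phi(x_{k+1}) \le \phi(x_k) + 2(\epsilon_{\rm A}+\bar\epsilon_f) = \phi(x_k)+b$ of \eqref{armijo} is needed, and since $\phi(x_k)-\phi^\star < \bar\eta - b/a$ a short computation gives $\phi(x_{k+1})-\phi^\star \le \rho(\phi(x_k)-\phi^\star) + a\bar\eta$, which is exactly \eqref{east}; the definition of $\bar\eta$ is tuned so that this holds precisely on the region Case 2 describes. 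Finally, the ``additionally'' clause follows by setting $\beta = 1-4c_1$ in $\rho$ and $\bar\eta$ and checking that $0 < 1-4c_1 \le \tfrac{1-2c_1}{1+2c_1}$ when $c_1 < 1/4$ (the right inequality reduces to $-8c_1^2 \le 0$). The technical heart is the Case 1 steplength estimate and the bookkeeping that lines $a$ and $b$ up with the stated $\rho$ and $\bar\eta$; everything else is routine algebra.
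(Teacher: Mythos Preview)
Your proposal is correct and follows essentially the same route as the paper: the same case split (gradient large versus gradient small relative to the error), the same descent-lemma argument to show the relaxed Armijo condition holds for all $\alpha\le 1/L$ in Case~1 (yielding $\alpha_k>\tau/L$), and the same crude bound $\phi(x_{k+1})\le\phi(x_k)+2(\epsilon_{\rm A}+\bar\epsilon_f)$ in Case~2. The only cosmetic difference is that the paper merges both cases into a single recursion $\phi(x_{k+1})\le\phi(x_k)-\tfrac{c_1\tau(1-\beta)^2}{L}\|\nabla\phi(x_k)\|^2+\eta$ (via an add--subtract trick in Case~2) before centering and setting $\bar\eta=\eta/(1-\rho)$, whereas you verify the centered inequality \eqref{east} directly in each case using the identity $\bar\eta-b/a=\bar\epsilon_g^2/(2\mu\beta^2)$; the algebra is equivalent.
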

 
 \begin{proof}
%We first show that the line search always finds a value of $\alpha$ such that \eqref{armijo} is satisfied. 
By equation \eqref{gradrec2} in the proof of Theorem \ref{thm:constant_oc}, if $\alpha \leq 1/L$, we have
\begin{equation}
\phi(x_{k}- \alpha g_k) \leq  \phi(x_k) -  \frac{\alpha  }{2}  \| \nabla \phi(x_k)\|^2     + \frac{\alpha }{2}  \|e(x_k) \|^2  ,
\end{equation}
which given assumption A.3 implies
\begin{equation}
f(x_{k} -\alpha g_k) \leq  f(x_k) -  \frac{\alpha  }{2}  \left( \| \nabla \phi(x_k)\|^2     -  \|e(x_k) \|^2 \right)  + 2 \bar{\epsilon}_f    \label{gradrec3} .
\end{equation}
Since we assume $\epsilon_{ \rm A} > \bar{\epsilon}_f$, it is clear from comparing \eqref{armijo} and \eqref{gradrec3}  that \eqref{armijo} will be satisfied for sufficiently small $\alpha$.
Thus, we have shown that the line search always finds a value of $\alpha_k$ such that \eqref{armijo} is satisfied.

In addition, we need to ensure that $\alpha_k$ is not too small when the iterates are far from $x^\star$.
To this end we define
\begin{equation}  \label{betadef}
\beta_k = \frac{\|e(x_k)\|}{\| \nabla \phi(x_k)\|},
\end{equation}
which together with \eqref{fd_err} gives
\begin{equation}
(1-\beta_k) \| \nabla \phi(x_k)\| \leq    \|g_k\| \leq (1+\beta_k) \| \nabla \phi(x_k)\| .  \label{gradbounds}
\end{equation}

%from which the following inequalities follow:
%\begin{gather}
%  \| \nabla \phi(x_k) \| - \| e(x_k) \| \leq   \|g_k\| \leq  \| \nabla \phi(x_k) \| + \| e(x_k) \| \nonumber \\
% \| \nabla \phi(x_k)\|  - \beta_k \| \nabla \phi(x_k) \| \leq \|g_k\|    \leq \| \nabla \phi(x_k)\|  + \beta_k \| \nabla \phi(x_k) \|  \nonumber \\
%(1-\beta_k) \| \nabla \phi(x_k)\| \leq    \|g_k\| \leq (1+\beta_k) \| \nabla \phi(x_k)\| .  \label{gradbounds}
%\end{gather}
Now, using  \eqref{betadef} and \eqref{gradbounds} in \eqref{gradrec3} we have
\begin{align*}
f(x_{k} -\alpha g_k) & \leq  f(x_k) -  \frac{\alpha  }{2}  (1-\beta_k^2)  \| \nabla \phi(x_k)\|^2    + 2 \bar{\epsilon}_f  \\
                              &\leq f(x_k) -  \frac{\alpha (1-\beta_k^2) }{2 (1+\beta_k)^2}   \| g_k \|^2    + 2 \bar{\epsilon}_f \\
                              &= f(x_k) -  \frac{\alpha (1-\beta_k) }{2 (1+\beta_k)}   \| g_k \|^2    + 2 \bar{\epsilon}_f.
\end{align*}
Since we assume that $\epsilon_{\rm A} >  \bar{\epsilon}_f$ and  $c_1< 1/2$, it is then clear that the Armijo condition \eqref{armijo} is satisfied for any $\alpha \leq 1/L$ if 
$ (1-\beta_k) /(1+\beta_k) \geq 2c_1$.  This is equivalent to requiring that
\begin{equation} \label{beta}
\beta_k \leq \beta \leq  \frac{1-2c_1}{1+2c_1} < 1 ,
\end{equation}
where $\beta$ is an arbitrary positive constant satisfying (\ref{beta}). 

\medskip\noindent
\textbf{Case 1}.
We now select such a value $\beta$ 
and refer to iterates such that $\beta_k \leq \beta$ as Case 1 iterates. 
Thus, for these iterates any $\alpha \leq 1/L$ satisfies the relaxed Armijo condition \eqref{armijo}, and since we find $\alpha_k$ using a constant  backtracking factor of $\tau<1$, we have that 
%for such $k$, 
$\alpha_k > \tau/L$. Therefore, using Assumption~A.3 and (\ref{gradbounds}) we have
%
%We now measure the reduction in $\phi$ for iterates $x_k$ in Case~1, given  the steplength $\alpha_k > \tau/L$.   Since the Armijo condition \eqref{armijo} is satisfied, using Assumption~A.3 and (\ref{gradbounds}) we have
\begin{align}
	\phi(x_k - \alpha_k g_k) &\leq \phi(x_k) - c_1 \alpha_k \| g_k\|^2 + 2\epsilon_{\rm A}+ 2\bar{\epsilon}_f \nonumber\\
	& \leq \phi(x_k) - \frac{c_1 \tau (1-\beta)^2}{L} \| \nabla \phi(x_k)\|^2 + 2\epsilon_{\rm A}+ 2\bar{\epsilon}_f.    \label{phi_prog_c1} 
\end{align}
Expression \eqref{phi_prog_c1} measures the reduction in $\phi$ at iterates belonging to Case 1.

%%%%%%%%%%%%%%%%%%%%%%%%%%%%%%%

\medskip\noindent
\textbf{Case 2}. We now consider iterates that do not satisfy the conditions of Case 1, namely,  iterates for which $\beta_k >\beta$, or equivalently,
\begin{align}	\label{case2}
	\|e(x_k) \| > \beta \|\nabla \phi(x_k)\|.
\end{align}

We have shown that the relaxed  Armijo condition \eqref{armijo} is satisfied at every iteration of the algorithm.
 Using as before Assumption~A.3, we deduce from \eqref{armijo} that
\begin{align}
	\phi(x_k - \alpha_k g_k) & \leq \phi(x_k) - c_1 \alpha_k \| g_k\|^2 +  2\epsilon_{\rm A} + 2 \bar{\epsilon}_f \nonumber\\
	 & \leq \phi(x_k)  + 2 \epsilon_{ \rm A} + 2 \bar{\epsilon}_f. \label{rec_case2}
\end{align}
We now add and subtract  $c_1( \tau/L) (1-\beta)^2   \| \nabla \phi(x_k)\|^2$ from the right hand side of this relation and recall \eqref{case2}, to obtain
\begin{align}
	\phi(x_k - \alpha_k g_k) & \leq \phi(x_k) -  \frac{c_1 \tau (1-\beta)^2 }{L} \|\nabla \phi(x_k)\|^2   +\frac{c_1 \tau (1-\beta)^2}{L} \| \nabla \phi(x_k)\|^2+ 2\epsilon_{\rm A} +2 \bar{\epsilon}_f \nonumber\\
	& \leq \phi(x_k) -  \frac{c_1 \tau (1-\beta)^2}{L} \| \nabla \phi(x_k)\|^2  +\frac{c_1 \tau (1-\beta)^2 }{L\beta^2} \|e(x_k)\|^2  + 2\epsilon_{ \rm A} +2 \bar{\epsilon}_f  \nonumber\\
	& \leq \phi(x_k) -  \frac{c_1 \tau (1-\beta)^2}{L} \| \nabla \phi(x_k)\|^2  +\frac{c_1 \tau (1-\beta)^2 }{L\beta^2} \bar{\epsilon}_g^2  + 2\epsilon_{ \rm A} +2 \bar{\epsilon}_f  \nonumber\\
	& = \phi(x_k) - \frac{c_1 \tau (1-\beta)^2}{L}  \| \nabla \phi(x_k)\|^2 + \eta \label{phi_prog_c2} ,
\end{align}
where 
\begin{align}	\label{eta_neigh}
\eta =\frac{c_1 \tau (1-\beta)^2 }{L\beta^2} \bar{\epsilon}_g^2  + 2\epsilon_{\rm A} +2 \bar{\epsilon}_f. 
\end{align}
Equation \eqref{phi_prog_c2} establishes a recursion in $\phi$ for the iterates in Case 2. 

\smallskip
Now we combine the results from the two cases: \eqref{phi_prog_c1} and  \eqref{phi_prog_c2}. Since the term multiplying $\| \nabla \phi(x_k)\|^2$ is the same in the two cases, and since $\eta \geq 2\epsilon_{ \rm A} + 2 \bar{\epsilon}_f$, we have that for all $k$
\begin{align}	\label{rec}
\phi(x_{k+1}) \leq \phi(x_k) - \frac{c_1 \tau  (1-\beta)^2}{L} \| \nabla \phi(x_k)\|^2 + \eta.
\end{align}
 Subtracting $\phi^\star$ from both sides of \eqref{rec}, and using the strong convexity condition \eqref{strongly}, gives
\begin{align}	\label{rhodef}
\phi(x_{k+1}) - \phi^\star \leq \underbrace{\left(1 - \frac{2 \mu c_1 \tau  (1-\beta)^2}{L}  \right)}_{\text{$\rho$}}(\phi(x_k) - \phi^\star)  + \eta. 
\end{align}
Clearly $0<\rho<1$, since the quantities $2c_1,\mu/L, \tau$ and $1-\beta$ are all less than one. We have thus shown that for all $k$
%Now for all $k$ we have 
\begin{align}
\phi(x_{k+1}) - \phi^\star \leq \rho(\phi(x_k) - \phi^\star)  + \eta. 
\end{align}
Subtracting $\eta/(1- \rho)$ from both sides, it follows that
\begin{align*}
	\phi(x_{k+1}) - \phi^\star - \frac{\eta}{1- \rho} &\leq \rho(\phi(x_k) - \phi^\star)  + \eta  - \frac{\eta}{1- \rho}\nonumber \\
%	& = \rho(\phi(x_k) - \phi^\star)  +  \frac{\eta(\alpha', \beta, \tilde{\epsilon}, \theta, \Delta_{\max})(1- \rho) - \eta(\alpha', \beta, \tilde{\epsilon}, \theta, \Delta_{\max})}{1- \rho} \nonumber \\
	& = \rho(\phi(x_k) - \phi^\star)  -  \frac{\rho \eta}{1- \rho} \nonumber \\
	& = \rho \left(\phi(x_k) - \phi^\star -  \frac{ \eta}{1- \rho} \right)   ,
	\end{align*}
	and thus
	\begin{equation} \label{nubardecrease} 
\phi(x_{k+1}) - \phi^\star - \bar{\eta} \leq \rho \left(\phi(x_k) - \phi^\star -  \bar{ \eta} \right) , \quad k=0,1,\ldots, 
\end{equation}
where $\bar{\eta}=\eta /(1-\rho)$. From \eqref{eta_neigh}, \eqref{rhodef}, we have that
\begin{align}
\bar{\eta}  
%&= \frac{\eta}{1-\rho}   \nonumber \\
	&=  \frac{L}{2 \mu c_1 \tau  (1-\beta)^2} \left( \frac{c_1 \tau (1-\beta)^2 }{L\beta^2} \bar{\epsilon}_g^2  + 2\epsilon_{ \rm A} +2 \bar{\epsilon}_f \right)       \nonumber \\
              & = \frac{1}{2 \mu \beta^2} \bar{\epsilon}_g^2  +\frac{L}{ \mu c_1 \tau  (1-\beta)^2}\left( \epsilon_{ \rm A} + \bar{\epsilon}_f \right)  .      \label{nubarvalue}
\end{align}
This establishes \eqref{east} and \eqref{rho-eta}.

We can obtain simpler expressions for $\rho$ and $\bar \eta$ by making a particular selection of $\beta$. Recall that this parameter is required to satisfy \eqref{beta} so that \eqref{nubardecrease} holds for all $k$. In the case when $c_1 \in (0, 1/4]$, the choice $\beta = 1-4c_1$ will satisfy \eqref{beta} since
\begin{align*}
1-4c_1-\frac{1-2c_1}{1+2c_1} = \frac{1-4c_1 +2c_1 -8c_1^2 -(1-2c_1)}{1+2c_1} <0.
\end{align*}
 Substituting this value of $\beta$ in the definition of $\rho$ (see \eqref{rhodef})  and in  \eqref{nubarvalue} gives
\begin{align*}
\rho=1-\frac{2 \mu c_1 \tau  (1-\beta)^2}{L} 
%=1-\frac{2 \mu c_1 \tau  (4c_1)^2}{L}
   =1-\frac{32 \mu  \tau  c_1^3}{L},
\end{align*}
and 
\begin{align*}
\bar{\eta}  &=\frac{1}{2 \mu \beta^2} \bar{\epsilon}_g^2  +\frac{L}{\mu c_1 \tau  (1-\beta)^2}\left( \epsilon_{ \rm A} + \bar{\epsilon}_f \right) 
 %          &=\frac{1}{2 \mu (1-4c_1)^2} \bar{\epsilon}_g^2  +\frac{L}{ \mu c_1 \tau  (4c_1)^2}\left( \epsilon_{ \rm A} + \bar{\epsilon}_f \right)  \\
   =\frac{1}{2 \mu (1-4c_1)^2} \bar{\epsilon}_g^2  +\frac{L}{ \mu  \tau  16c_1^3}\left( \epsilon_{ \rm A} + \bar{\epsilon}_f \right),
\end{align*}
which gives \eqref{rho-eta-simple}.
\end{proof}

%%%%%%%%%%%%%%%%%%%%%%%%%%%%%%%%%%%%%%%

As with \eqref{rlinear} there is a different way of stating the convergence result. Applying \eqref{nubardecrease} recursively and then moving the constant term to the right-hand-side of the expression yields
\begin{align*}
\phi(x_{k}) - \phi^\star \leq \rho^k \left(\phi(x_0) - \left[\phi^\star +  \bar{\eta} \right]\right) + \bar{\eta}.
\end{align*}

Application of Theorem~\ref{thm:armijo} requires a choice of the parameter $\beta$ that affects the convergence constant $\rho$ and the level of accuracy $\bar \eta$.  One of the most intuitive  expressions we could find was \eqref{rho-eta-simple}, which was obtained by assuming that $c_1<1/4$ and choosing $\beta= 1-4c_1$. This value is reasonable provided $c_1$ is not too small. There are other ways of choosing $\beta$ when $c_1$ is very small; in general the theorem is stronger if $\beta$ is not chosen close to zero.

In the case where  $\nabla \phi(x)$ is estimated by a forward difference approximation we can further distill our error estimate, since $\bar{\epsilon}_f$ is the source of all noise. Using A.3, it is easy to show that
\begin{align}  
	 \| e(x_k) \| =  \|g_k - \nabla \phi(x_k)\| & \leq  \, Lh/2+ 2\bar{\epsilon}_f /h   \nonumber \\
              & =  \, 2 \sqrt{L\bar{\epsilon}_f},  \label{easy}
\end{align}
where the equality follows by substituting $h=2\sqrt{\bar{\epsilon}_f/L}$, which is the value that minimizes  the right hand side of the  inequality. Therefore, by \eqref{bound-noise} we can assume that $\bar{\epsilon}_g = 2\sqrt{L\bar{\epsilon}_f}$, and the asymptotic accuracy  can be estimated as
\begin{align}
\bar{\eta} %&=
%\frac{L}{2 \mu c_1 \tau  (1-\beta)^2} \left( \frac{c_1 \tau (1-\beta)^2 }{L\beta^2} \bar{\epsilon}_g^2  + 2\epsilon_{ \rm A} +2 \bar{\epsilon}_f \right)       \nonumber \\
               &= \frac{1}{2 \mu \beta^2} \bar{\epsilon}_g^2  +\frac{L}{ \mu c_1 \tau  (1-\beta)^2}\left( \epsilon_{ \rm A} + \bar{\epsilon}_f \right)    \label{likefixed}       \\
              & = \frac{2L}{ \mu \beta^2} \bar{\epsilon}_f  +\frac{L}{ \mu c_1 \tau  (1-\beta)^2}\left( \epsilon_{ \rm A} + \bar{\epsilon}_f \right) \nonumber \\
               &= \frac{L\bar{\epsilon}_f}{\mu} \left( \frac{2}{  \beta^2}   +\frac{1+\theta}{ c_1 \tau  (1-\beta)^2}  \right) ,   \label{epsfonly}
\end{align}
where in the last line we have written $\epsilon_{ \rm A} =\theta \bar{\epsilon}_f$, for some $\theta >1$. If we choose $\beta=1-4c_1$ when $c_1<1/4$, then
\begin{align}
\bar{\eta} = \frac{L\bar{\epsilon}_f}{\mu} \left( \frac{2}{  (1-4c_1)^2}   +\frac{1+\theta}{ 16  \tau c_1^3}  \right). \label{epsfonly2}
\end{align}

Equations \eqref{epsfonly} and \eqref{epsfonly2} show that the asymptotic accuracy level $\bar{\eta}$ is proportional to the bound in the error in the objective function \eqref{bound-noise-f} times the condition number. 
Note also that \eqref{likefixed} is comparable (but larger) than the accuracy level $ \bar{\epsilon}_g^2 /2 \mu$ in Theorem~\ref{thm:constant_oc}. This is not surprising as it reflects the fact that, although the line search method can take steps that are much larger than $1/L$,  the theory only uses the fact that $\alpha$ cannot be much smaller than $1/L$.

In the analysis presented in this section, we have chosen to analyze a backtracking line search  rather than one satisfying the Armijo-Wolfe conditions \eqref{eq:armijo}-\eqref{eq:wolfe}  because in the strongly convex case, the curvature condition \eqref{eq:wolfe} is less critical, and because this simplifies the analysis. It would, however, be possible to do a similar analysis for the Armijo-Wolfe conditions, but the algorithm and analysis would be more complex. (We should also mention that we could remove the parameter $\epsilon_{ \rm A}$ from the relaxed Armijo condition \eqref{armijo} at the cost of making the analysis more complex.)

An extension of Theorem~\ref{thm:armijo} to a quasi-Newton iteration with positive definite $H_k$ could be proved, but for the reasons discussed above, we have limited our analysis to the case $H_k=I$.

%%%%%%%%%%%%%%%%%%%%%
\section{Numerical Experiments on Noisy Functions}
\label{numerical}
%%%%%%%%%%%%%%%%%%%%%

In this section we present numerical results comparing the performance of the finite difference L-BFGS method ({\tt FDLM}, Algorithm \ref{alg1}), the  function interpolating trust region method (FI) described in \cite{conn2009introduction}, which we denote by  {\tt DFOtr}, and the direct search method \texttt{NOMAD} \cite{abramson2011nomad,audet2006mesh}. We selected 49 nonlinear optimization test problems from the Hock and Schittkowski collection \cite{schttfkowski1987more}. The distribution of problems, in terms of their dimension $n$, is given in Table \ref{hs_probs}. 

\begin{table}[H]
\centering
\caption{\small Dimensions of Hock-Schittkowski problems tested}  
\label{hs_probs} 
\small
%\vspace*{0.1in}
\begin{tabular}{|c||ccccccccccc|}  \hline 
$n$                                                          & 2  & 3 & 4 & 5 & 6 & 9 & 10 & 20 & 30 & 50 & 100 \\ \hline \hline
\begin{tabular}[c]{@{}c@{}}Number of  Problems\end{tabular} & 15 & 6 & 5 & 2 & 5 & 1 & 4   & 3  & 3  & 3  & 2 \\
\hline
\end{tabular}
\end{table}

A limit of $100 \times n$ function evaluations and 30 minutes of {\sc cpu} time is given to each method. The FDLM method  also terminates if one of the following two conditions holds: i) $| f_{\tiny \rm{MA}} (x_k)- f(x_k)| \leq 10^{-8} \max\{1,|f_{\tiny \rm{MA}}(x_k)|\}$, where $f_{\tiny \rm{MA}}(x_k)$ is defined in \eqref{mav};  or ii) $\| \nabla_{h}f(x_k) \|  \leq 10^{-8}$. The FI method terminates if the trust region radius satisfies $ \Delta_k  \leq 10^{-8}$. The very small tolerance $10^{-8}$ was chosen to display the complete evolution of the runs. We ran   \texttt{NOMAD}  with default parameters \cite{abramson2011nomad}.

We experimented with 4 different types of noise: (i) stochastic additive, (ii) stochastic multiplicative, (iii) deterministic additive, and (iv) deterministic multiplicative, and for each type of noise we consider 4 different noise levels. 
Below, we show a small sample of results for the first two types of noise. 
\smallskip
\paragraph{Stochastic Additive Noise}
The objective function has the form $f(x) = \phi(x)+  \epsilon(x)$,
where $\phi$ is a smooth function and $ \epsilon(x)$ is a uniform random variable, i.e.,
\begin{equation}		\label{stoch_noise}
	  \epsilon(x) \sim U(-\xi,\xi).
\end{equation}
We investigate the behavior of the methods for  noise levels corresponding to $\xi \in \{10^{-8}, 10^{-6}, 10^{-4}, 10^{-2} \}$. In Figure~\ref{stoch_add_exp} we report results for the 4 problems studied in Section~\ref{sec:det_case}, namely {\tt s271}, {\tt s334}, {\tt s293} and  {\tt s289}, for 2 different noise levels ($10^{-8}$ and $10^{-2}$). The figure plots the optimality gap ($f(x_k) - \phi^\star$) versus the number of function evaluations. (For all test problems, $\phi^\star$ is known.) We  note that $\phi^\star =0$ for problems {\tt s271}, {\tt s293}, {\tt s289}. 
The first problem, {\tt s271} is quadratic, which is benign for {\tt DFOtr}, which  terminates as soon as a fully quadratic model is constructed.

\begin{figure}[htp]
\begin{centering}

\includegraphics[width=0.24\textwidth]{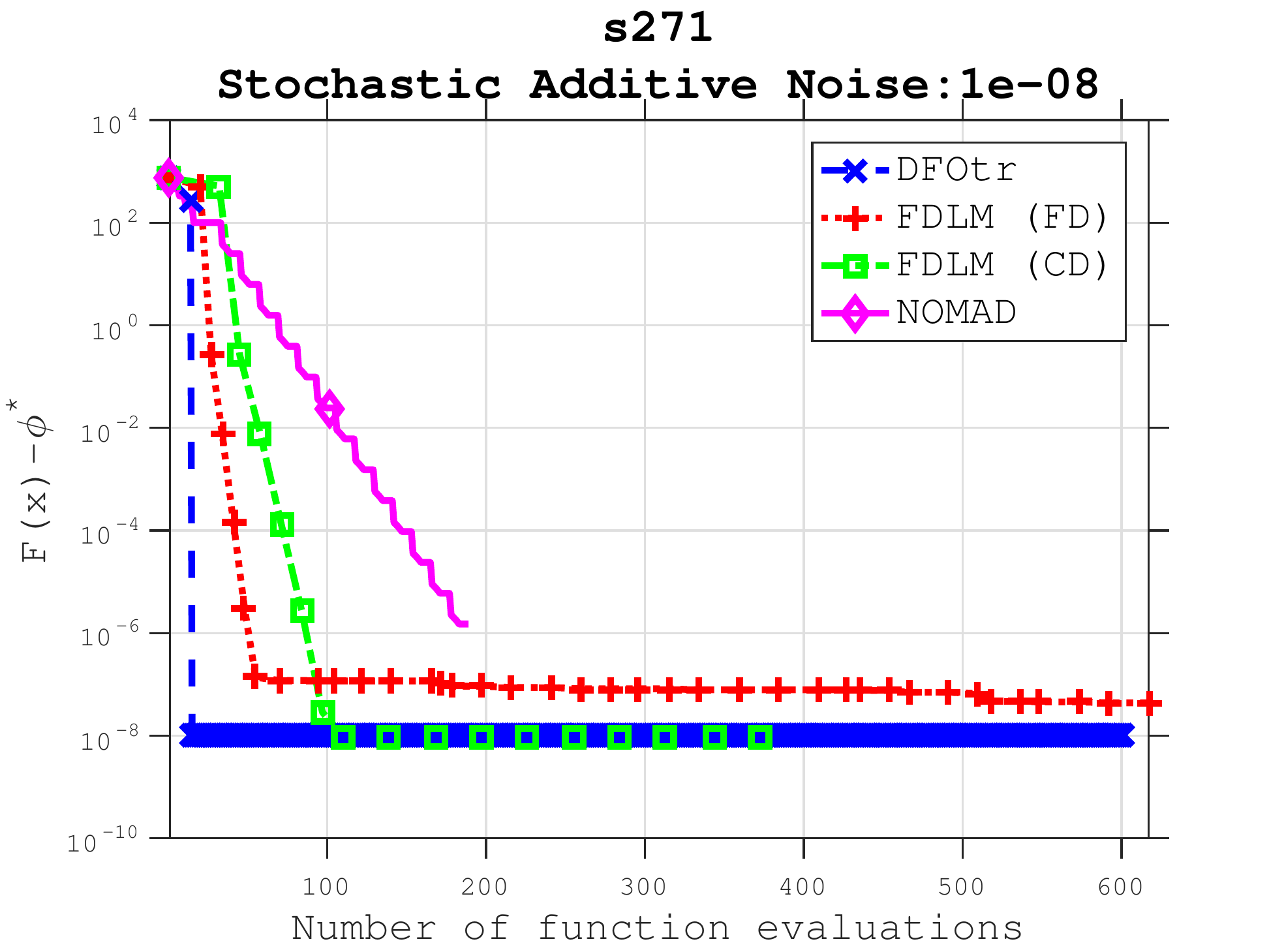}
\includegraphics[width=0.24\textwidth]{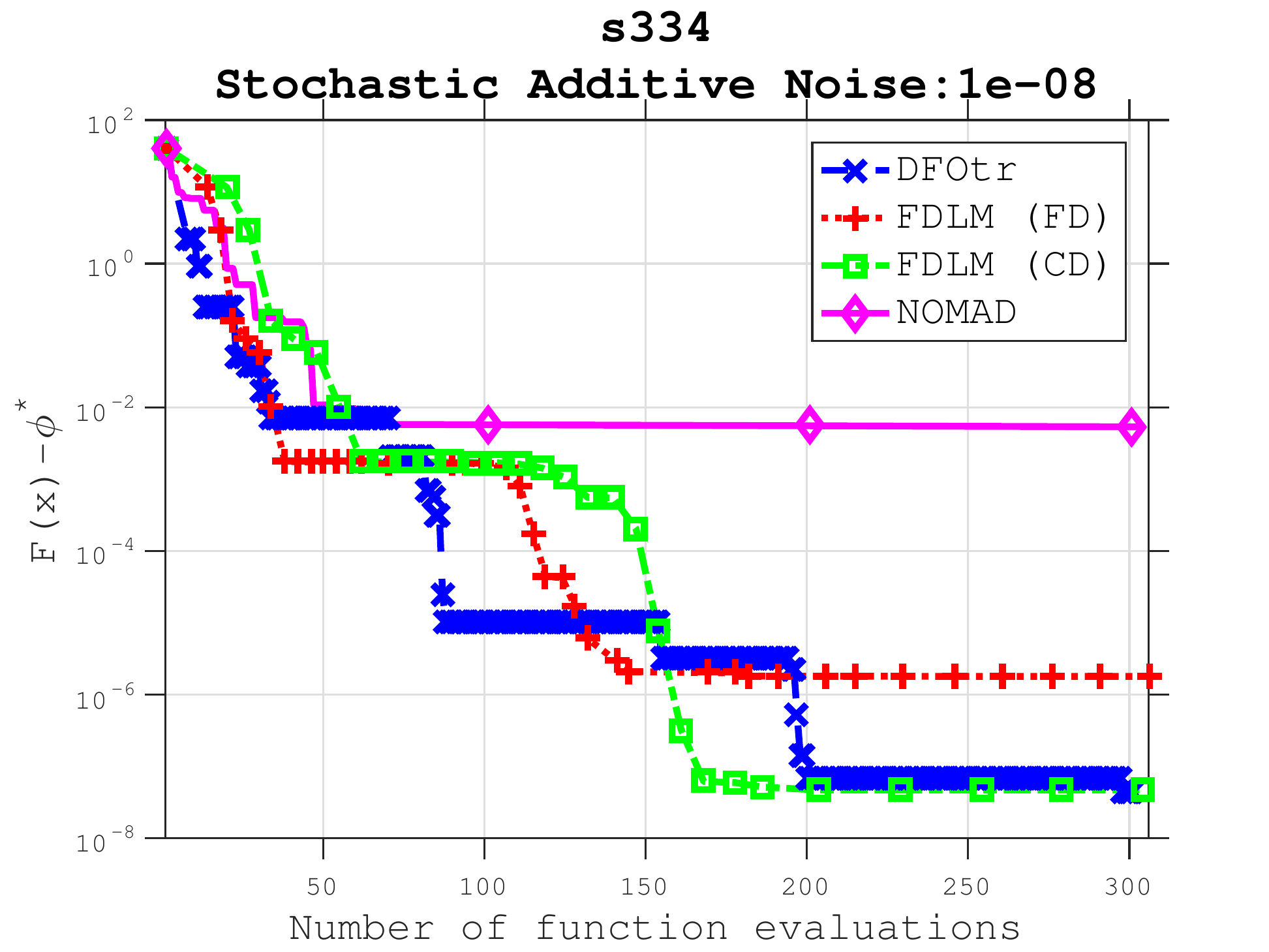}
\includegraphics[width=0.24\textwidth]{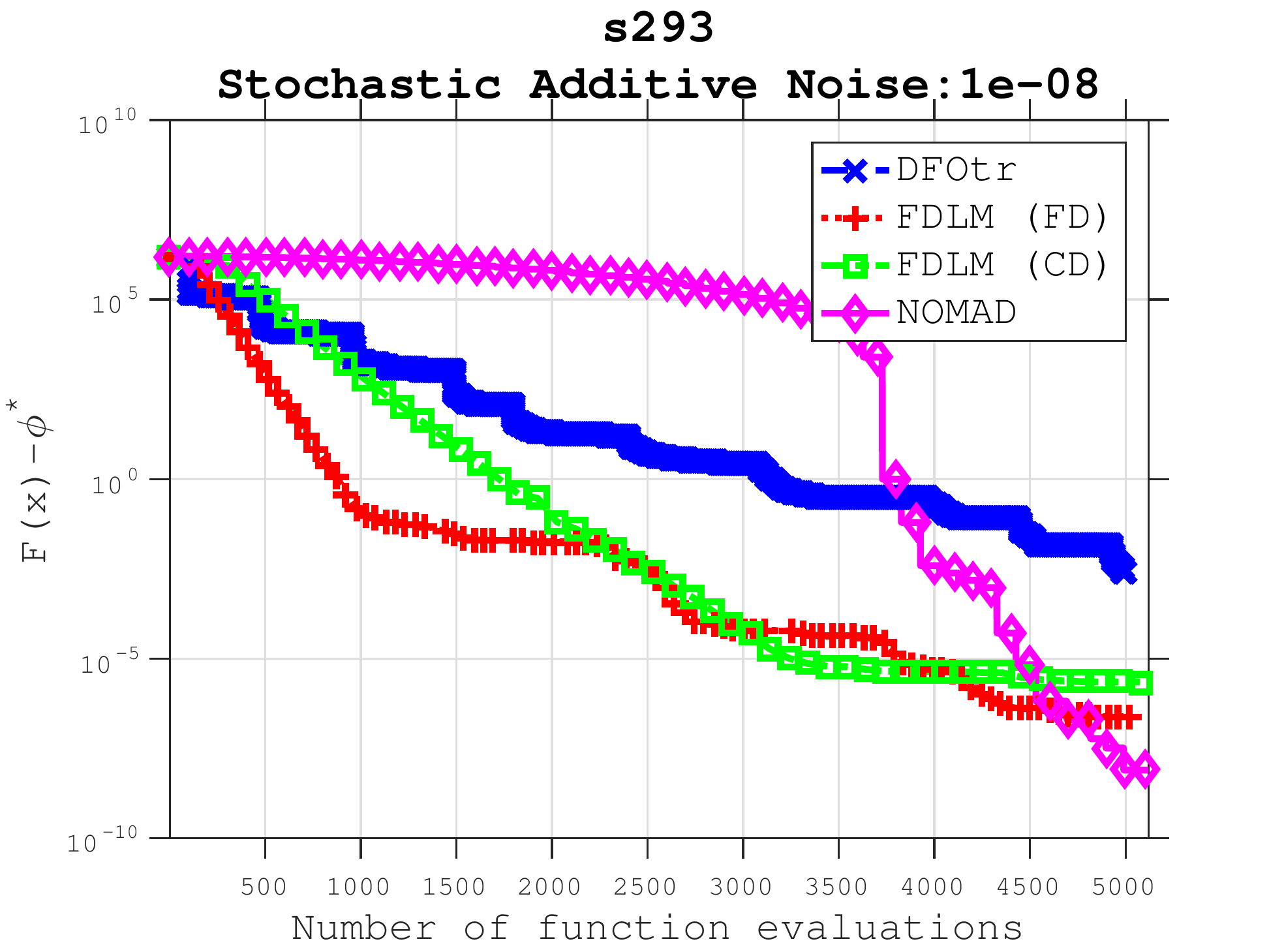}
\includegraphics[width=0.24\textwidth]{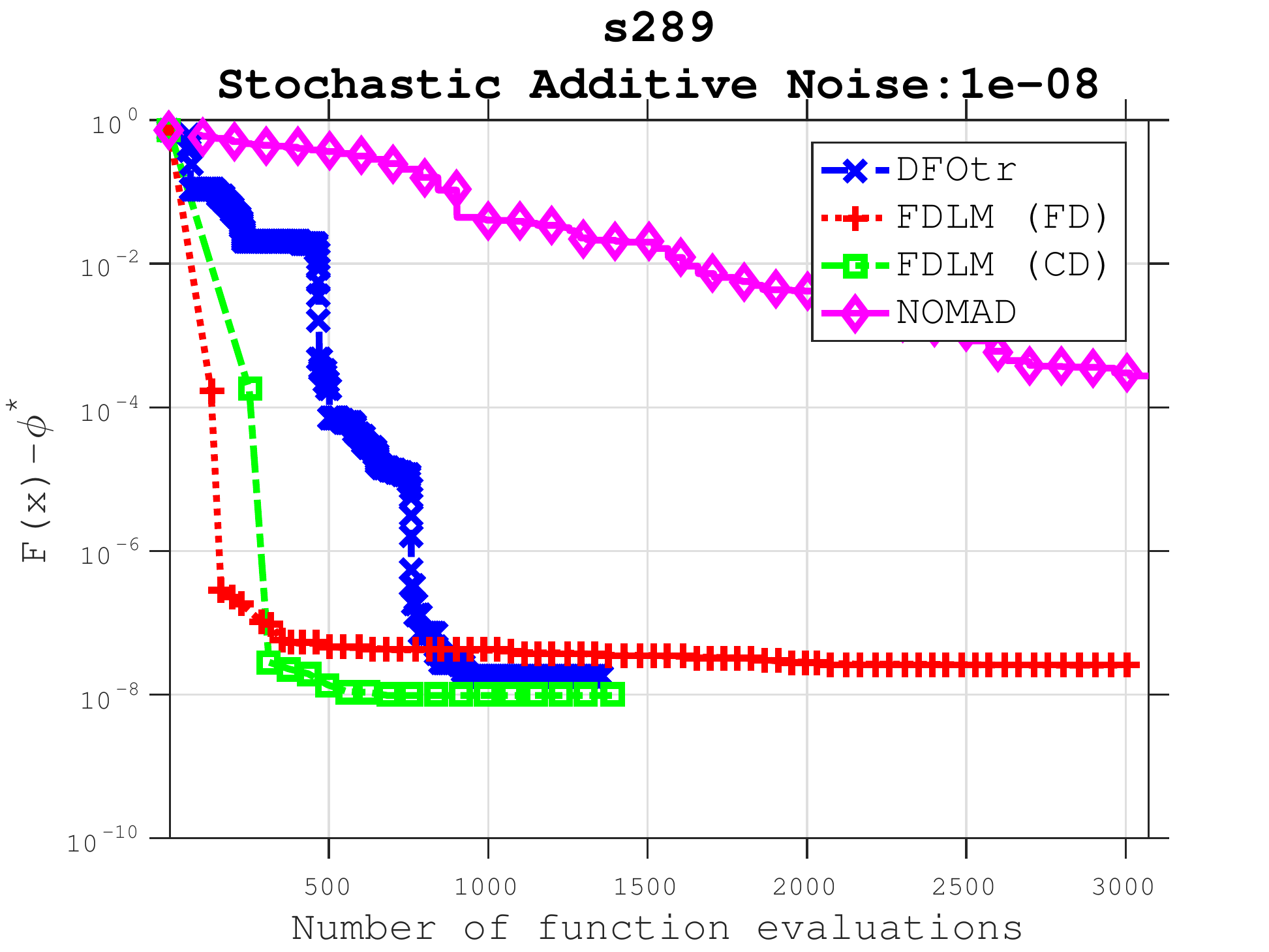}

\includegraphics[width=0.24\textwidth]{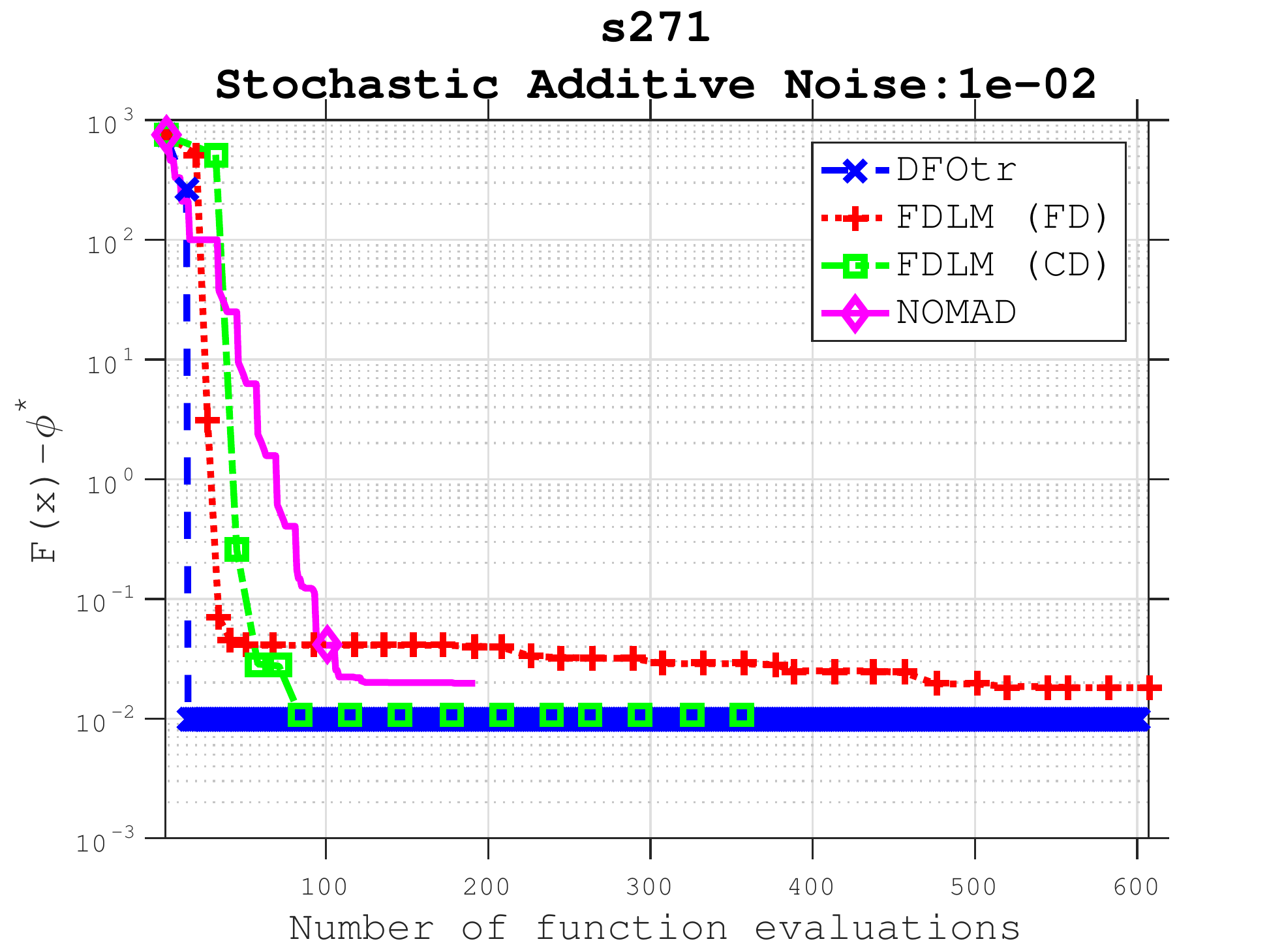}
\includegraphics[width=0.24\textwidth]{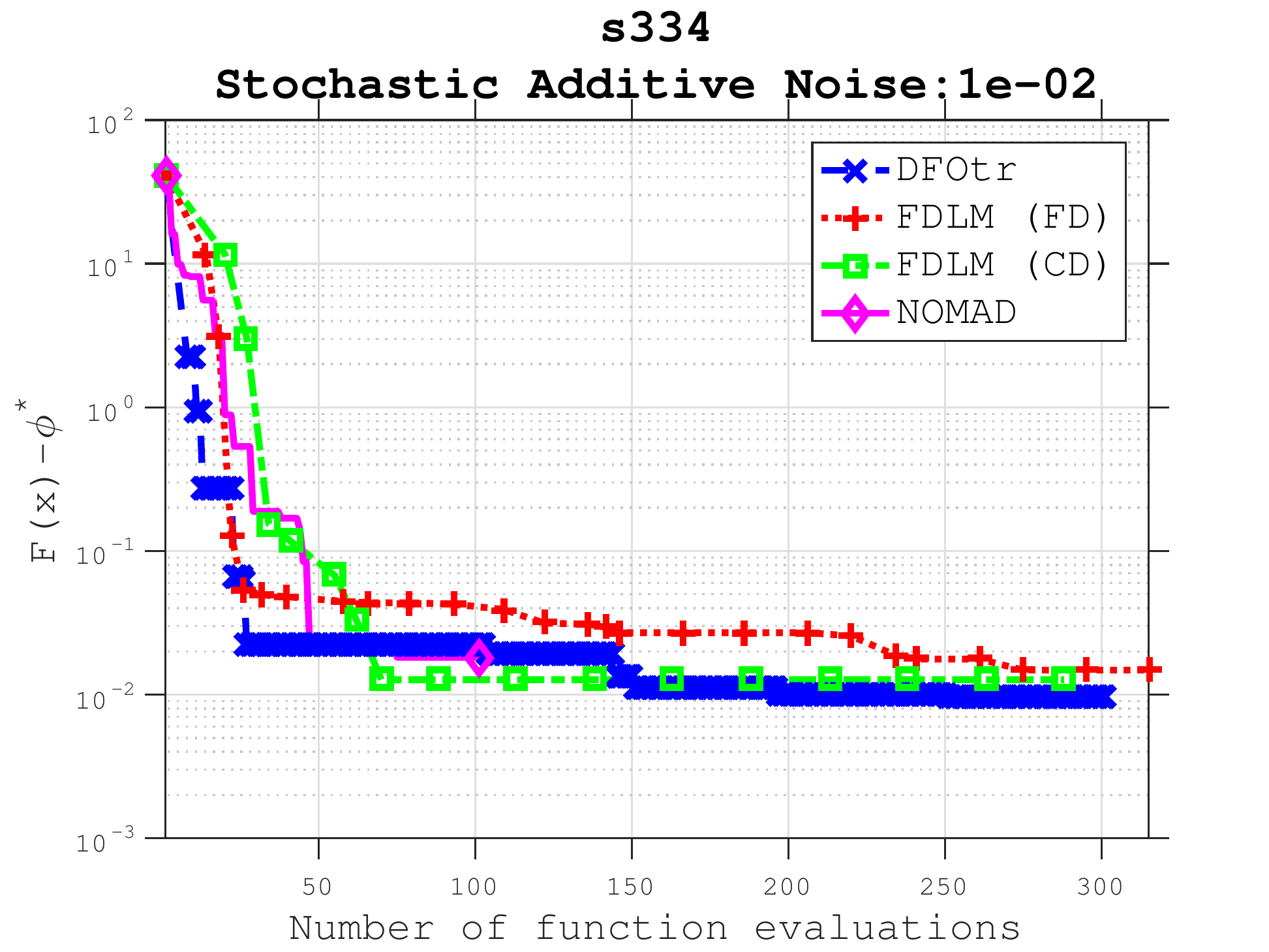}
\includegraphics[width=0.24\textwidth]{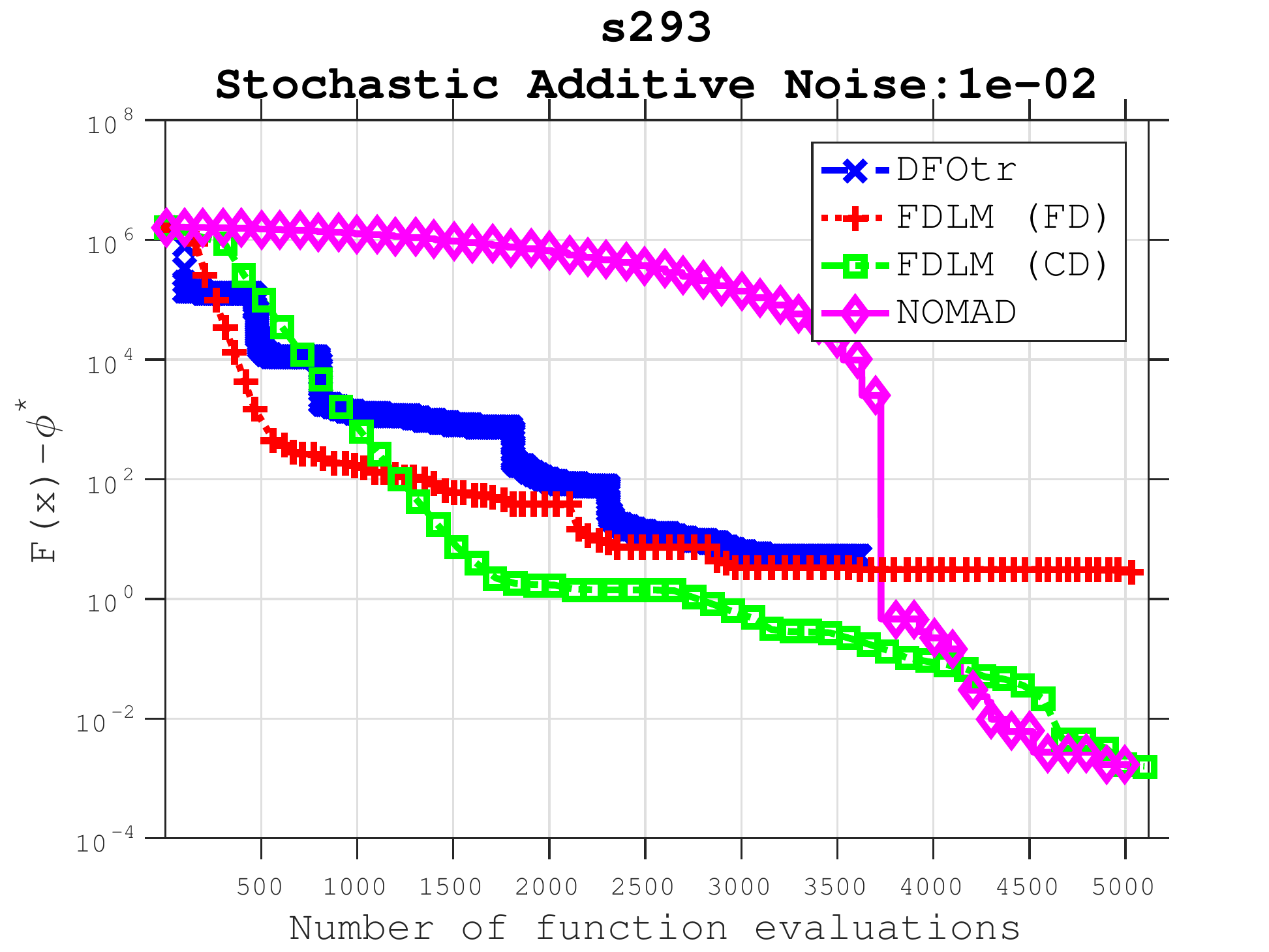}
\includegraphics[width=0.24\textwidth]{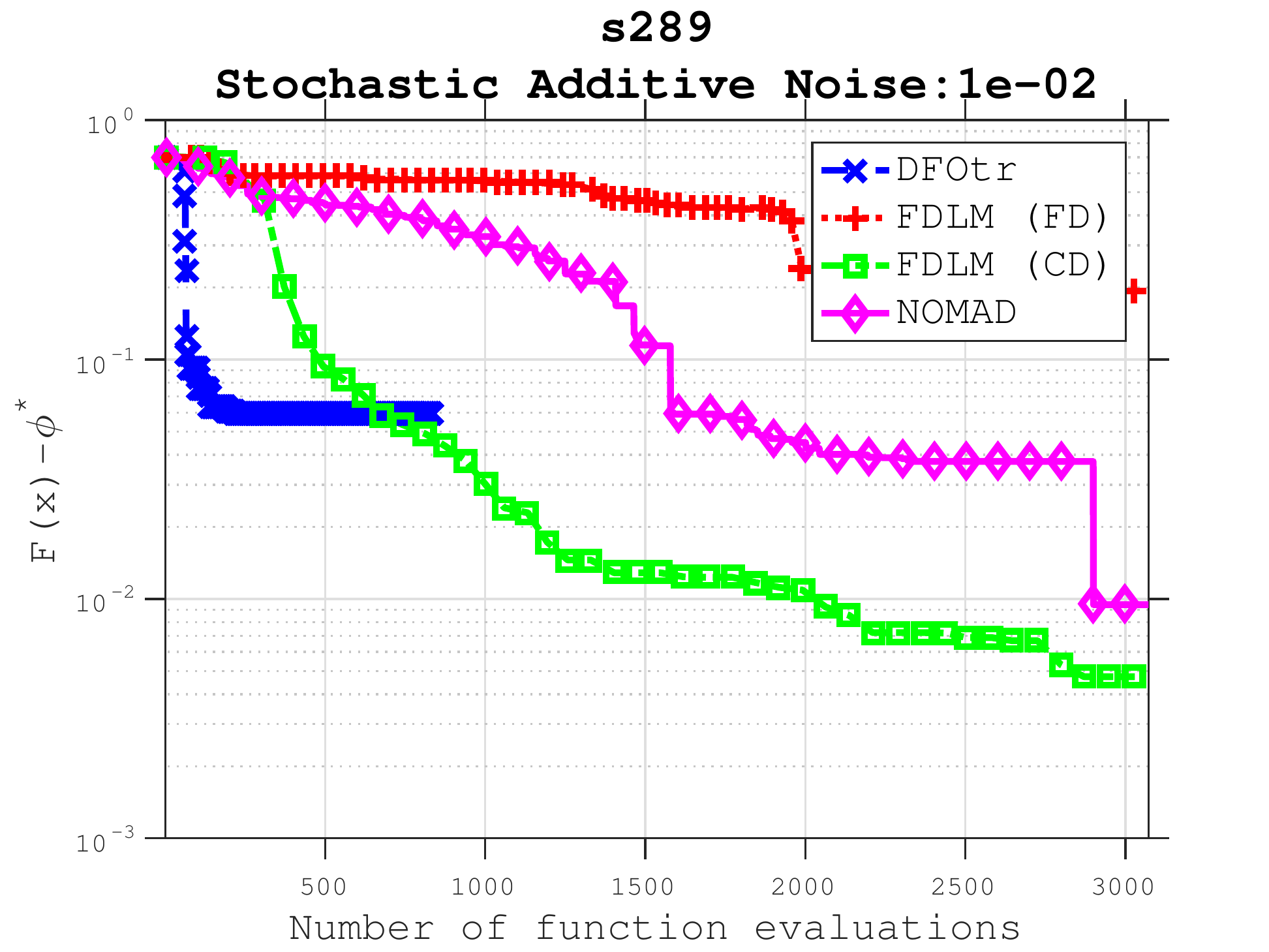}

\par\end{centering}
\caption{\small \texttt{Stochastic Additive Noise.} Performance of the function interpolating trust region method ({\tt DFOtr}) described in \cite{conn2009introduction}, {\tt NOMAD}\cite{audet2006mesh,abramson2011nomad} and the finite difference L-BFGS method ({ \tt FDLM}) using forward or central differences. The figure plots results for 4 problems from the Hock-Schittkowki collection \cite{schttfkowski1987more}, for 2 different noise levels. Each column represents a different problem and each row a different noise level. 
}
\label{stoch_add_exp}
\end{figure}

\smallskip
\paragraph{Stochastic Multiplicative Noise} 
The objective  has the form $ f(x)= \phi(x)(1+ \hat \epsilon(x))$,
where $\phi$ is smooth and $\hat \epsilon(x)$ is the stochastic noise defined as in \eqref{stoch_noise}. 
We can write the objective in the additive form $f(x) = \phi(x) + \epsilon(x)$, where  $\epsilon(x)= \phi(x) \hat \epsilon(x)$ varies with $x$, and since $\phi^\star =0$ for problems {\tt s271}, {\tt s293} and {\tt s289}, the noise term $\epsilon(x) $ decays to zero as $x_k$ approaches the solution; this is not the case for problem {\tt s334}.
The results are given in Figure~\ref{stoch_mult_exp}.

\begin{figure}[]
\begin{centering}

\includegraphics[width=0.24\textwidth]{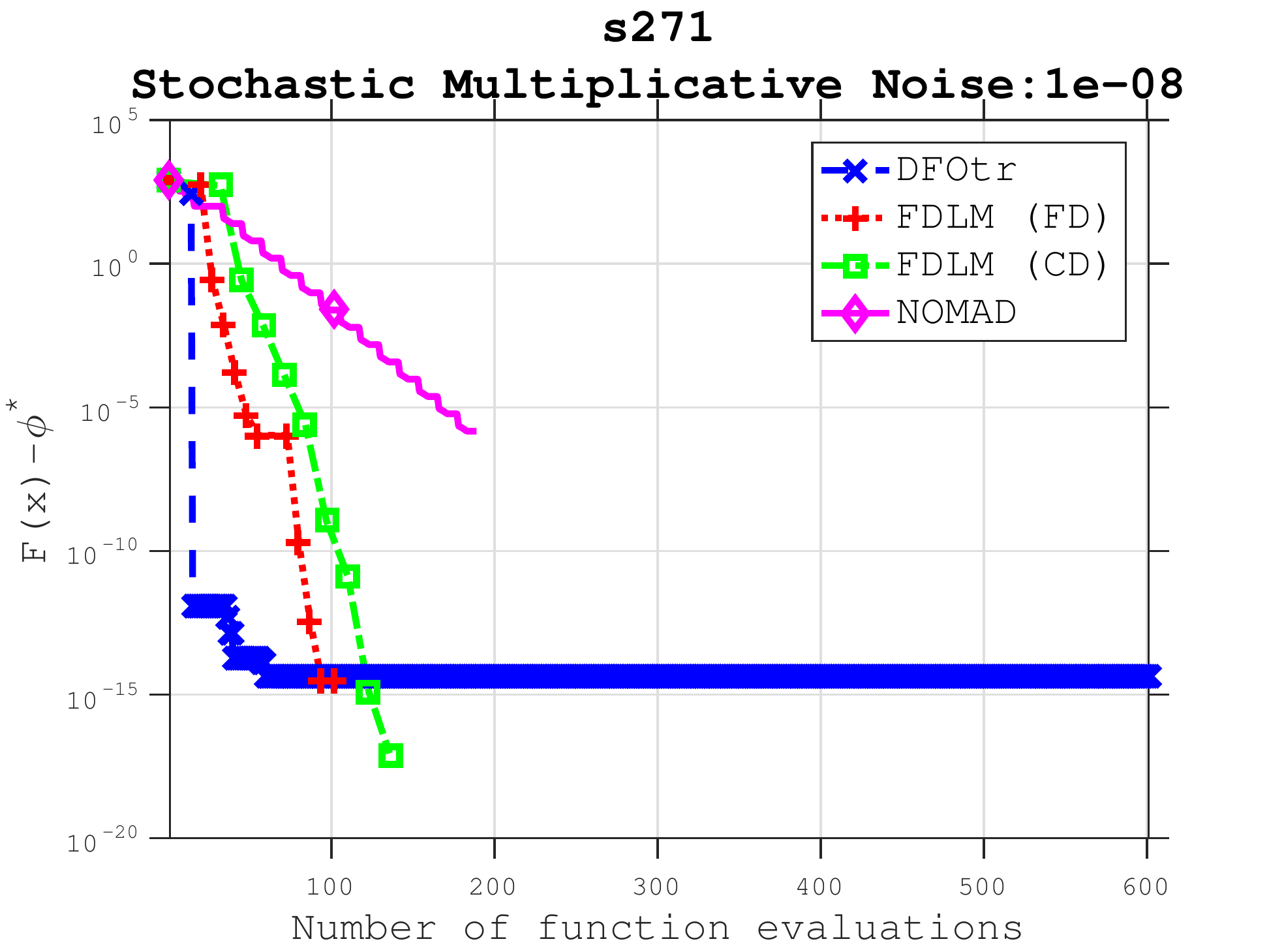}
\includegraphics[width=0.24\textwidth]{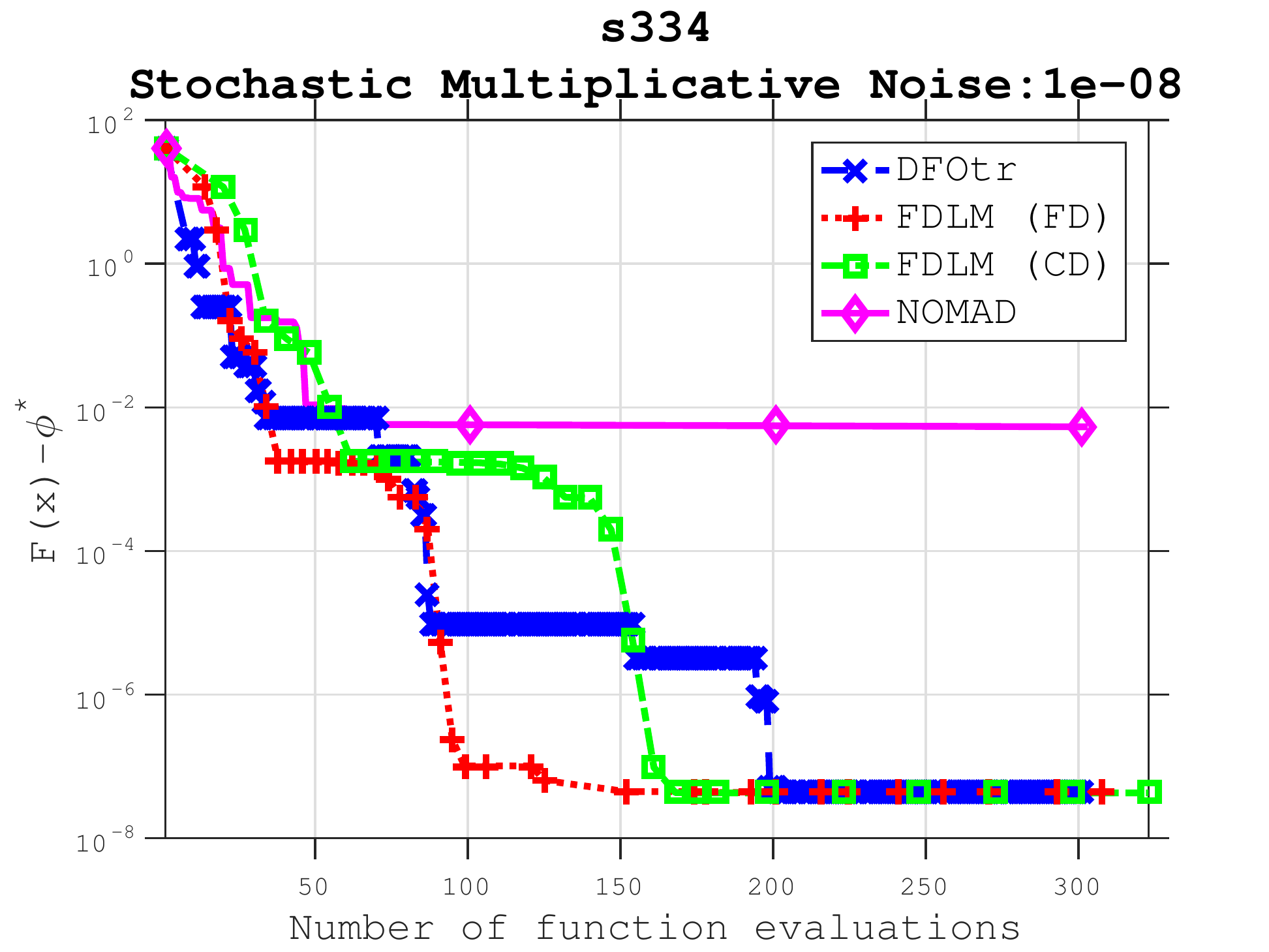}
\includegraphics[width=0.24\textwidth]{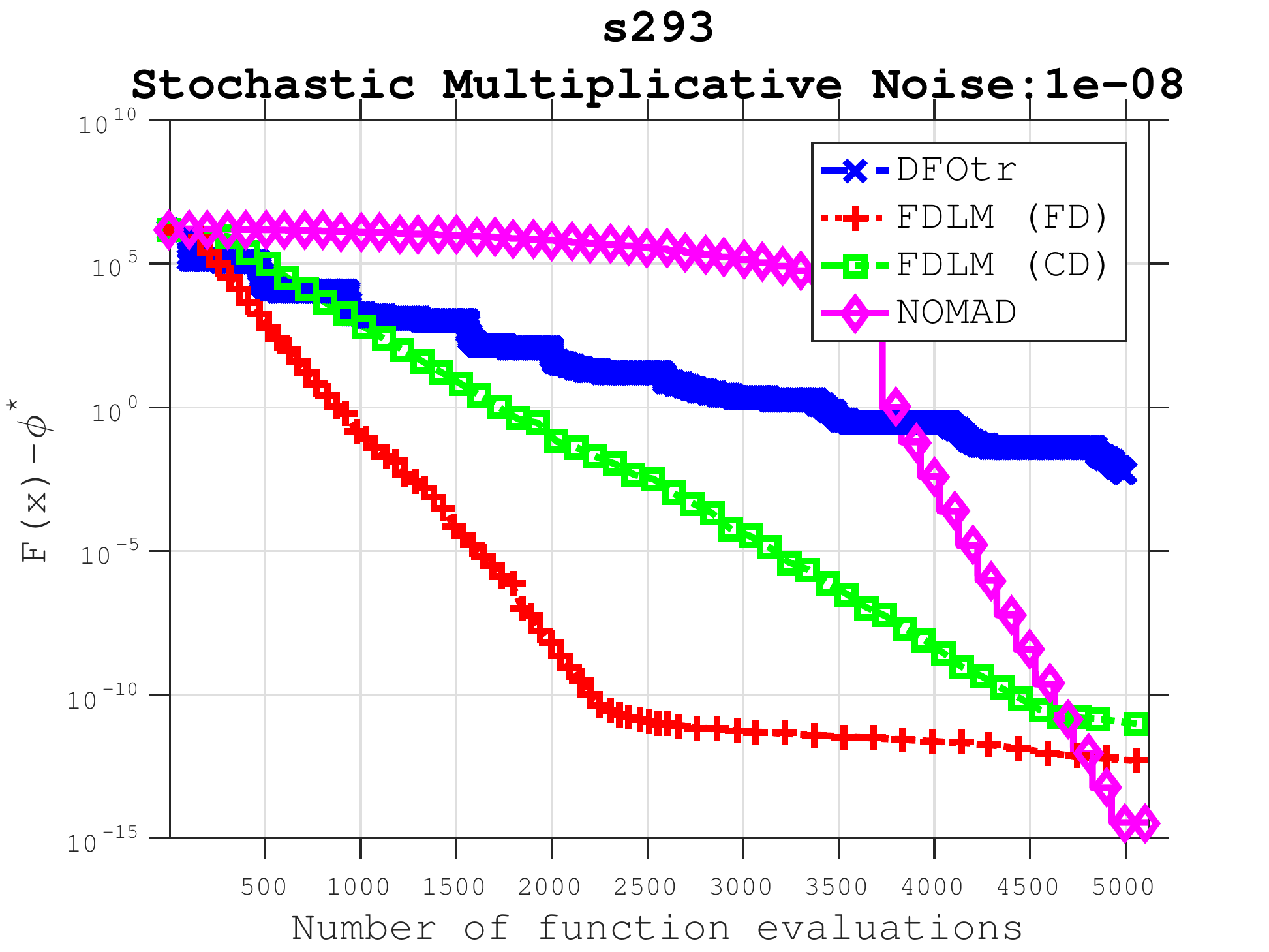}
\includegraphics[width=0.24\textwidth]{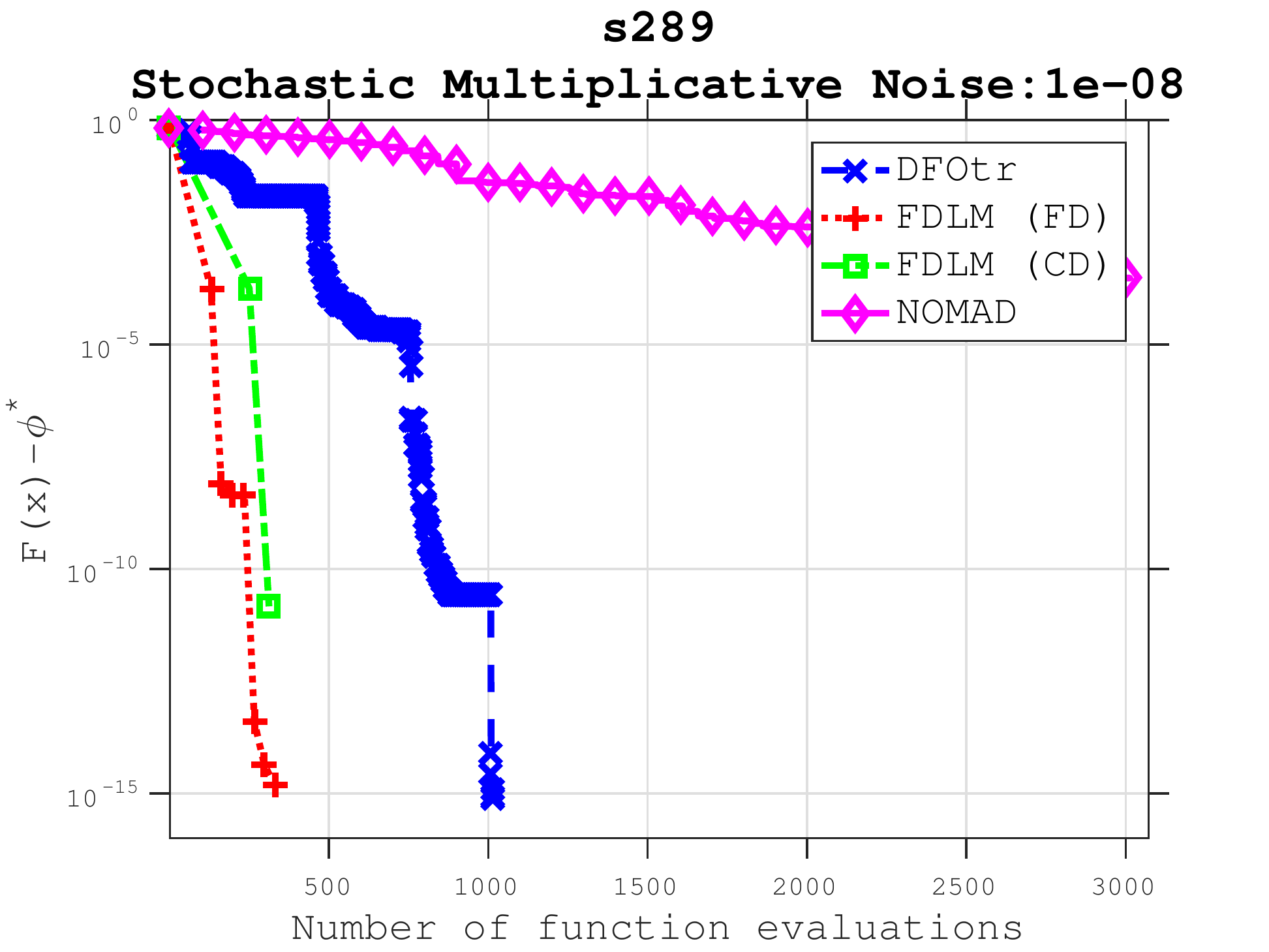}

\includegraphics[width=0.24\textwidth]{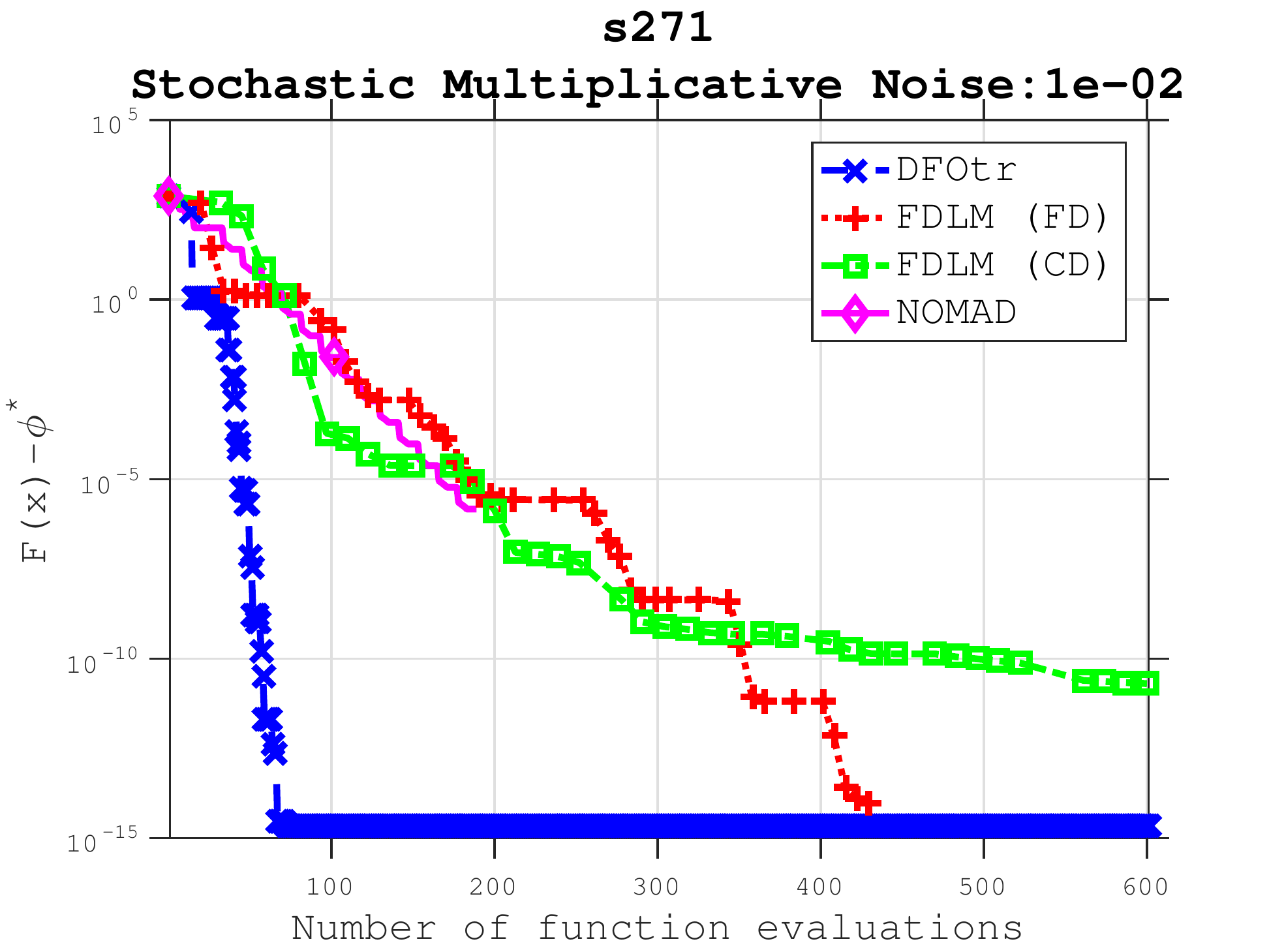}
\includegraphics[width=0.24\textwidth]{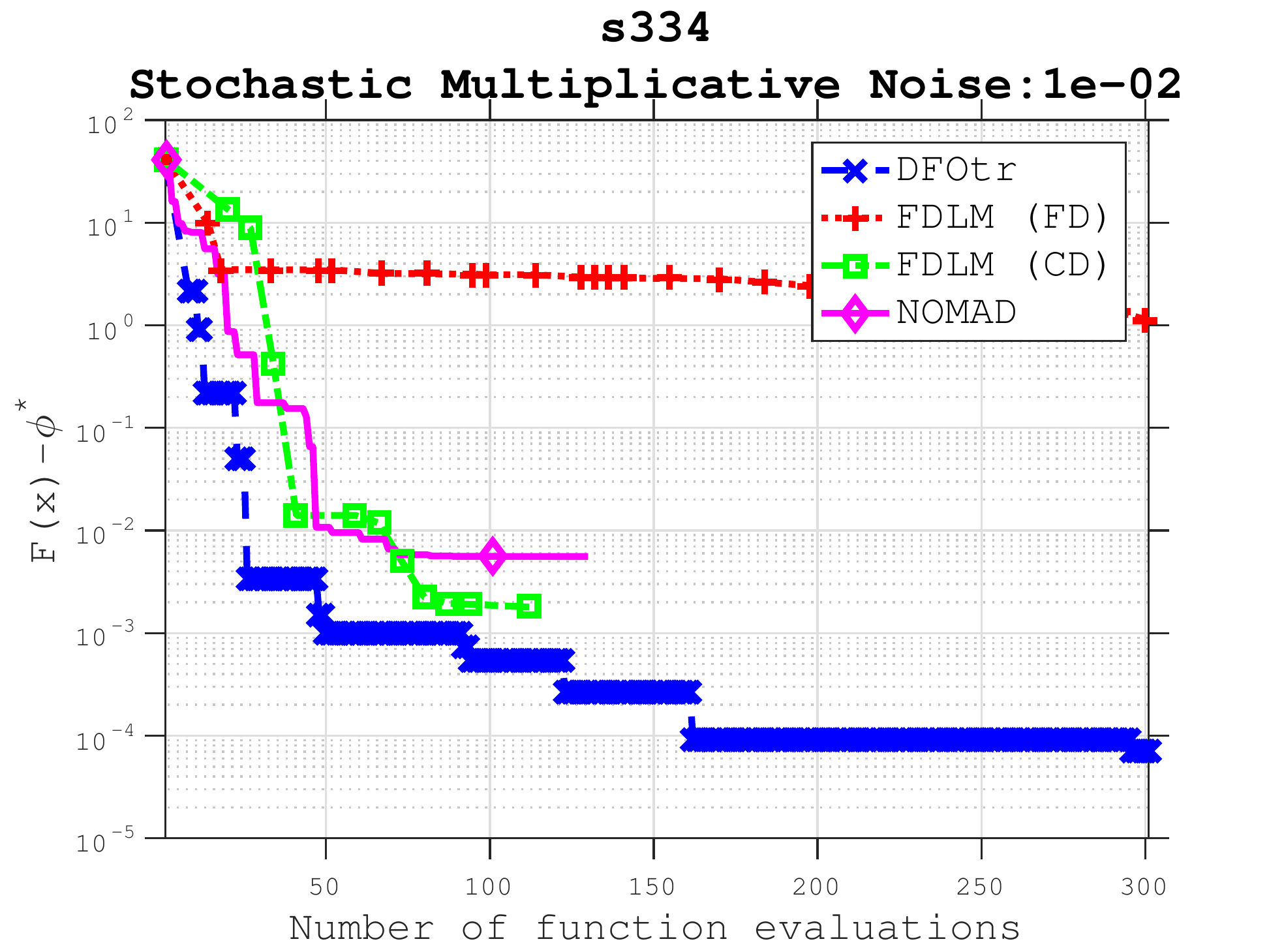}
\includegraphics[width=0.24\textwidth]{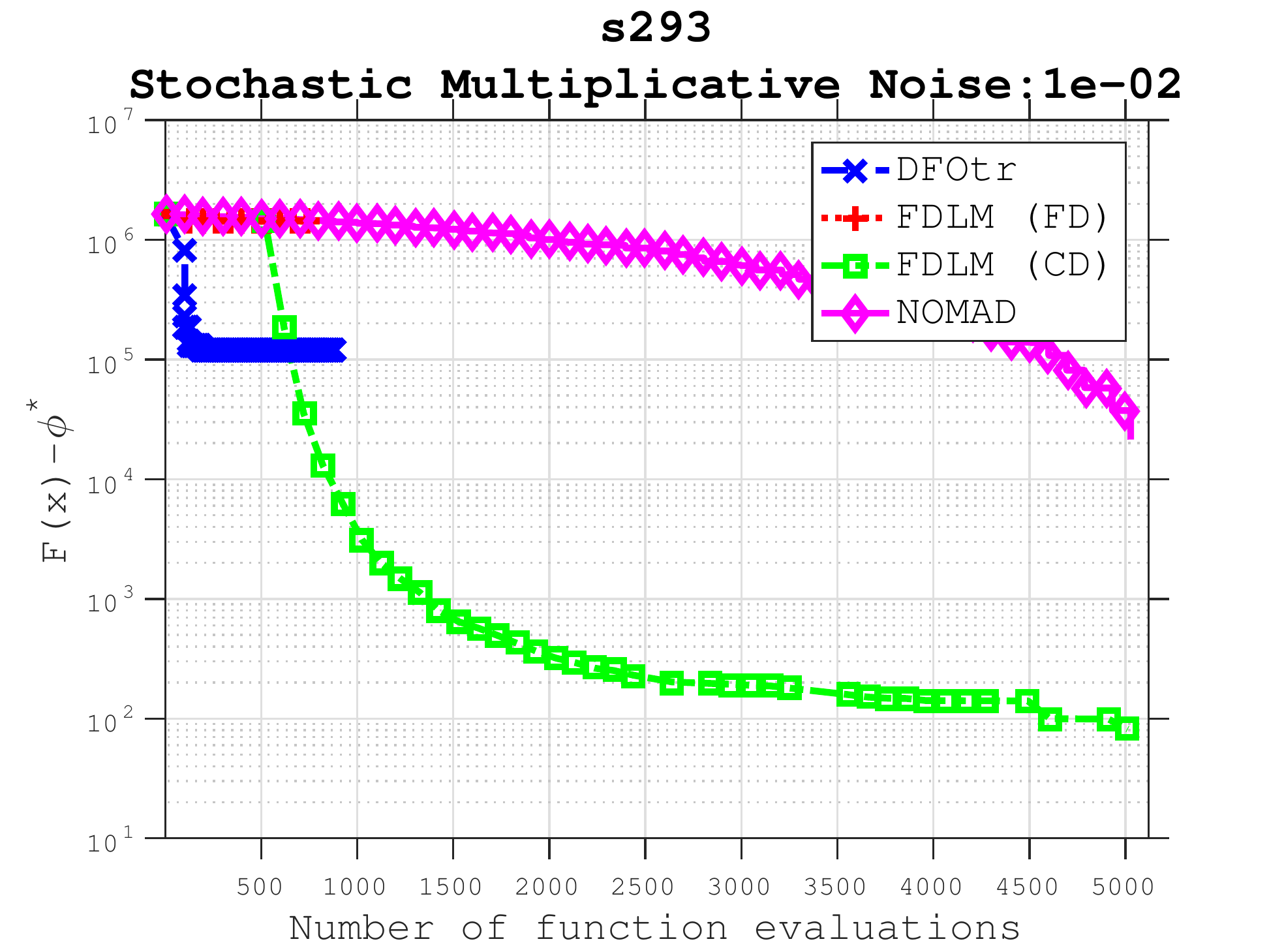}
\includegraphics[width=0.24\textwidth]{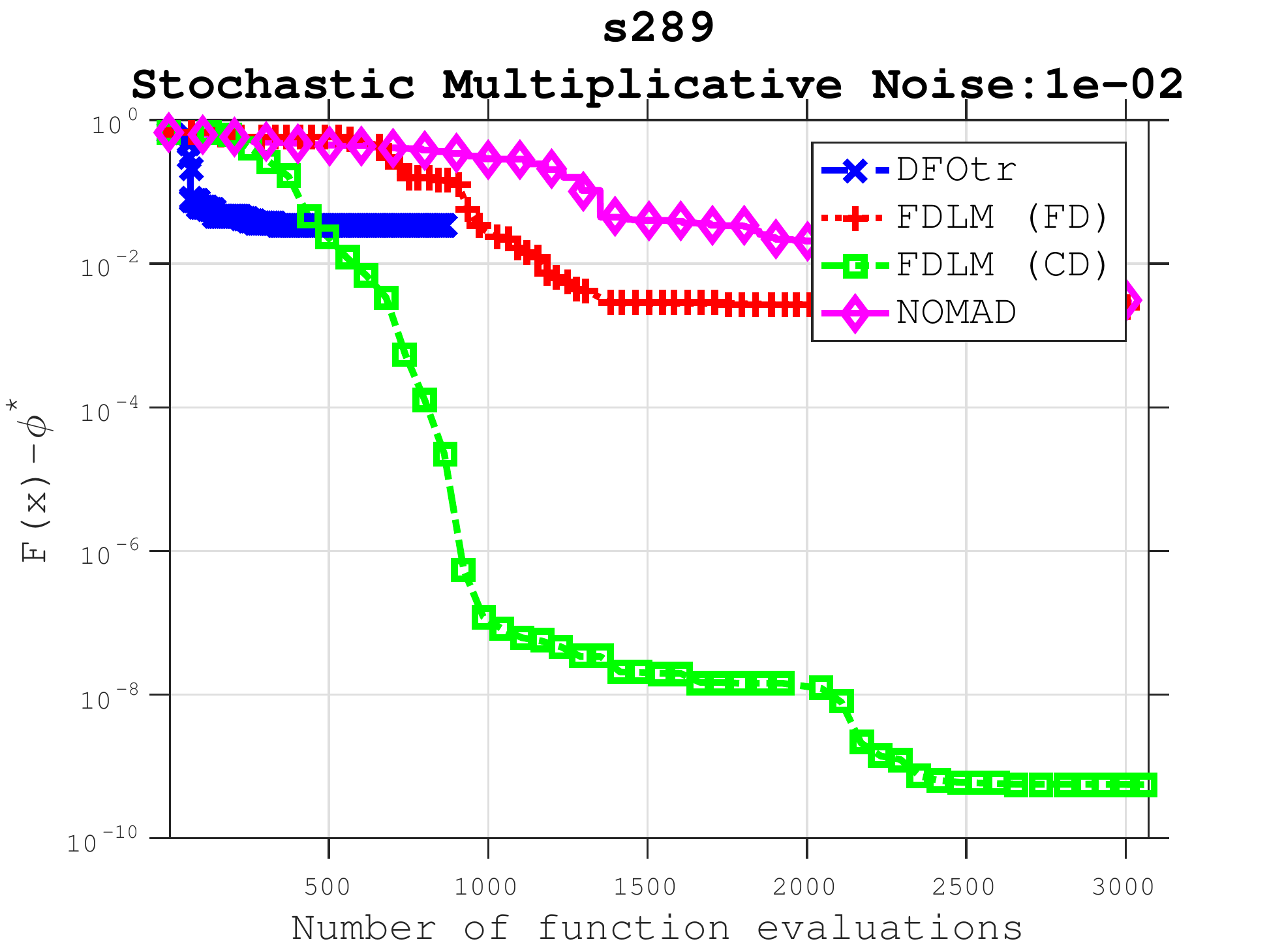}

\par\end{centering}
\caption{\small \texttt{Stochastic Multiplicative Noise.} 
Performance of the function interpolating trust region method ({\tt DFOtr}) described in \cite{conn2009introduction}, {\tt NOMAD} \cite{audet2006mesh,abramson2011nomad} and the finite difference L-BFGS method ({ \tt FDLM}) using forward or central differences. The figure plots results for 4 problems from the Hock-Schittkowki collection \cite{schttfkowski1987more}, for 2 different noise levels. Each column represents a different problem and each row a different noise level. }
%Performance of the function interpolating trust region method ({\tt DFOtr})  and the finite difference L-BFGS ({ \tt FDLM}) method on the four problems displayed in Figure~\ref{stoch_add_exp}. }
%\caption{\small \texttt{Stochastic Multiplicative Noise.} Performance of the function interpolating trust region method ({\tt DFOtr}) described in \cite{conn2009introduction}, and the finite difference L-BFGS ({ \tt FDLM}) method using forward or central differences. The figure plots results for 4 problems from the Hock-Schittkowki collection,   for 2 different noise levels. Each column represents a different problem and each row a different noise level. }
\label{stoch_mult_exp}
\end{figure}

\paragraph{Performance Profiles}

Figure~\ref{perf_profs} shows performance profiles \cite{DolMor01} for all 49 problems, for  the $4$ different types of noise mentioned above, and for $2$ different noise levels ($10^{-8}$ and $10^{-2}$), giving a total of $392$ problems; see Appendix \ref{data_profs} for data profiles. Since we observe from the previous results that {\tt NOMAD} is the slowest of the methods, we do not report for it in the sequel. Deterministic noise was generated using the procedure proposed by Mor\'e and Wild \cite{more2009benchmarking}, which is described in Appendix~\ref{det_noise}. For these tests, we follow \cite{more2009benchmarking} and use the following convergence test that measures the decrease in function value
\begin{align}	\label{conv_perf_profs}
	  f(x_0) - f(x_k)  \leq  (1-\tau)\left( f(x_0) - f_L \right), 
\end{align}
where $\tau = 10^{-5}$, $x_0$ is the starting point, and $ f_L$ is  the smallest value of $f$ obtained by any solver within a given budget of  $100\times n$ function evaluations. This convergence test is well suited for derivative-free optimization because it is invariant to affine transformations and measures the function value reduction achieved relative to the best possible reduction \cite{more2009benchmarking}. 

\begin{figure}[]
\begin{centering}

\includegraphics[width=0.24\textwidth]{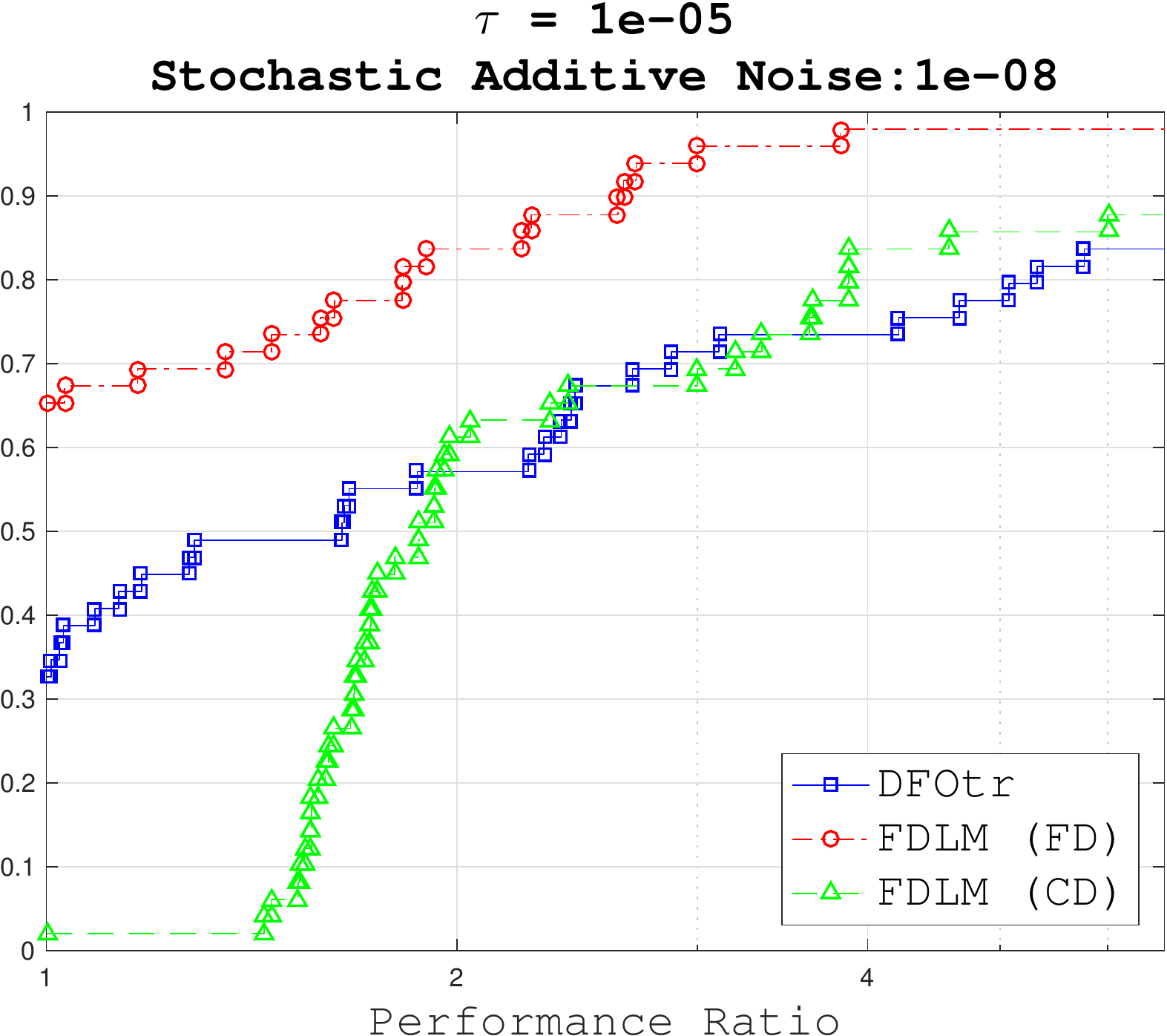}
\includegraphics[width=0.24\textwidth]{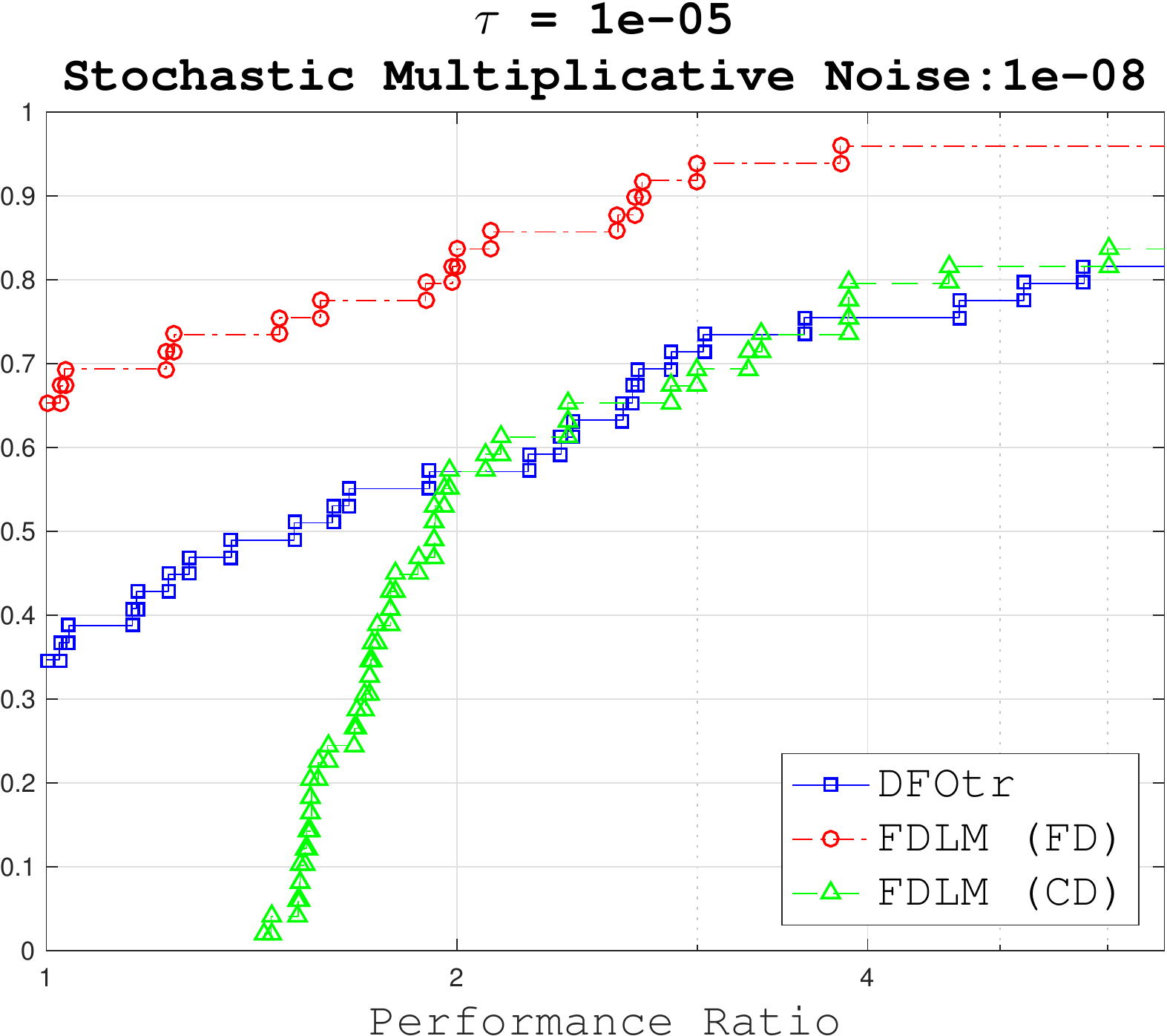}
\includegraphics[width=0.24\textwidth]{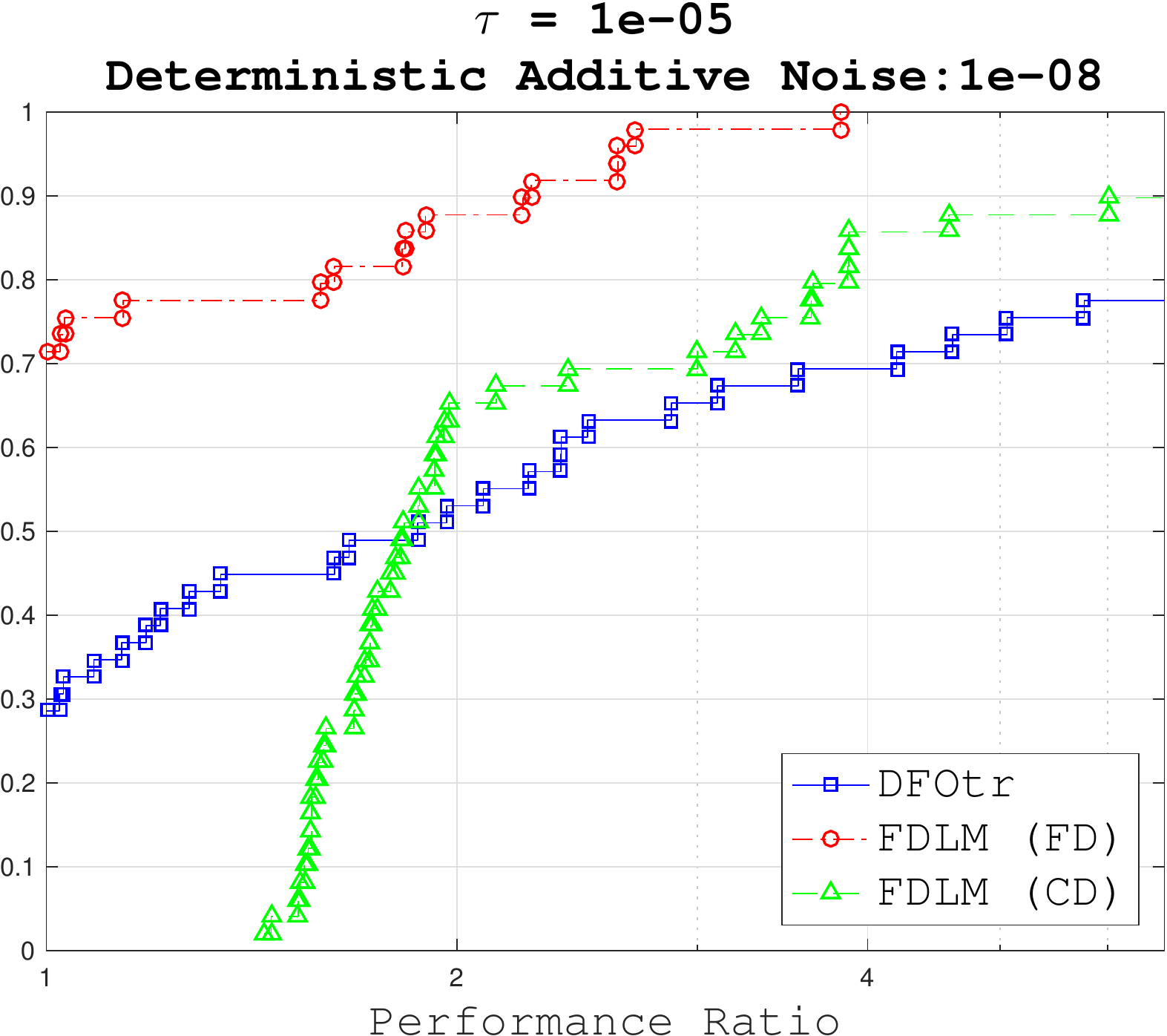}
\includegraphics[width=0.24\textwidth]{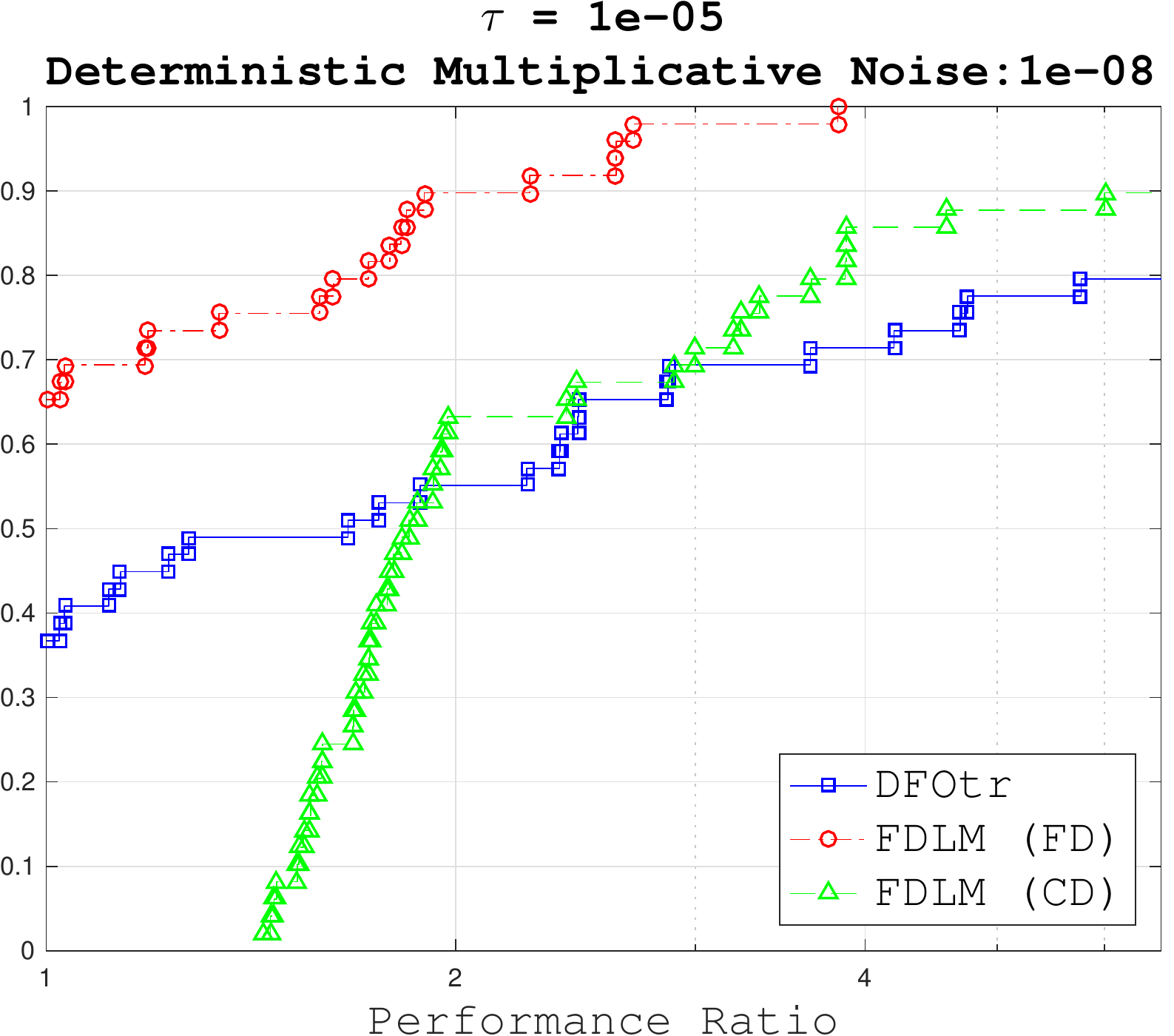}

\vspace{0.2cm}

\includegraphics[width=0.24\textwidth]{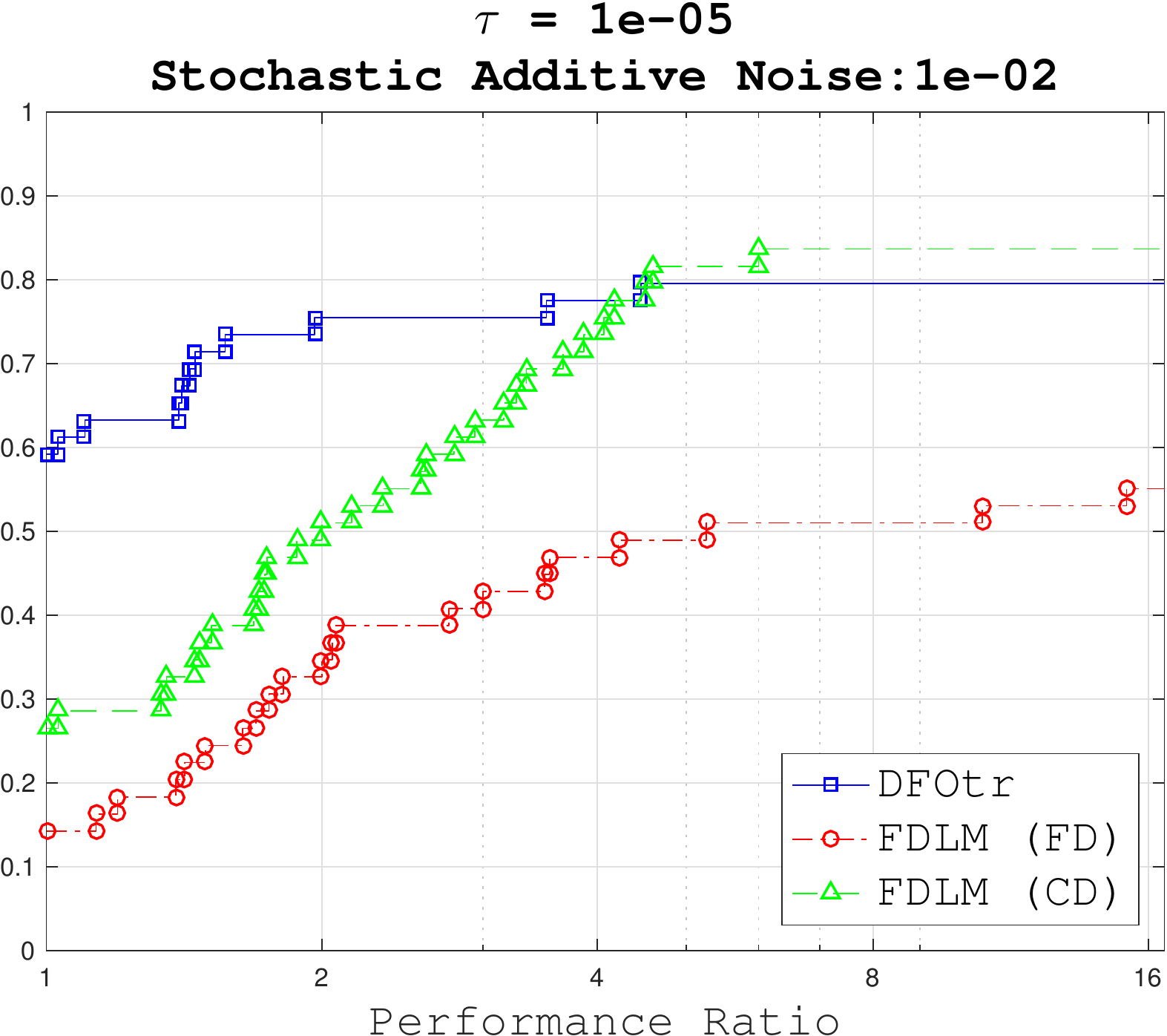}
\includegraphics[width=0.24\textwidth]{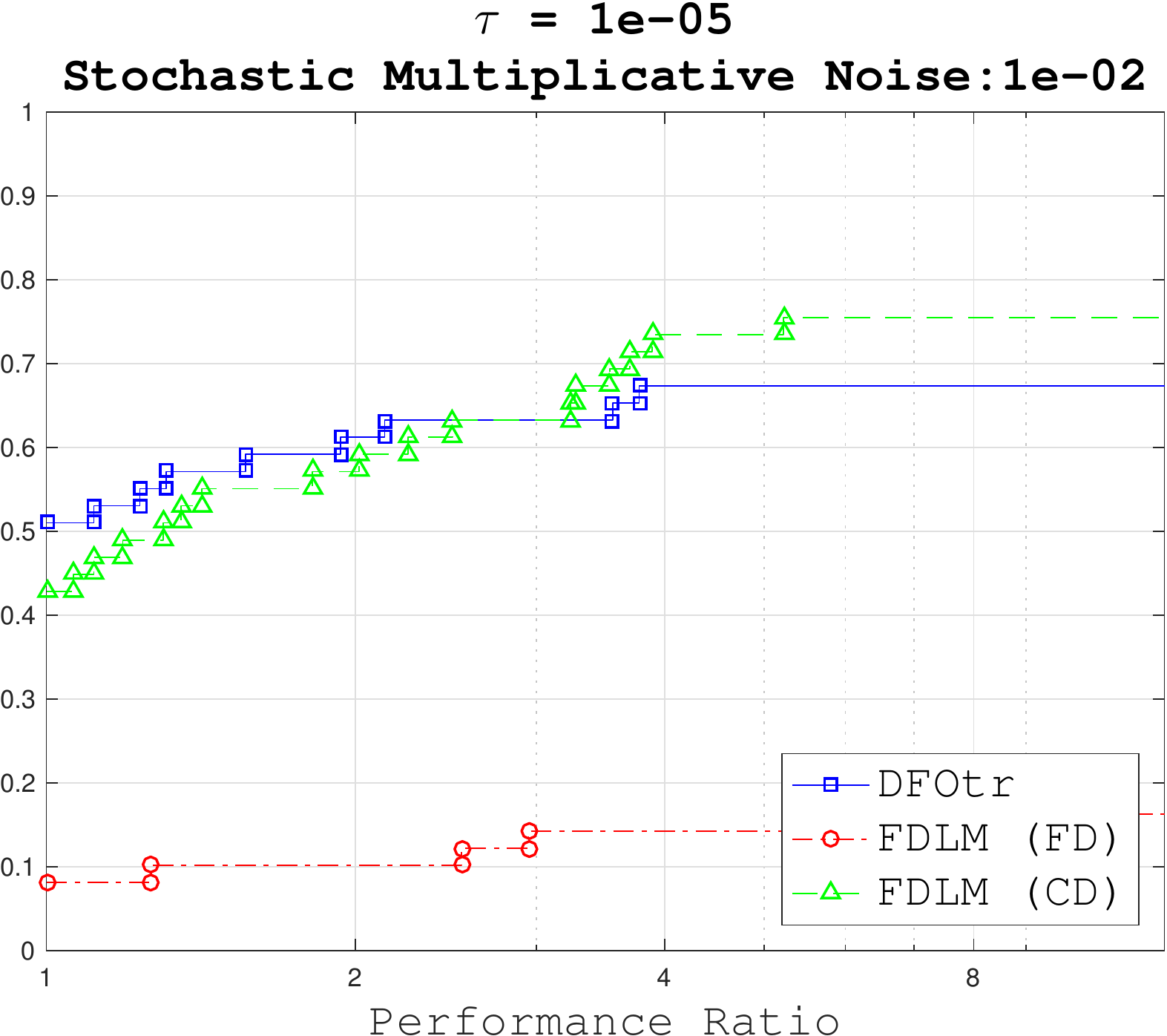}
\includegraphics[width=0.24\textwidth]{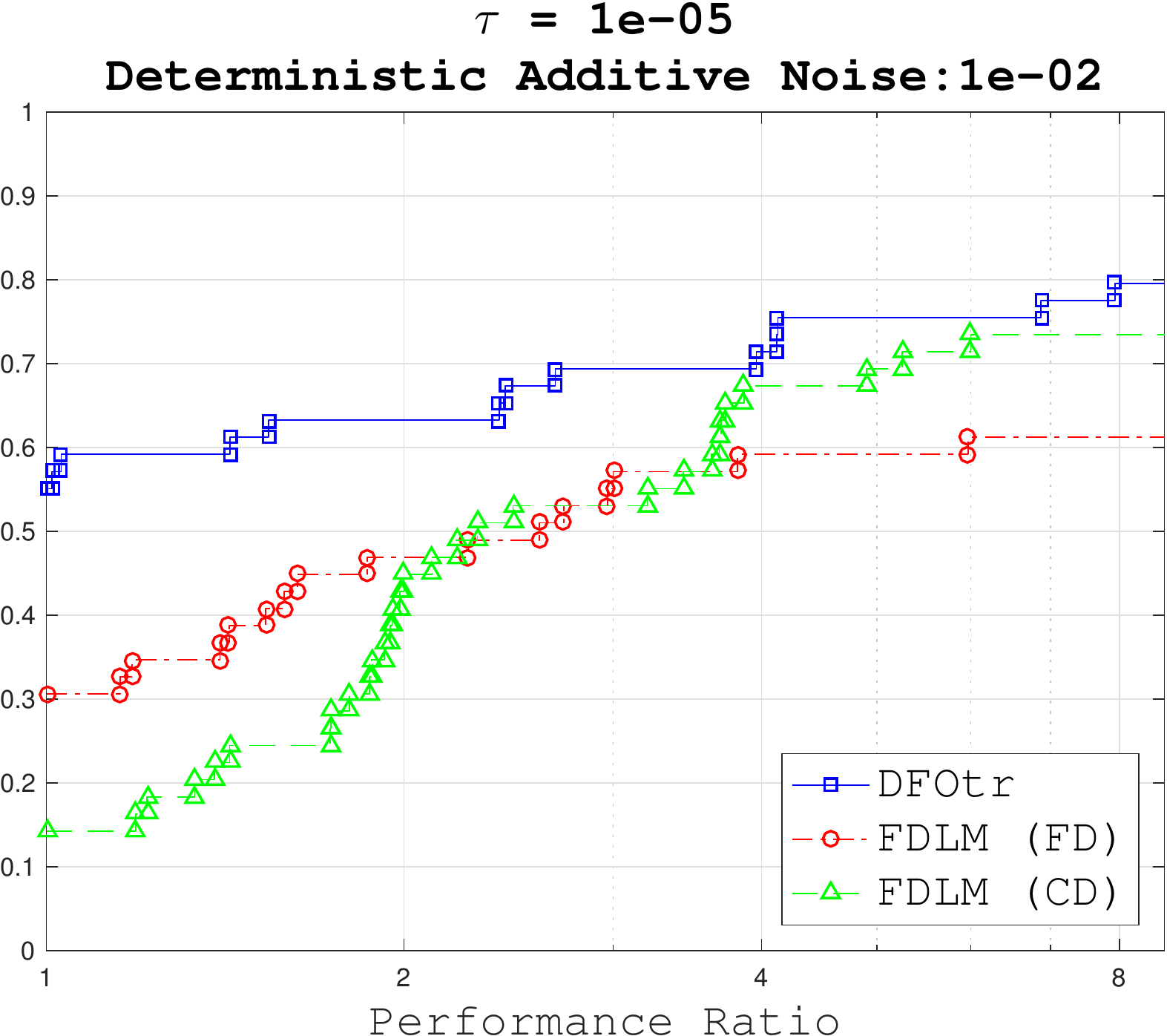}
\includegraphics[width=0.24\textwidth]{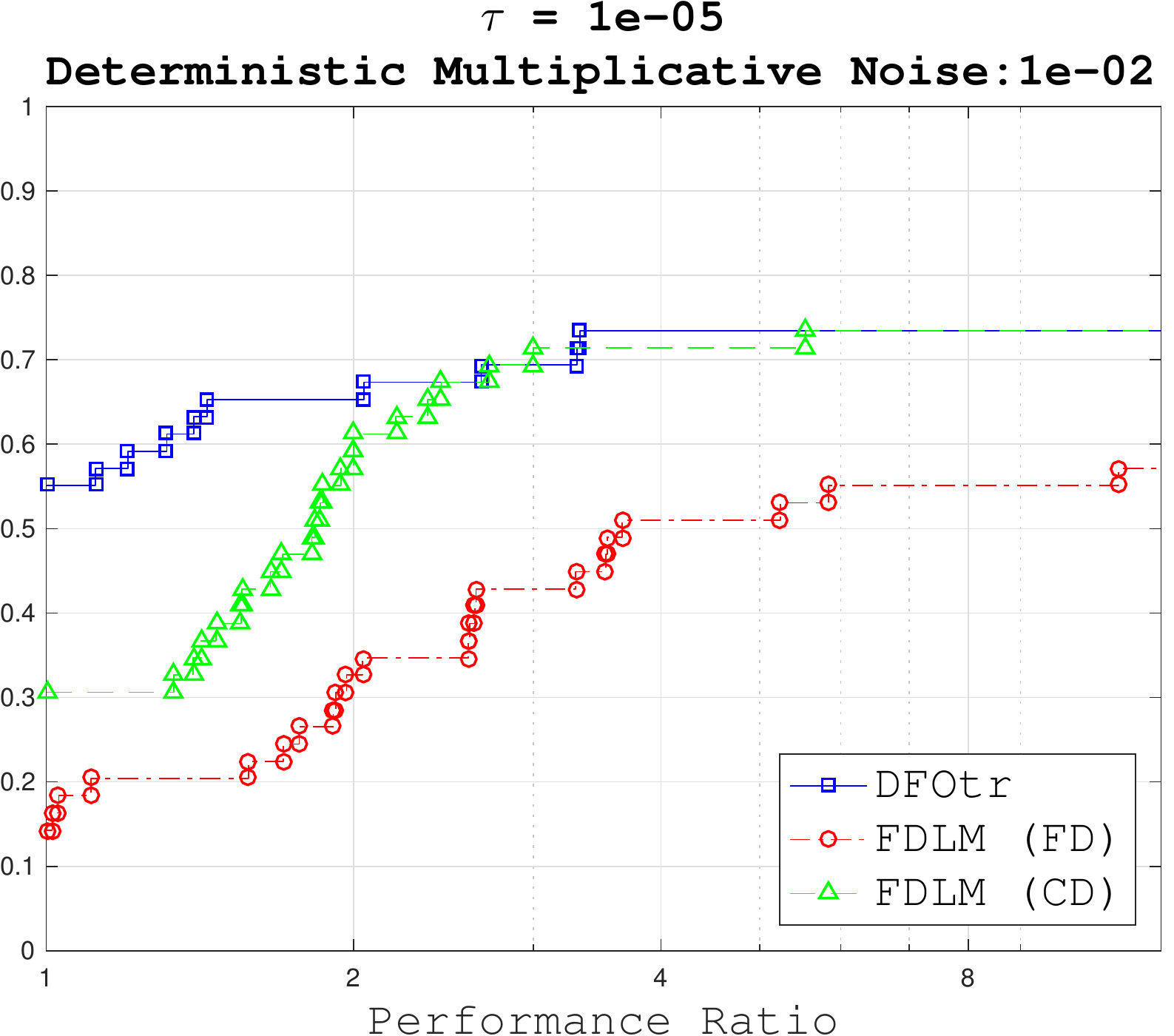}

\par\end{centering}
\caption{\small Performance Profiles ($\tau = 10^{-5}$) \cite{DolMor01}. Each column represents a different noise type and each row  different noise level. Row 1: Noise level $10^{-8}$; Row 2: Noise level $10^{-2}$. Column 1: Stochastic Additive Noise; Column 2: Stochastic Multiplicative Noise; Column 3: Deterministic Additive Noise; Column 4: Deterministic Multiplicative Noise.}
\label{perf_profs}
\end{figure}

\paragraph{Observations} Even though there is some variability, our tests indicate that overall the performance of the FI  and  FDLM methods is roughly comparable in terms of function evaluations. 
%Therefore, the FDLM method appears to be an effective approach for noisy derivative-free optimization. 
Contrary to conventional wisdom, the high per-iteration cost of finite differences  is offset by the faster convergence of the FDLM method, and the potential instability of finite differences is generally not harmful in our tests.  As expected, forward differences are more efficient for low levels of noise, and central differences give rise to a more robust algorithm for high noise levels (e.g., $10^{-2}$); this  suggests that using even higher order gradient approximations would be useful for more difficult problems.

\subsection{The Recovery Mechanism}
 We now investigate if the \texttt{Recovery} mechanism improves the robustness of our approach. We ran the FDLM method with and without the \texttt{Recovery} procedure; the latter amounts to terminating as soon as the \texttt{LineSearch} procedure fails (Line 11, Algorithm~\ref{alg1}). In Figure \ref{rec_exp} we present performance profiles for problems with stochastic additive and multiplicative noise for two noise levels ($10^{-8}$ and $10^{-2}$); see Appendix \ref{data_profs_rec} for data profiles. As shown, the performance of the method deteriorates substantially when the \texttt{Recovery} procedure is not used. 
 
\begin{figure}[htp]
\begin{centering}

\includegraphics[width=0.24\textwidth]{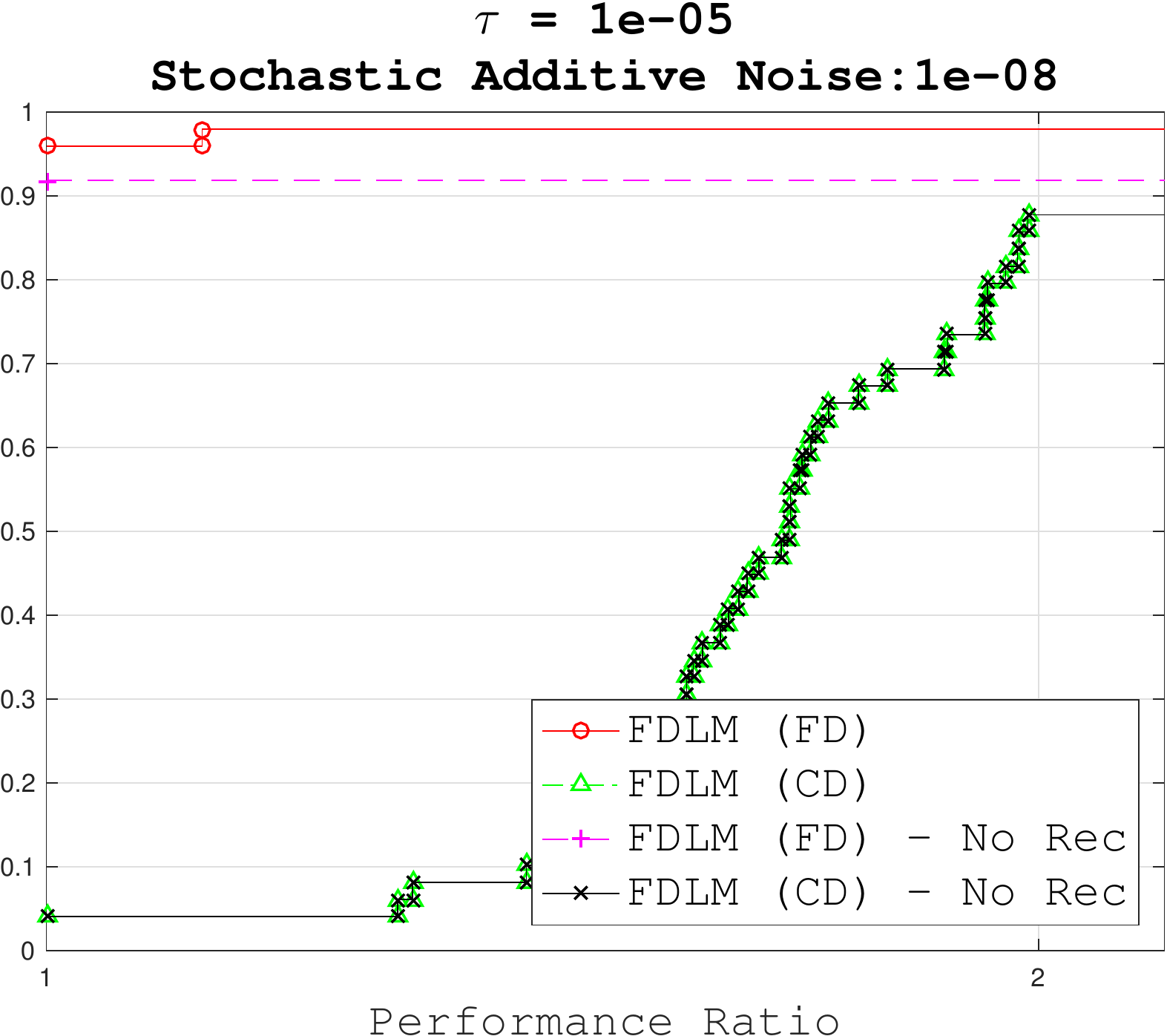}
\includegraphics[width=0.24\textwidth]{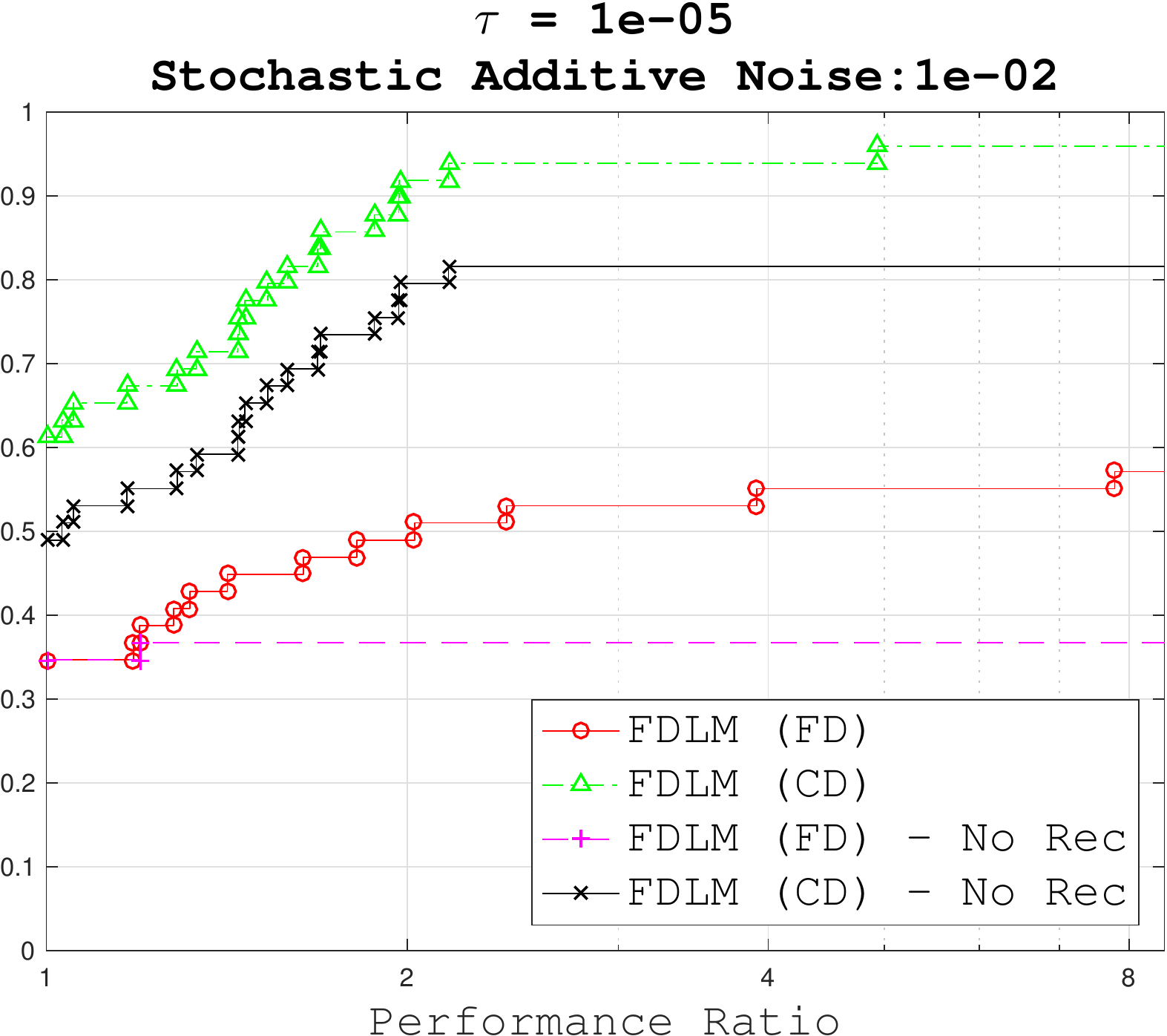}
\includegraphics[width=0.24\textwidth]{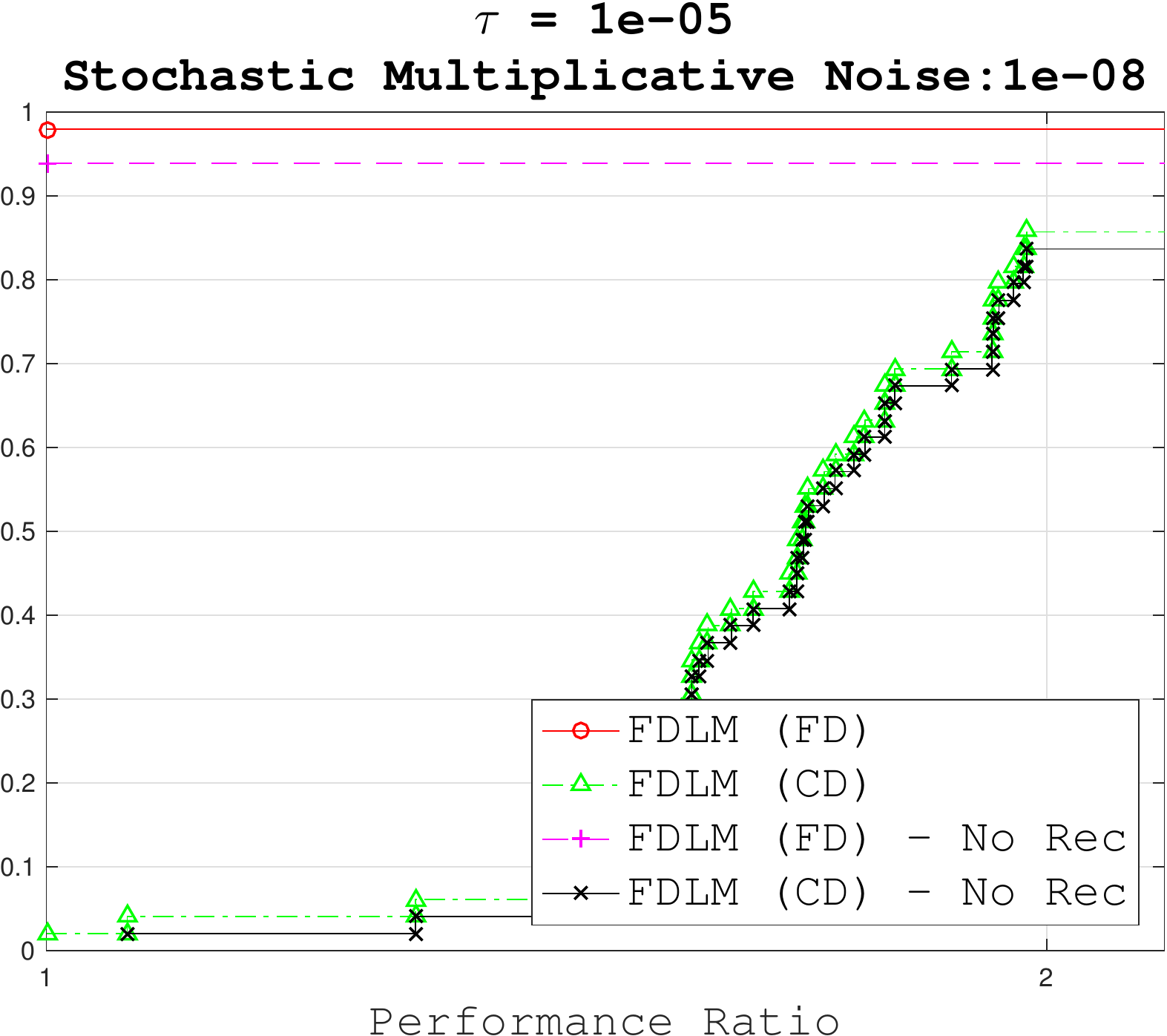}
\includegraphics[width=0.24\textwidth]{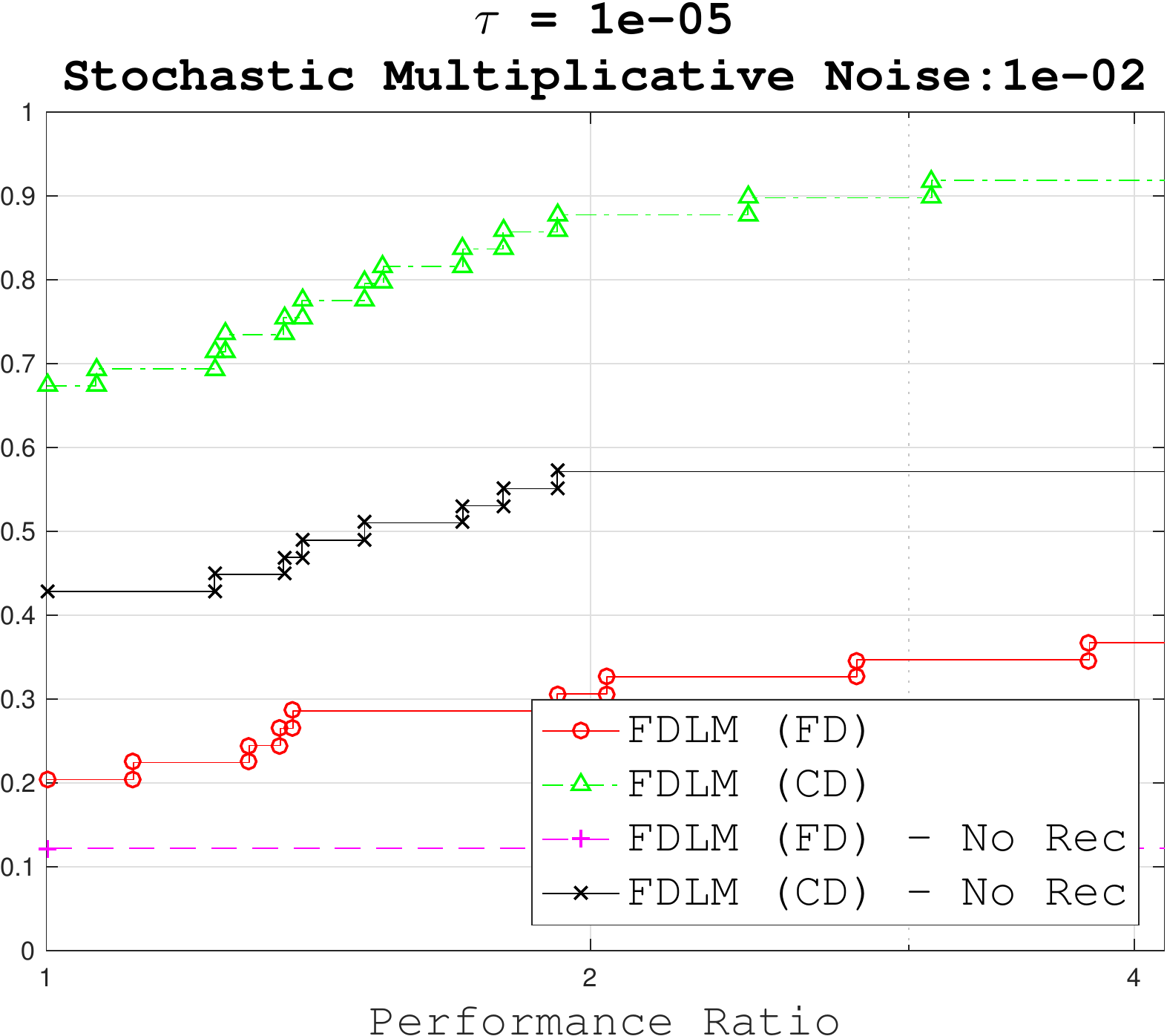}

\par\end{centering}
\caption{\small Performance of the FDLM method using forward and central differences with and without the \texttt{Recovery} procedure. The figure plots performance profiles for stochastic additive and stochastic multiplicative noise (noise levels $10^{-8}$, $10^{-2}$). The first two plots show results for stochastic additive noise, and the last two plots show results for stochastic multiplicative noise.}
\label{rec_exp}
\end{figure}

Our complete set of experiments show that the \texttt{Recovery} procedure is invoked more often when the noise level is high or when forward differences are employed, as expected. It plays an important role for problems with multiplicative noise where the noise level changes during the course of the iteration and our approach relies on the \texttt{Recovery} mechanism to adjust the finite difference interval. For some problems, the \texttt{Recovery} procedure is never invoked, but in the most difficult cases it plays a crucial role.

We also observe (see e.g., Figures~\ref{stoch_add_exp}-\ref{stoch_mult_exp})  that the FDLM method with forward differences  is more efficient than the central difference variant in the initial stages of the optimization, but the latter is able to achieve more accurate solutions within the given budget. It is therefore natural to consider a method that starts with forward differences and switches to central differences. How to do this in an algorithmically sound way remains a topic of investigation. 

%
%In our framework we  employ the \texttt{Recovery} procedure to decide when to make the transition, which occurs the first time that \texttt{Recovery} reaches Case 2, 3, 4 or 5  of Algorithm \ref{recover}.  Figure~\ref{hybrid_exp} reports the results on 4 problems were the benefit of this approach is apparent. Nevertheless, the design of a well-crafted algorithm that uses mixed precision is complex and will be addressed in a future study.
%

%\begin{figure}[]
%\begin{centering}
%
%\includegraphics[width=0.24\textwidth]{Figures/Hybrid/Stoch_add/s208_1e-08.pdf}
%\includegraphics[width=0.24\textwidth]{Figures/Hybrid/Stoch_add/s246_1e-06.pdf}
%\includegraphics[width=0.24\textwidth]{Figures/Hybrid/Stoch_mult/s241_1e-02.pdf}
%\includegraphics[width=0.24\textwidth]{Figures/Hybrid/Stoch_mult/s267_1e-02.pdf}
%
%\par\end{centering}
%\caption{\small Performance of a hybrid variant of the FDLM method \ab{that transitions from forward to central differences on 4 problems from the Hock-Schittkowki collection \cite{schttfkowski1987more}}.}%is allowed to switch from forward to central differences. }
%\label{hybrid_exp}
%\end{figure}

%%%%%%%%%%%%%%%%%%%%%
\section{Final Remarks}
\label{finalr}
%%%%%%%%%%%%%%%%%%%%% 
We presented a finite difference quasi-Newton method for the optimization of noisy black-box functions. It relies on the observation that when the level of noise (i.e., its standard deviation) can be estimated,  it is possible to choose finite difference intervals that yield reliable forward or central difference approximations to the gradient. To estimate the noise level, we employ the {\tt ECnoise}  procedure of Mor\'e and Wild \cite{more2011estimating}. Since this procedure may not always be reliable, or since the noise level may change during the course of the minimization, our algorithm includes a {\tt Recovery} procedure that adjusts the finite difference interval, as needed. This procedure operates in conjunction with a backtracking Armijo line search.  

Our numerical experiments indicate that our approach is robust and efficient. Therefore, performing $\mathcal{O}(n)$ function evaluations at every iteration to estimate the gradient is not prohibitively expensive and has the advantage that the construction of the model of the objective can be delegated to the highly scalable L-BFGS updating technique.   
We present a  convergence analysis of our method using an Armijo-type backtracking line search that does not assume that the error in the function evaluations tends to zero.

\newpage

\appendix

\section{Extended Numerical Results}
\label{app:supp_num}

In this appendix, we present additional numerical results.

\subsection{Smooth Functions}
\label{extended_schitt_smooth}
Figure \ref{perf_smooth_app} shows performance profiles for different values of $\tau$ (see equation \eqref{conv_perf_profs}) and Figure \ref{data_smooth_app} shows data profiles \cite{more2009benchmarking} for different values of $\tau$.

%\begin{figure}[htp!]
%\begin{centering}
%
%\includegraphics[width=0.24\textwidth]{Figures/No_Noise/s212.pdf}
%\includegraphics[width=0.24\textwidth]{Figures/No_Noise/s240.pdf}
%\includegraphics[width=0.24\textwidth]{Figures/No_Noise/s291.pdf}
%
%\includegraphics[width=0.24\textwidth]{Figures/No_Noise/s298.pdf}
%\includegraphics[width=0.24\textwidth]{Figures/No_Noise/s305.pdf}
%\includegraphics[width=0.24\textwidth]{Figures/No_Noise/s309.pdf}
%
%\par\end{centering}
%\caption{Performance of the function interpolating trust region method (FI) implemented in \cite{conn2009introduction}, and the finite difference L-BFGS (FDLM) method using forward or central differences. The figure plots results for 6 problems from the Hock-Schittkowki collection \cite{schttfkowski1987more}.  \label{smooth_app}}
%\end{figure}
%
\begin{figure}[H]
\begin{centering}

\includegraphics[width=0.25\textwidth]{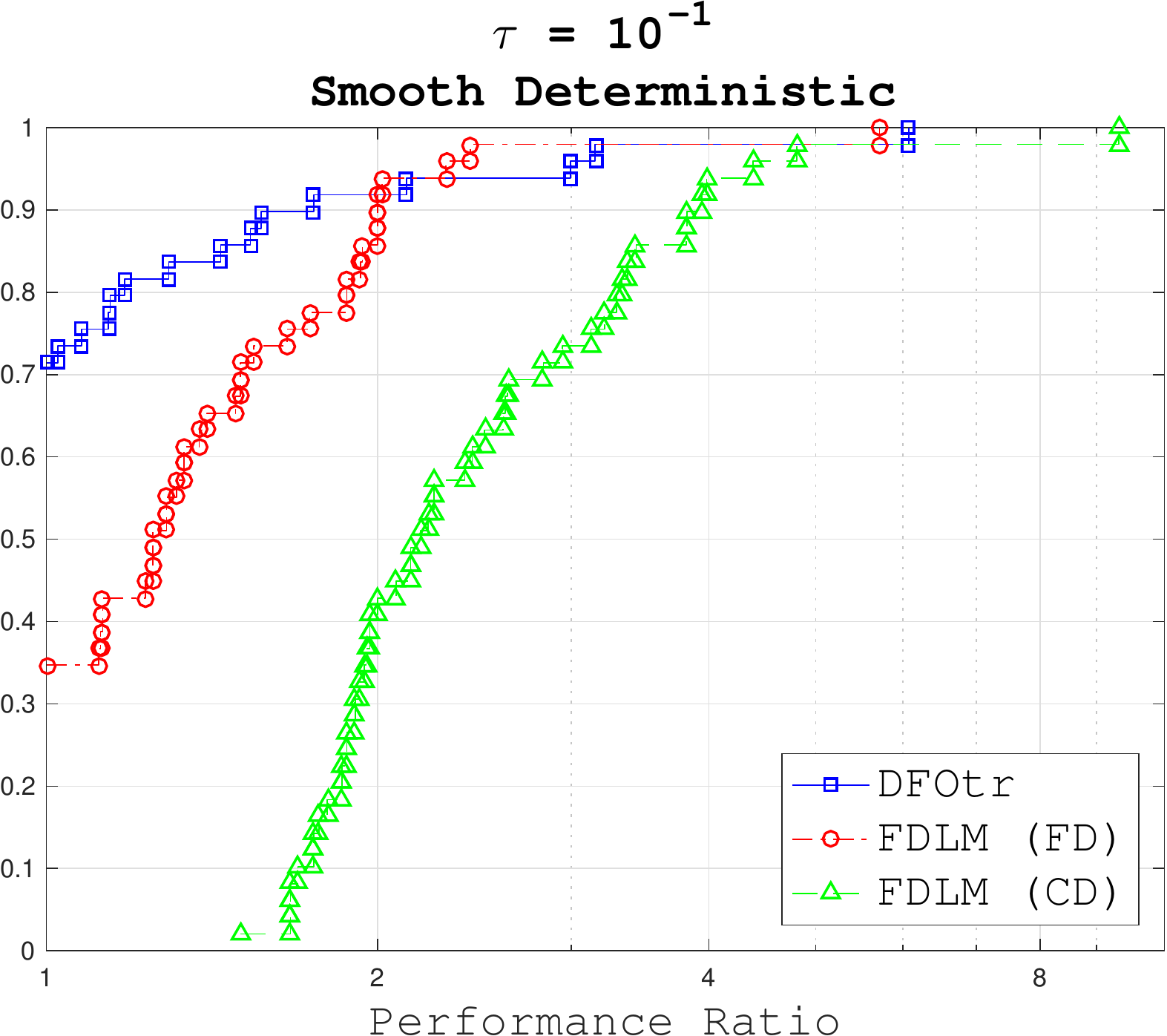}
\includegraphics[width=0.25\textwidth]{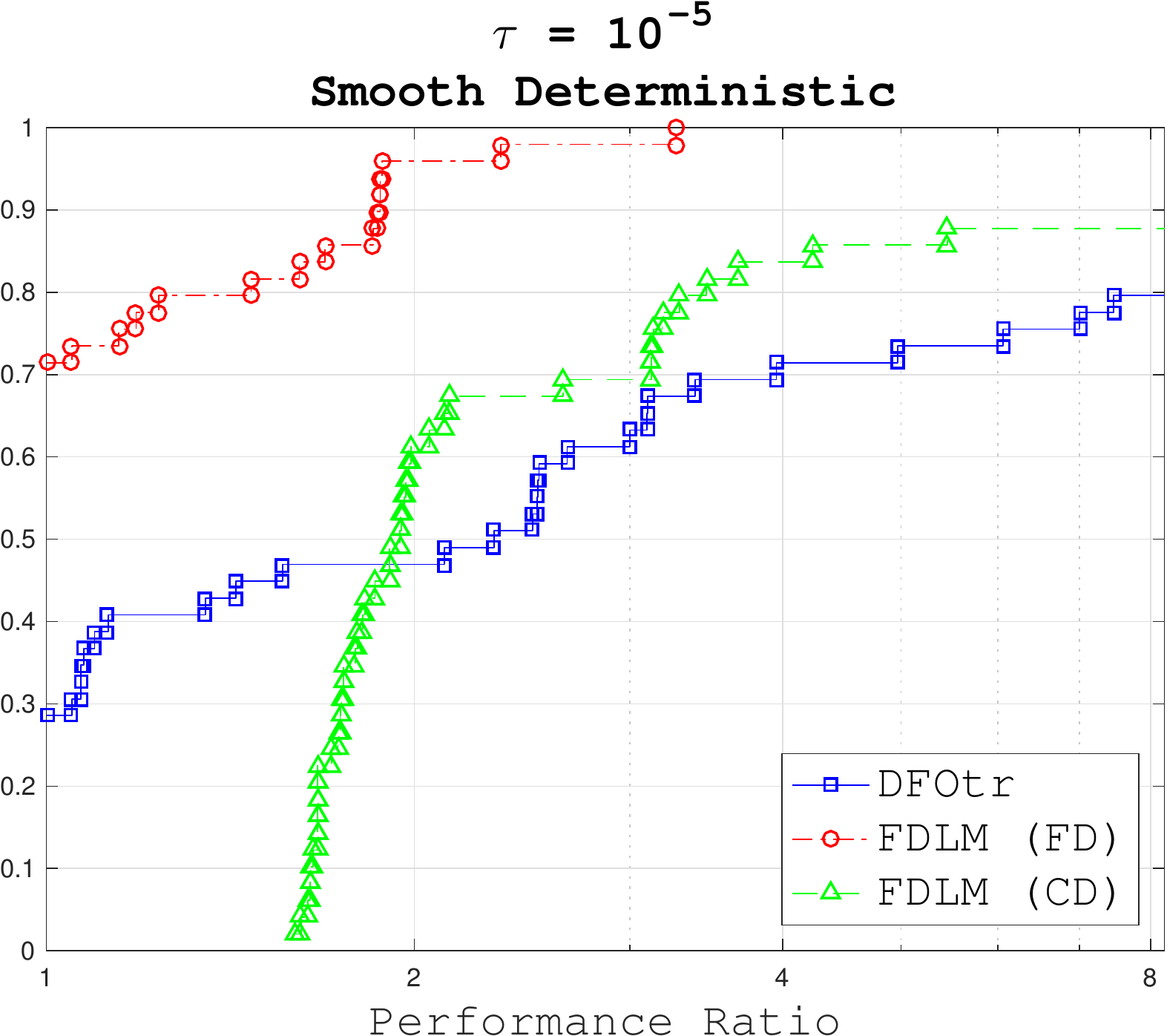}
\includegraphics[width=0.25\textwidth]{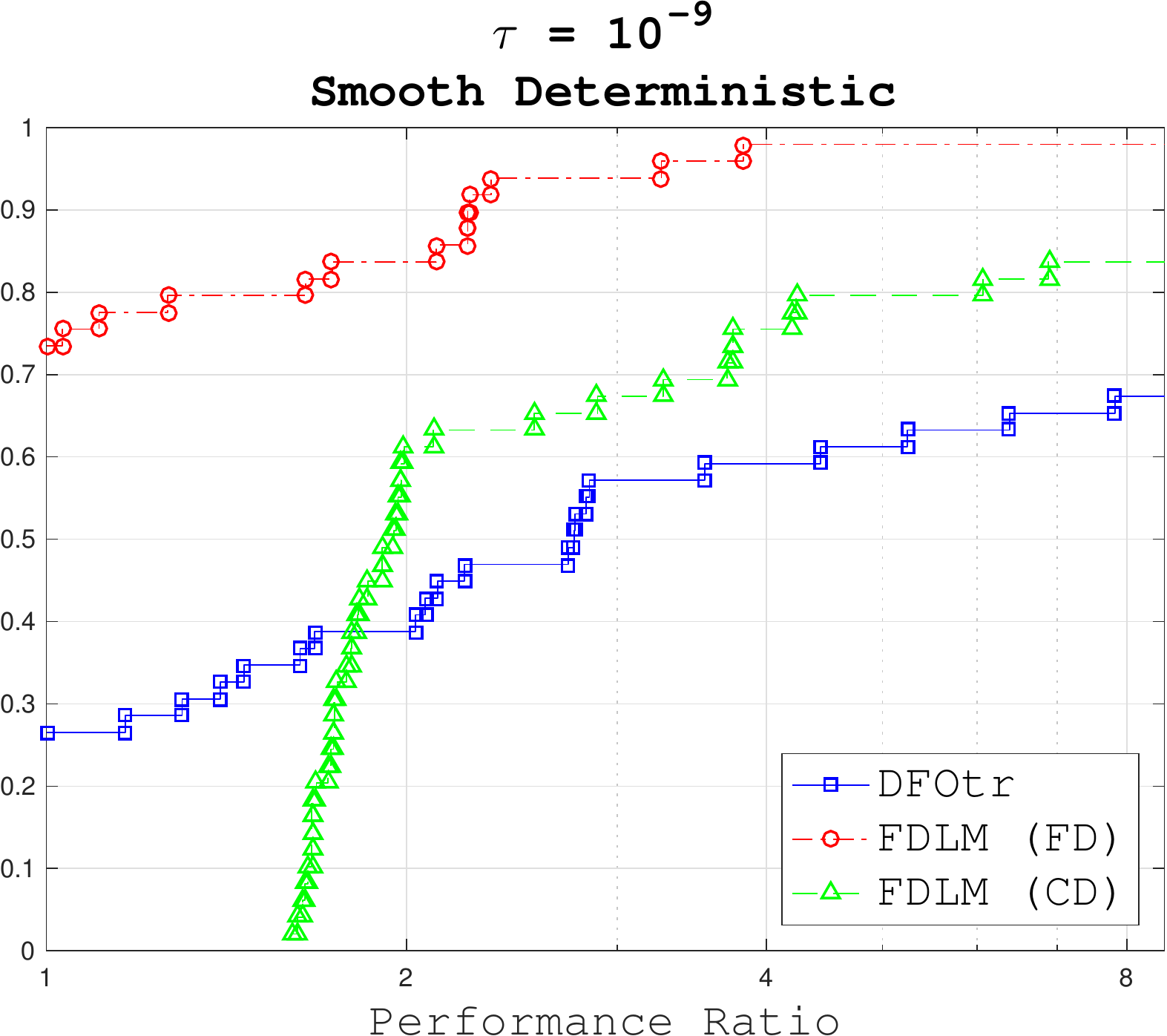}

\par\end{centering}
\caption{\small Performance profiles -- Smooth Deterministic Functions. Left: $\tau=10^{-1}$; Center: $\tau=10^{-5}$; Right: $\tau=10^{-9}$. \label{perf_smooth_app}}
\end{figure}

\begin{figure}[H]
\begin{centering}

\includegraphics[width=0.25\textwidth]{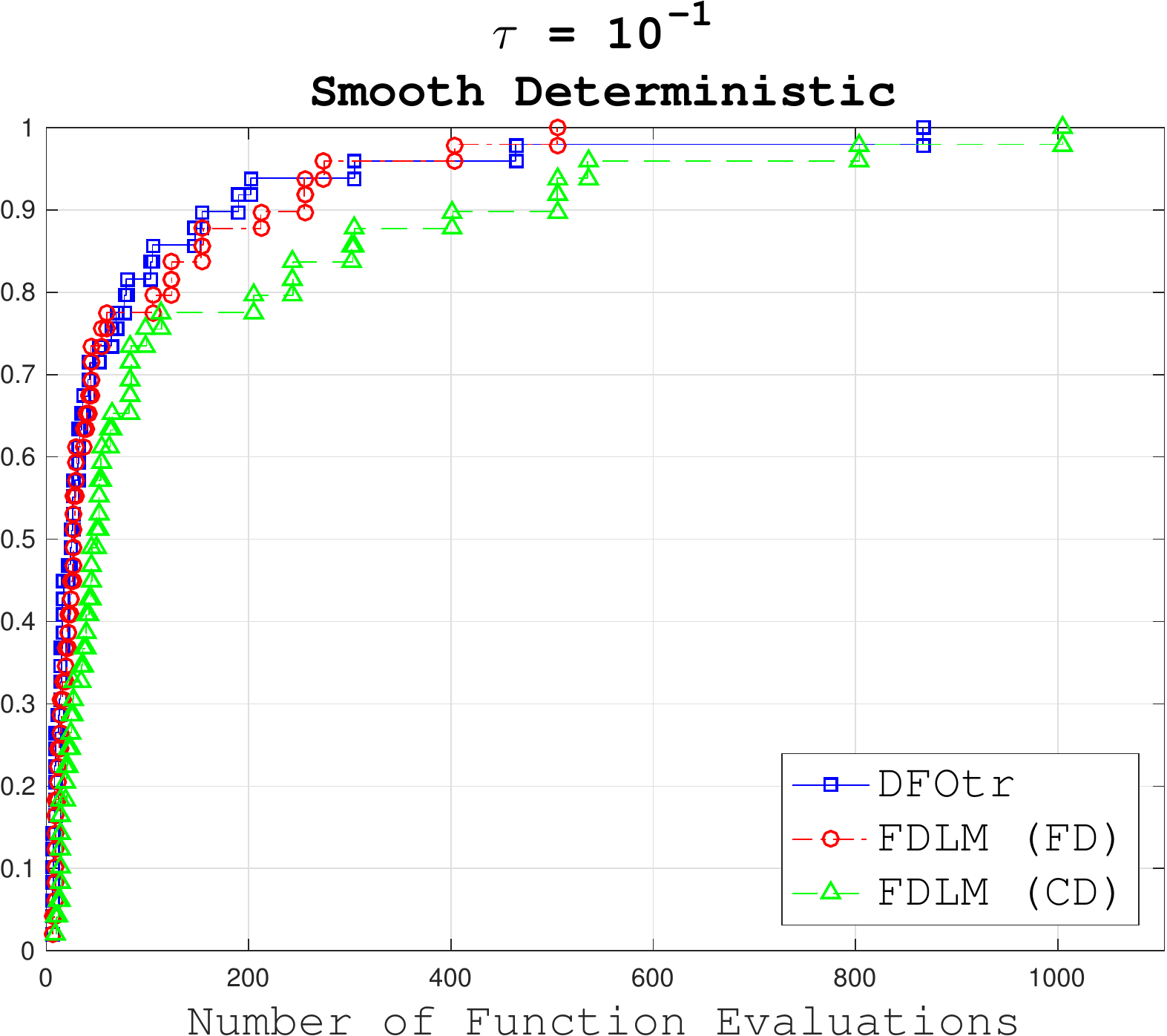}
\includegraphics[width=0.25\textwidth]{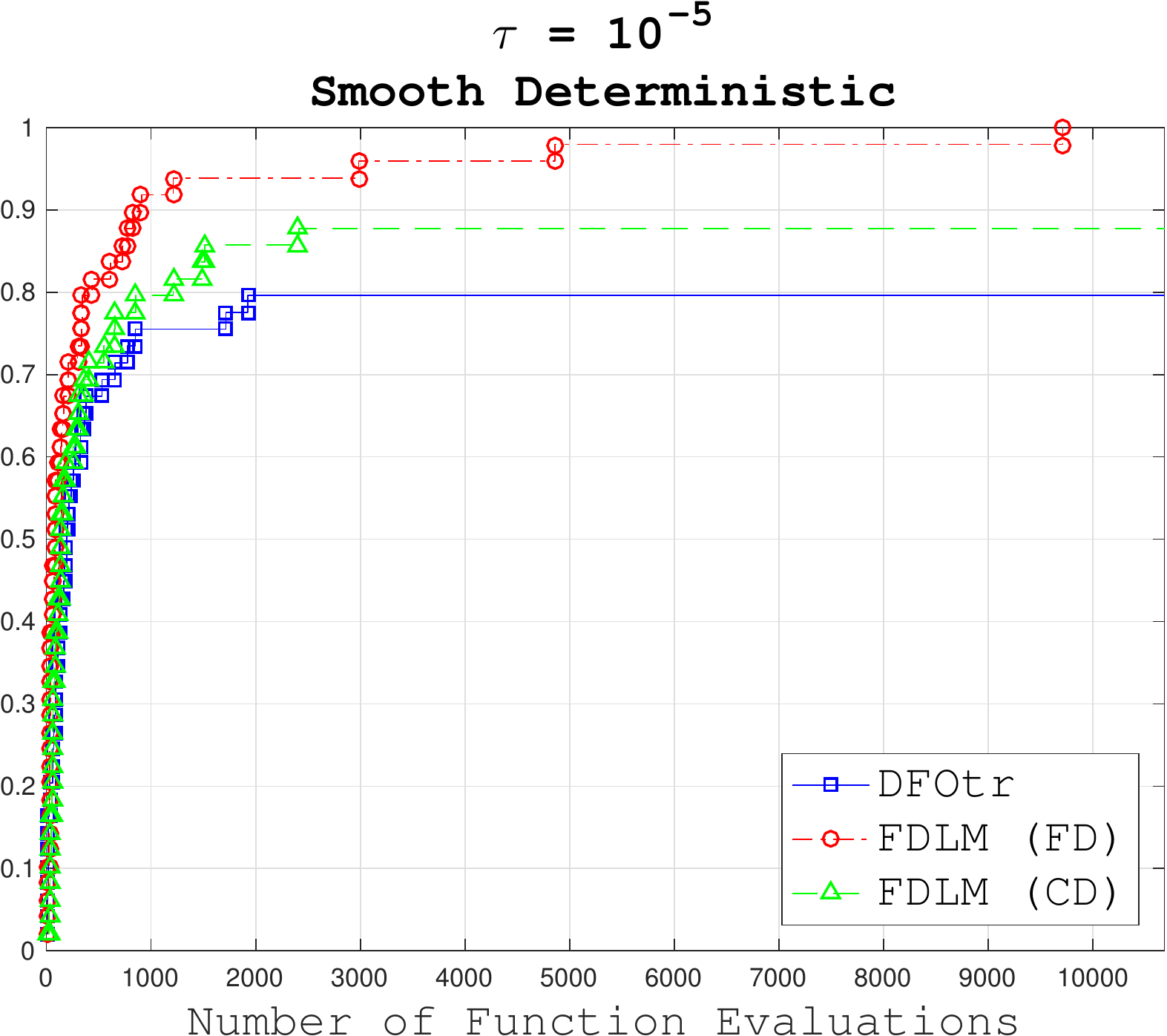}
\includegraphics[width=0.25\textwidth]{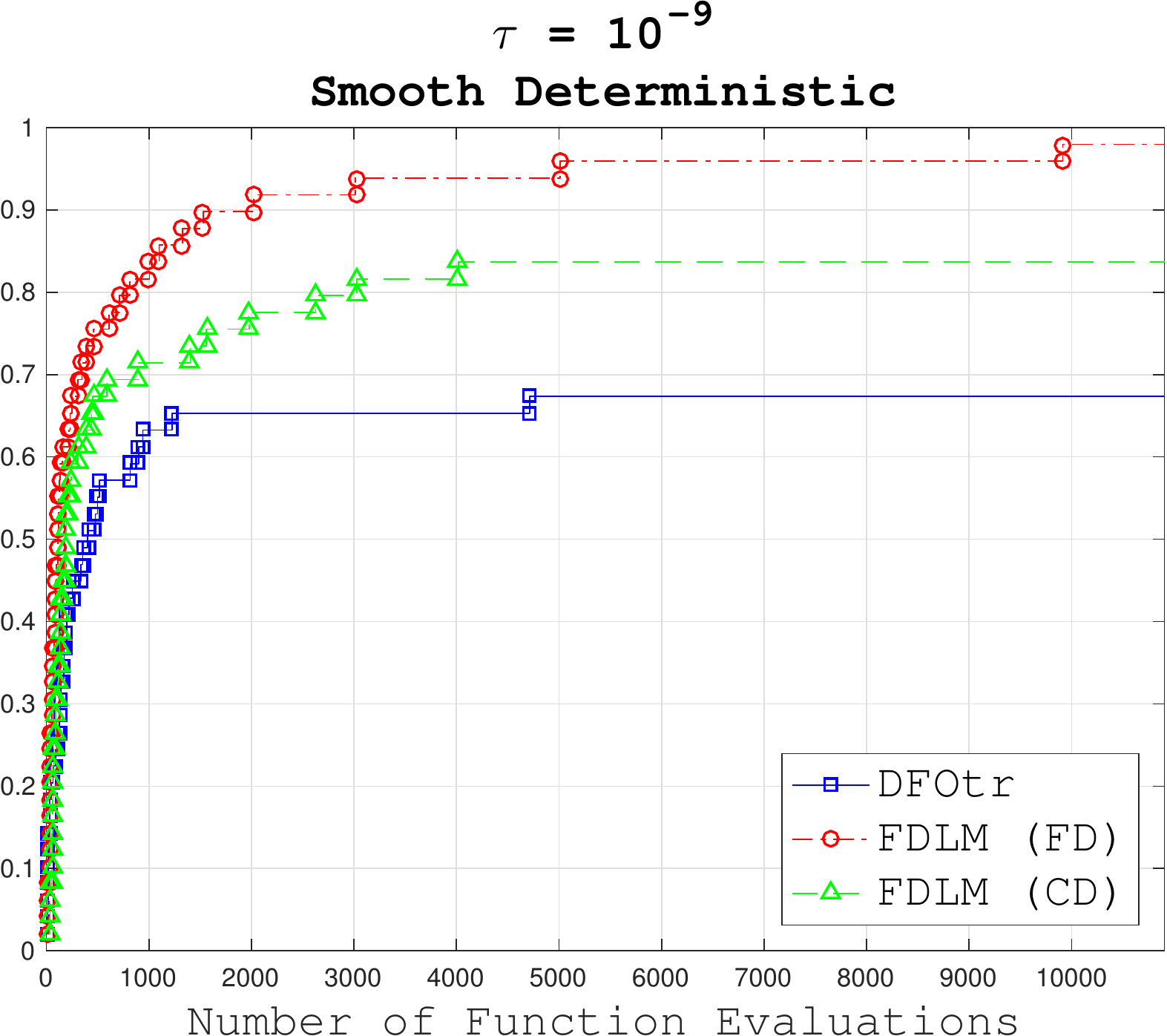}

\par\end{centering}
\caption{\small Data profiles -- Smooth Deterministic Functions. Left: $\tau=10^{-1}$; Center: $\tau=10^{-5}$; Right: $\tau=10^{-9}$. \label{data_smooth_app}}
\end{figure}

\subsection{Noisy Problems}
\label{ext_summ_noisy}

%%%%%%%%%%
\subsubsection{Generation of Deterministic Noise}
\label{det_noise}

 Deterministic noise was generated using the procedure described by Mor\'e and Wild \cite{more2009benchmarking}. Namely,
\begin{align*}		%\label{det_noise_eq}
	\epsilon(x) = \xi \psi(x),	
\end{align*}
where $\xi \in \{10^{-8}, 10^{-6}, 10^{-4}, 10^{-2} \}$, and $\psi: \mathbb{R}^n \rightarrow [-1,1]$ is  defined in terms of the cubic Chebyshev polynomial $T_3(\alpha) = \alpha(4\alpha^2-3)$, as follows:
\begin{gather*} 
	\psi(x) = T_3(\psi_0(x)), \ \ \mbox{where} \ \ 
	\psi_0(x) = 0.9\sin(100\| x\|_1)\cos(100\| x\|_{\infty}) + 0.1\cos(\|x\|_2). \label{cheb2}
\end{gather*}

\subsubsection{Data Profiles}
\label{data_profs}

We present data profiles for the problems described in Section \ref{numerical}.

\begin{figure}[h]
\begin{centering}

\includegraphics[width=0.24\textwidth]{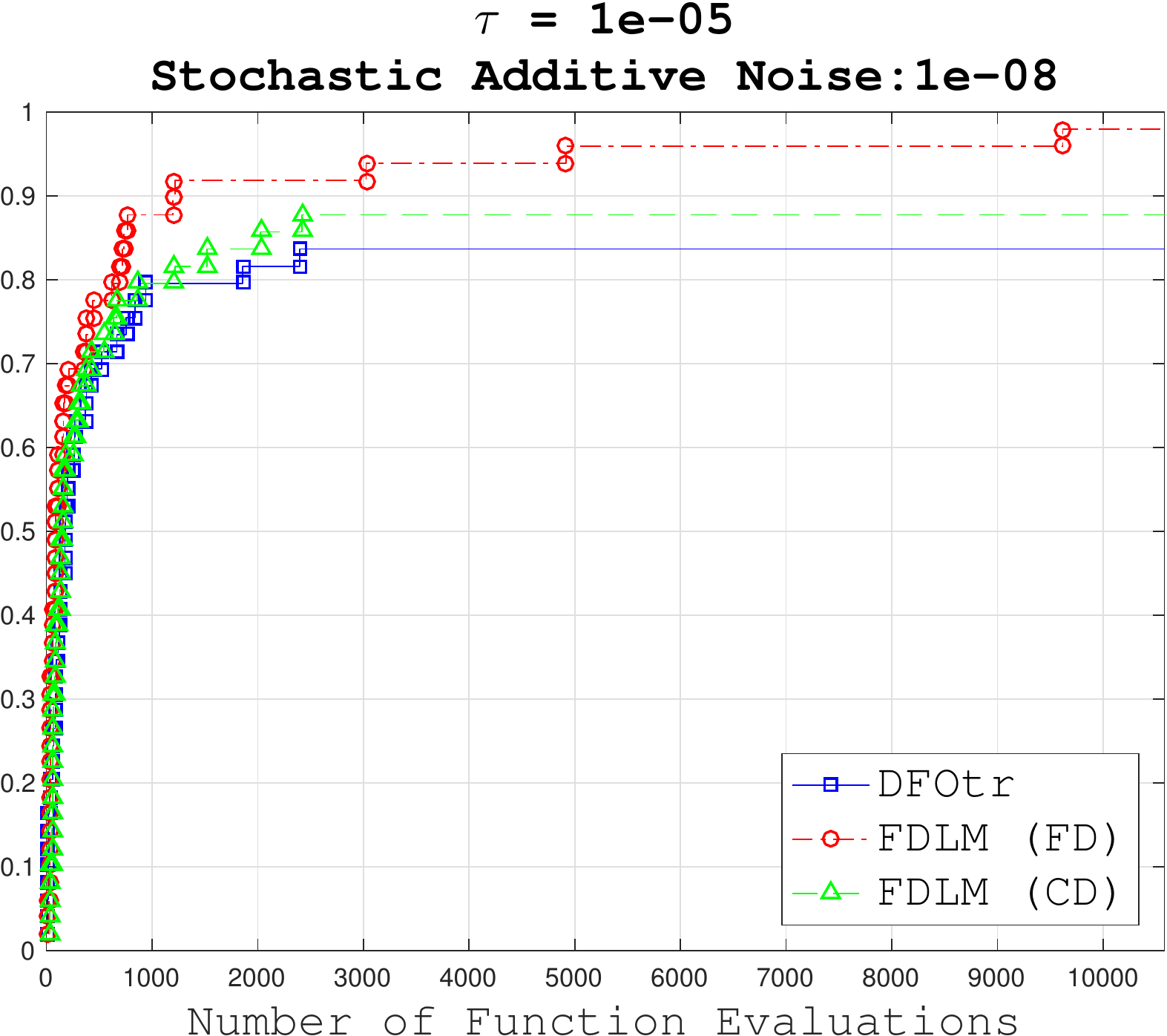}
\includegraphics[width=0.24\textwidth]{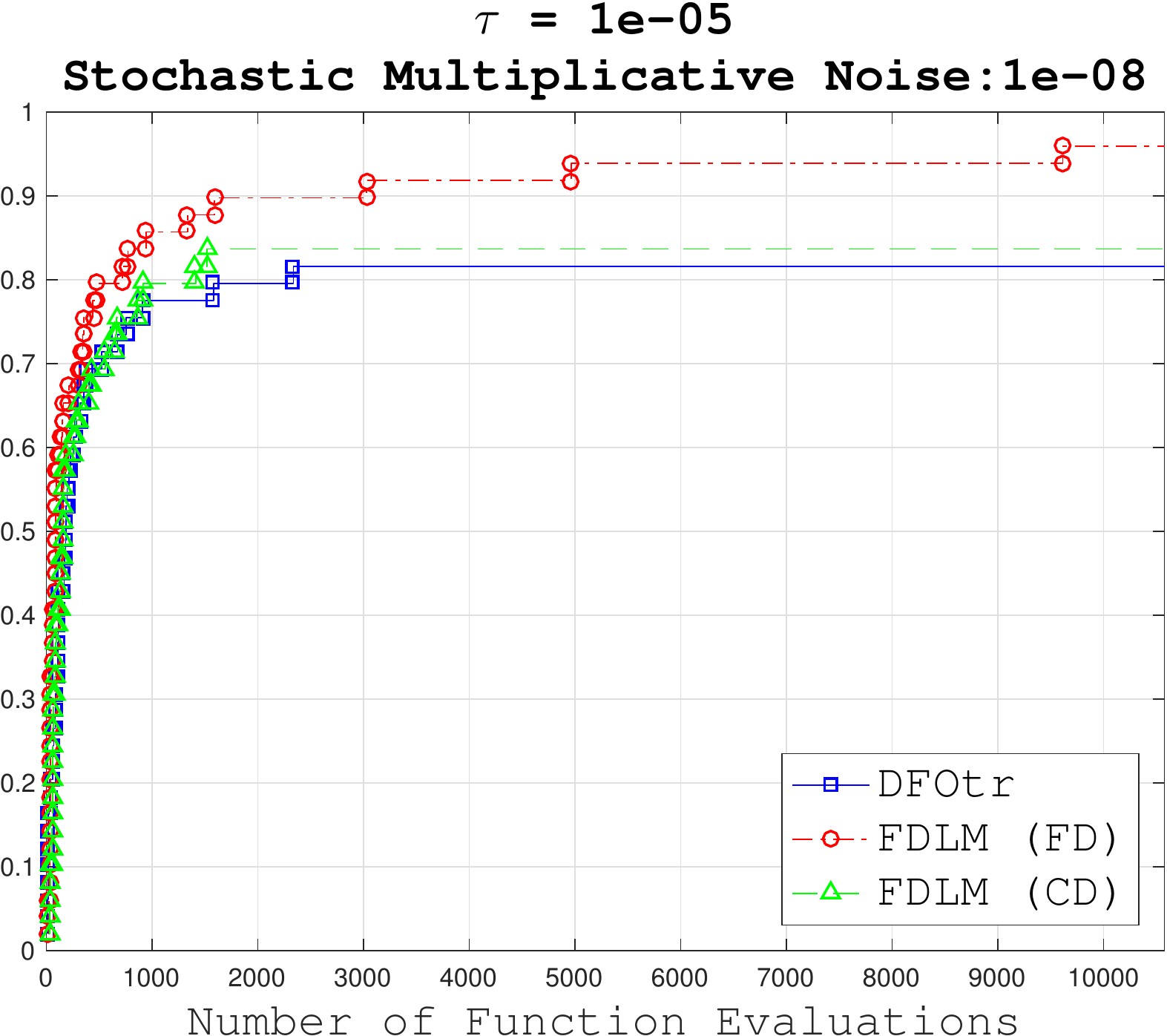}
\includegraphics[width=0.24\textwidth]{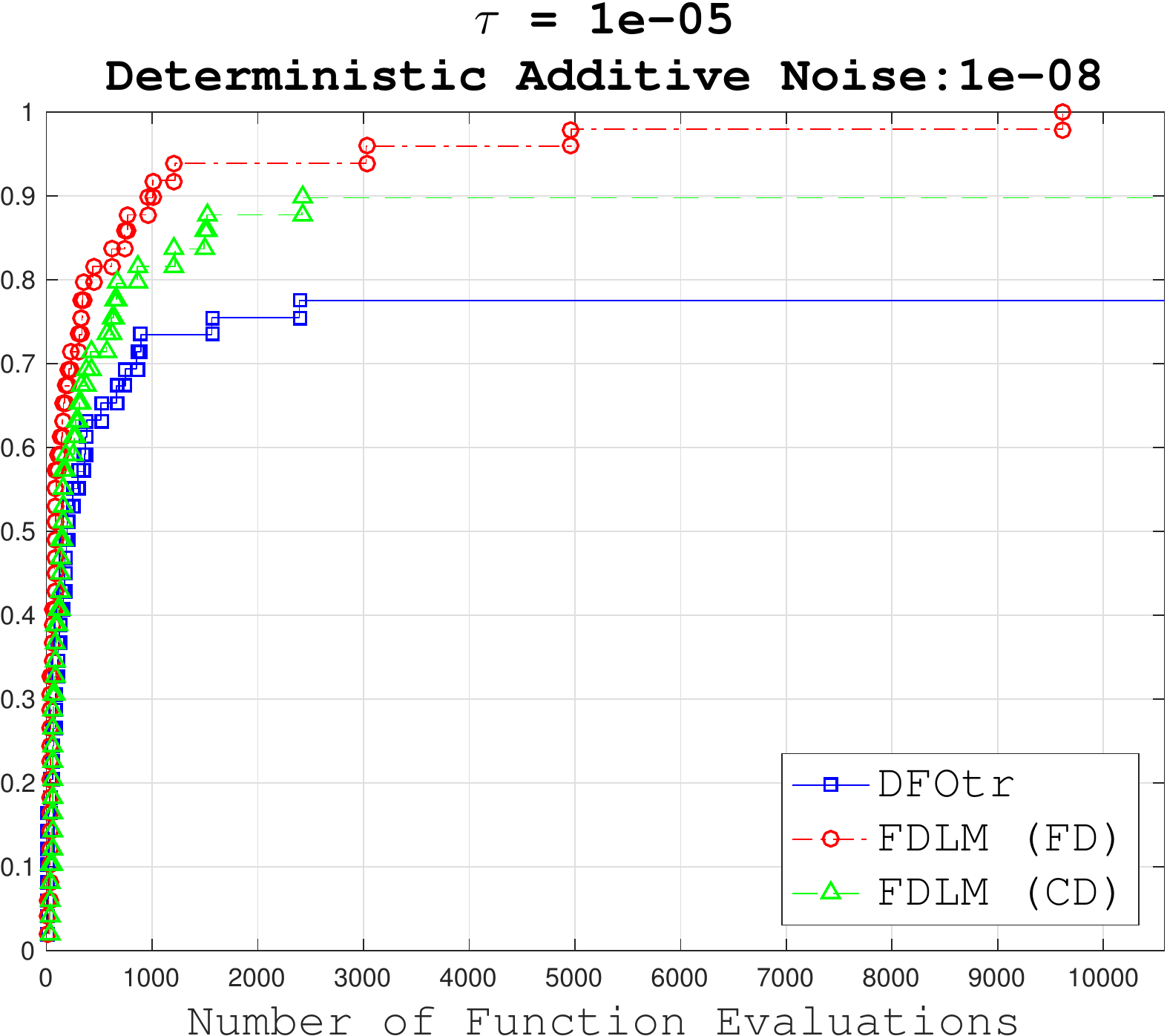}
\includegraphics[width=0.24\textwidth]{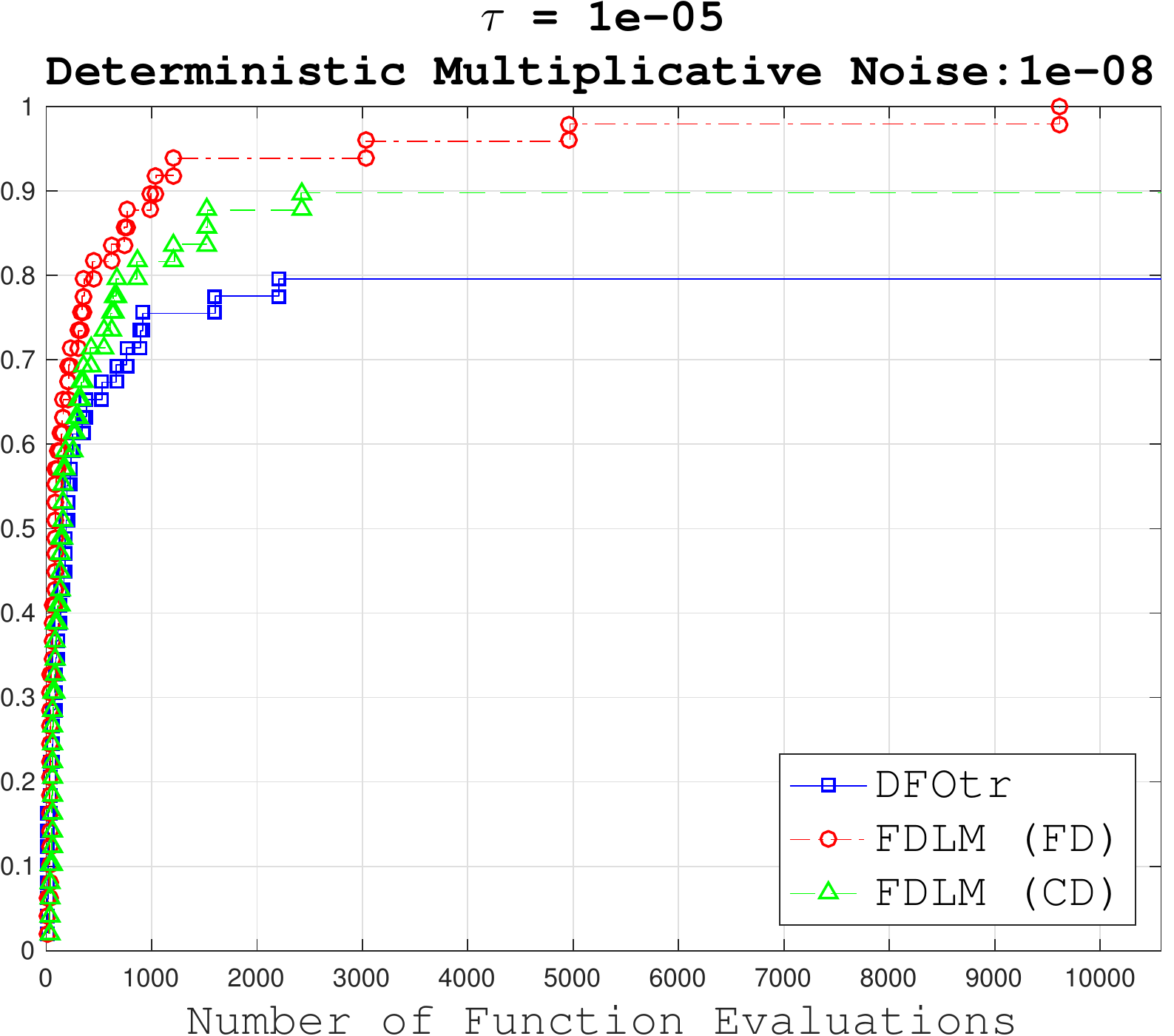}

\vspace{0.2cm}

\includegraphics[width=0.24\textwidth]{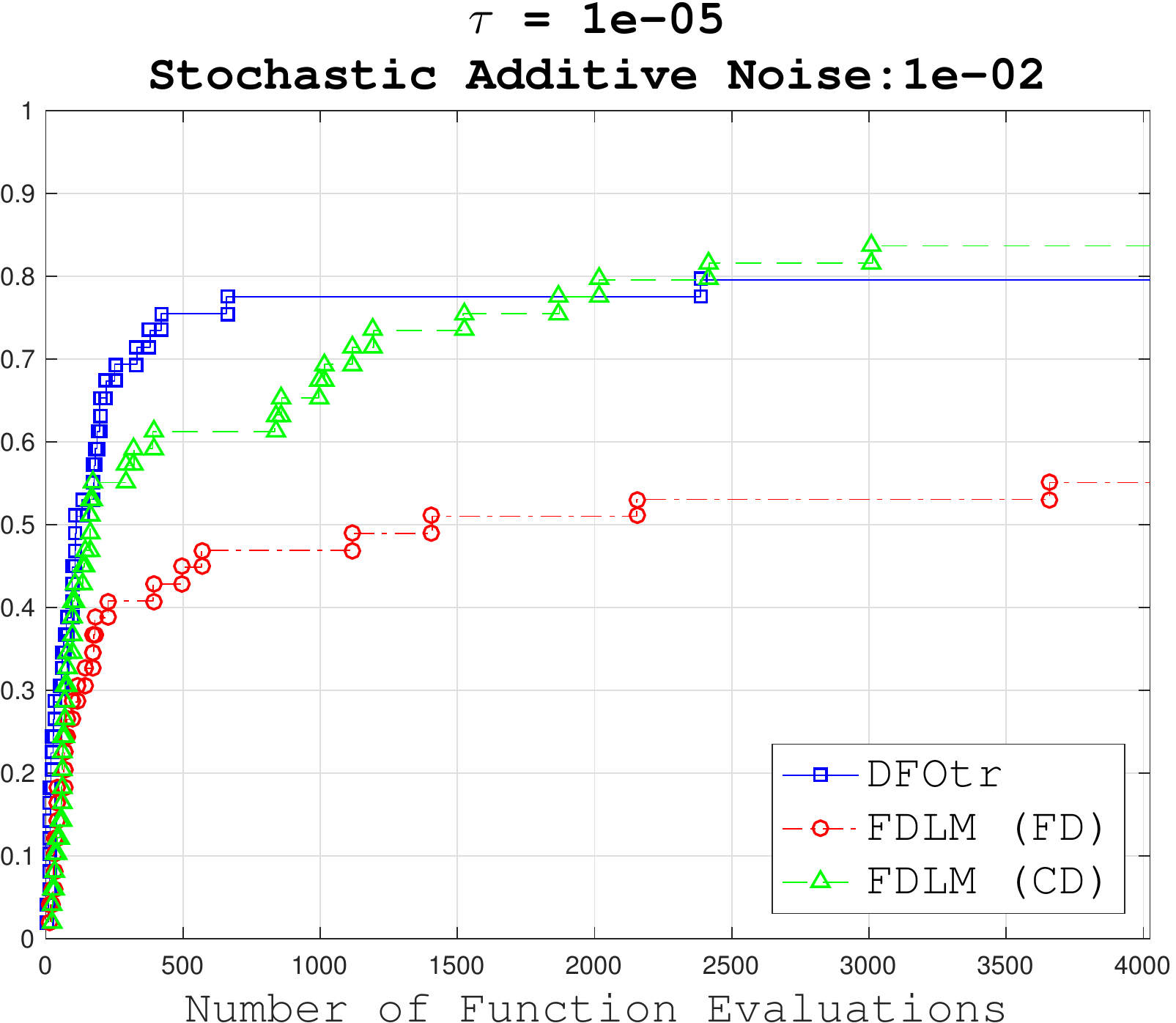}
\includegraphics[width=0.24\textwidth]{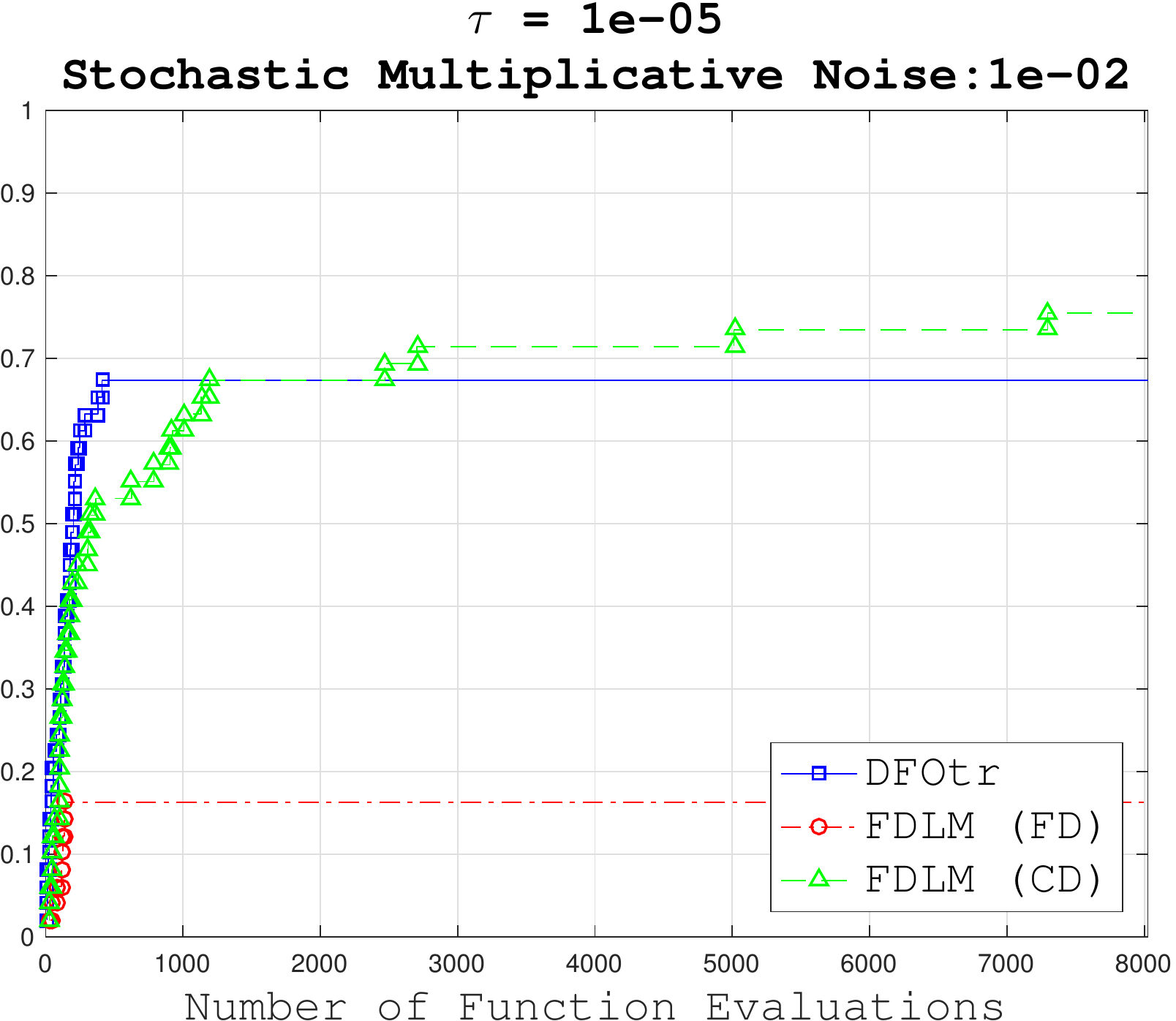}
\includegraphics[width=0.24\textwidth]{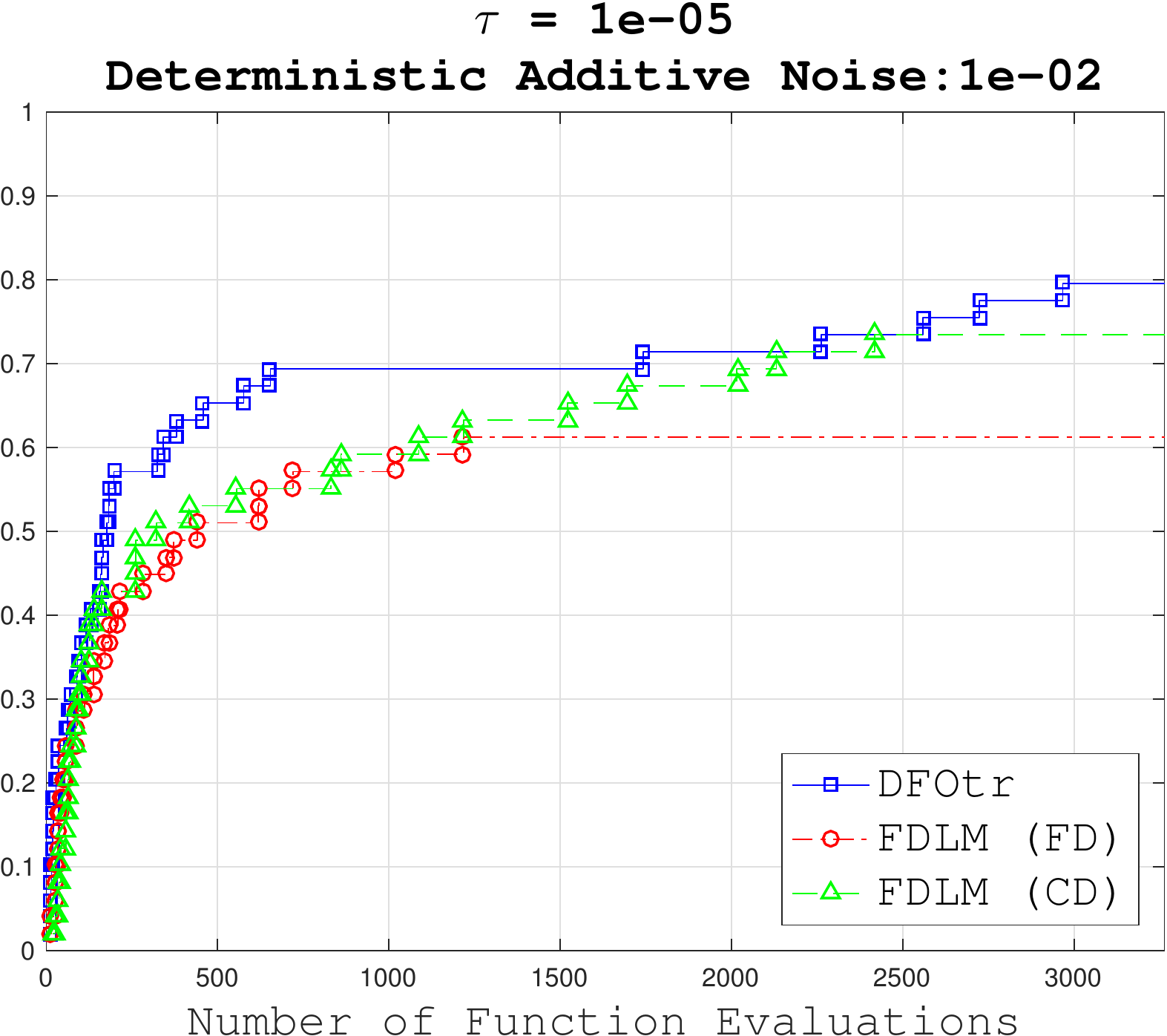}
\includegraphics[width=0.24\textwidth]{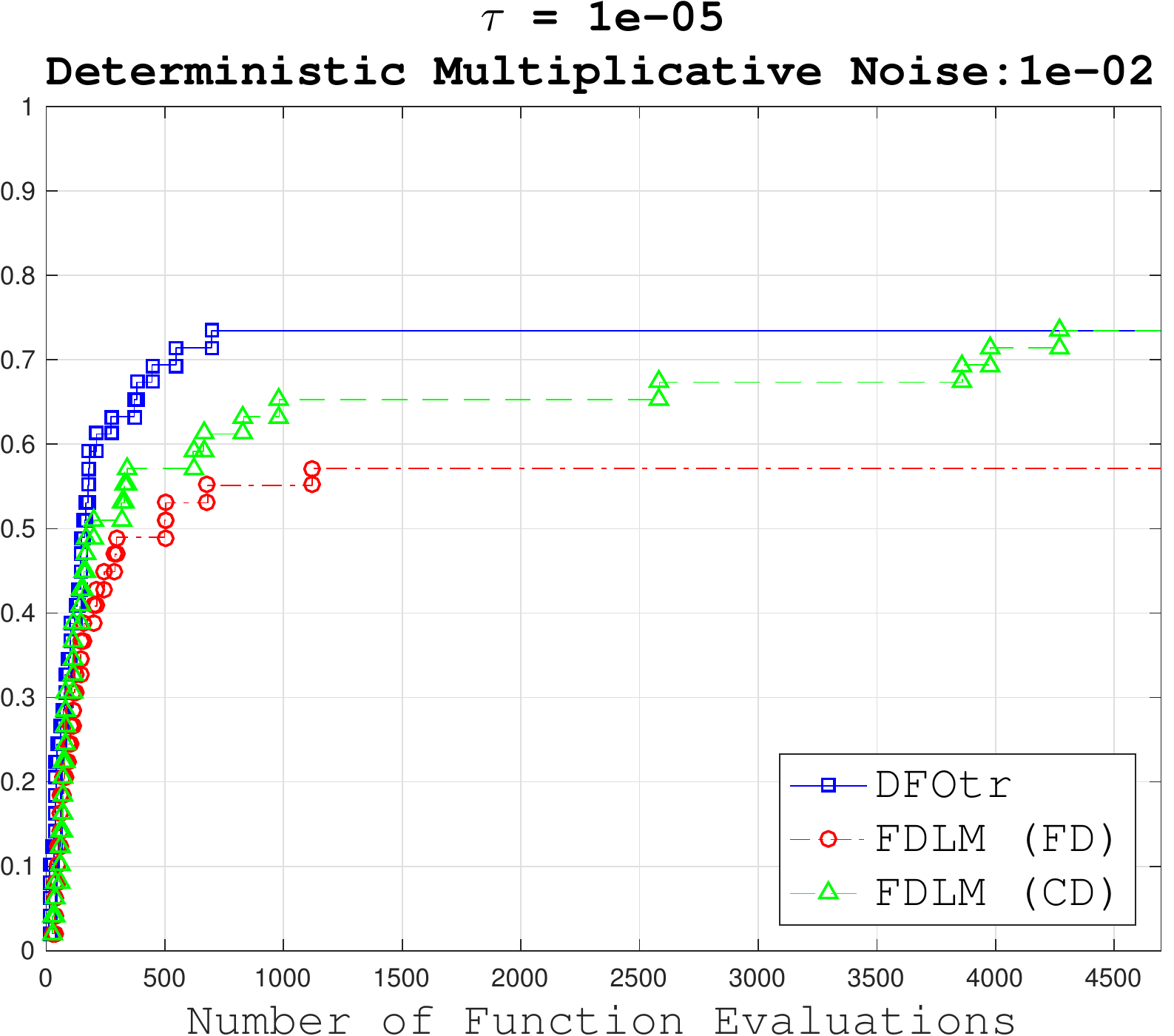}

\par\end{centering}
\caption{\small Data Profiles ($\tau = 10^{-5}$) \cite{DolMor01}. Each column represents a different noise type and each row  different noise level. Row 1: Noise level $10^{-8}$; Row 2: Noise level $10^{-2}$. Column 1: Stochastic Additive Noise; Column 2: Stochastic Multiplicative Noise; Column 3: Deterministic Additive Noise; Column 4: Deterministic Multiplicative Noise.}
\label{fig:data_profs}
\end{figure}

\subsubsection{Performance of the Recovery Mechanism}
\label{data_profs_rec}

We present data profiles to illustrate the performance of the FDLM method with and without the  \texttt{Recovery} mechanism.

\begin{figure}[h]
\begin{centering}

\includegraphics[width=0.24\textwidth]{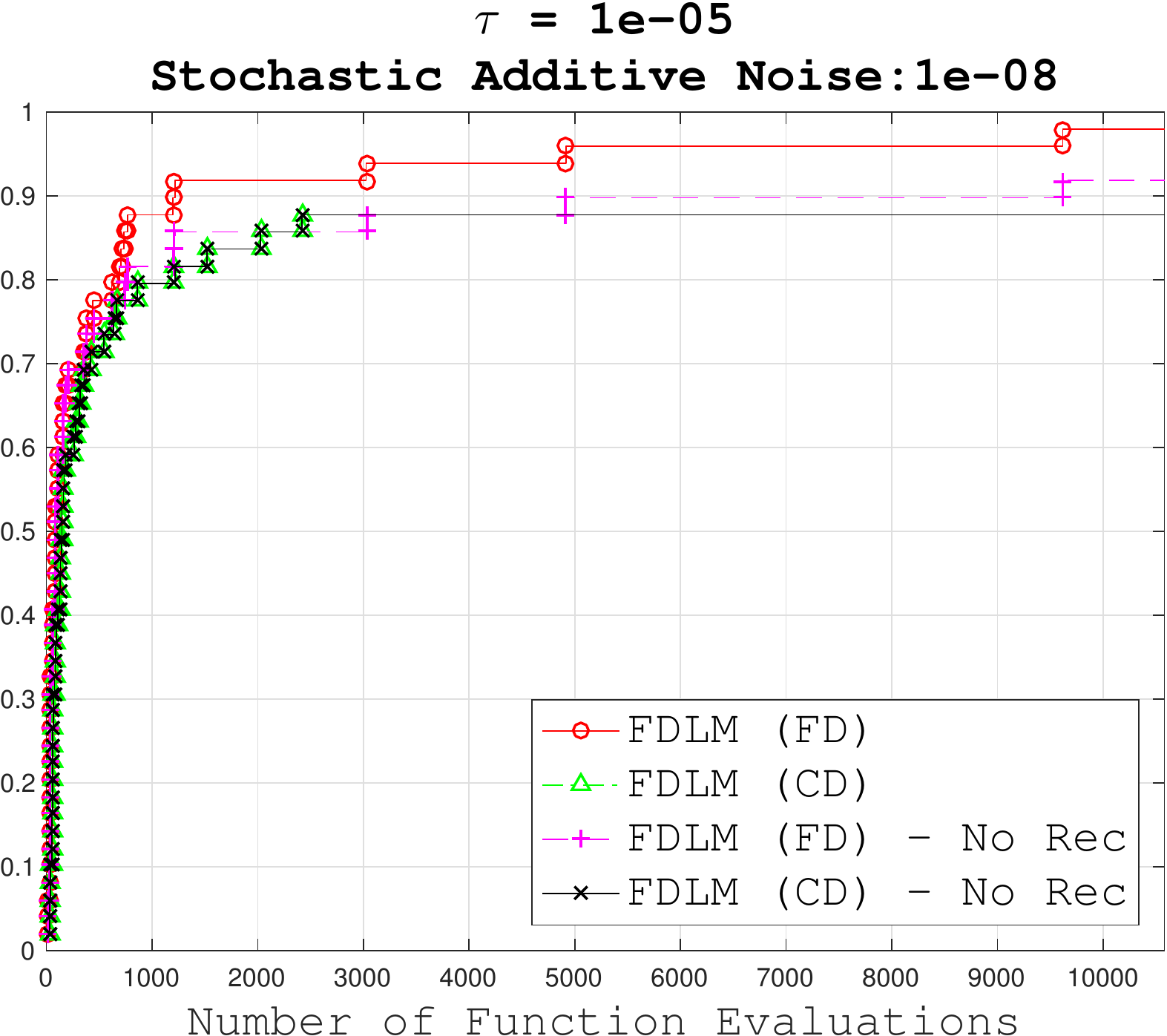}
\includegraphics[width=0.24\textwidth]{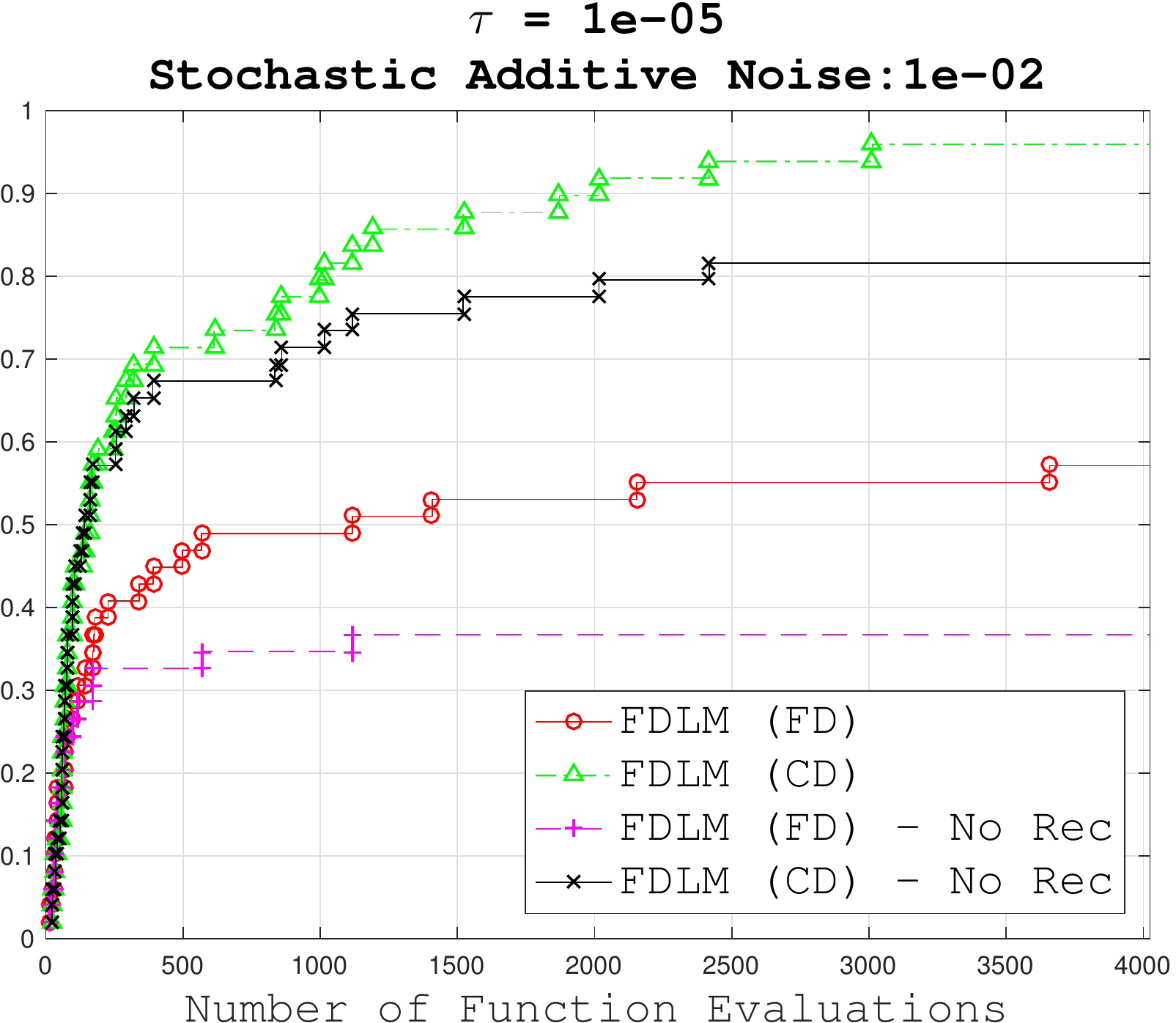}
\includegraphics[width=0.24\textwidth]{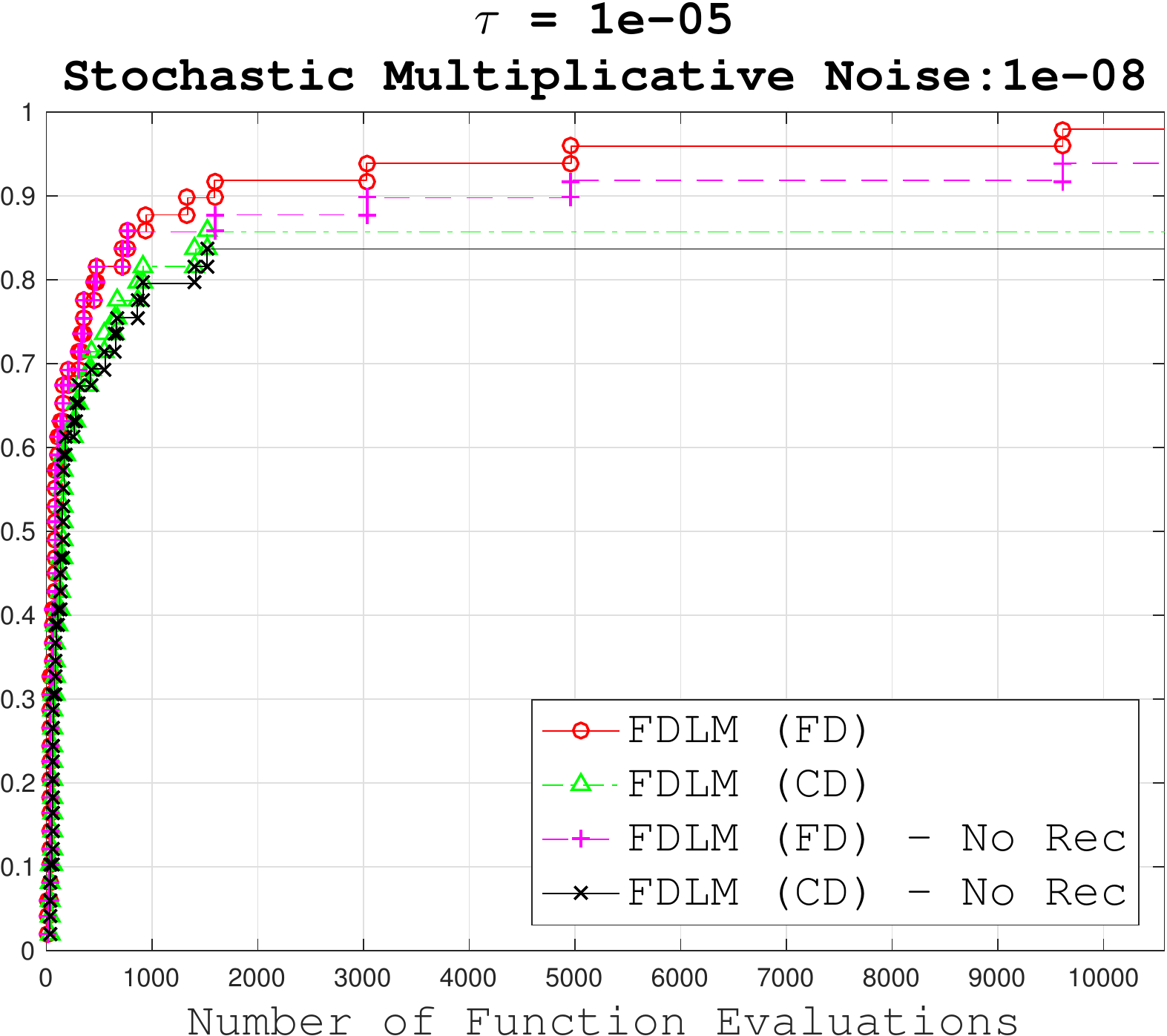}
\includegraphics[width=0.24\textwidth]{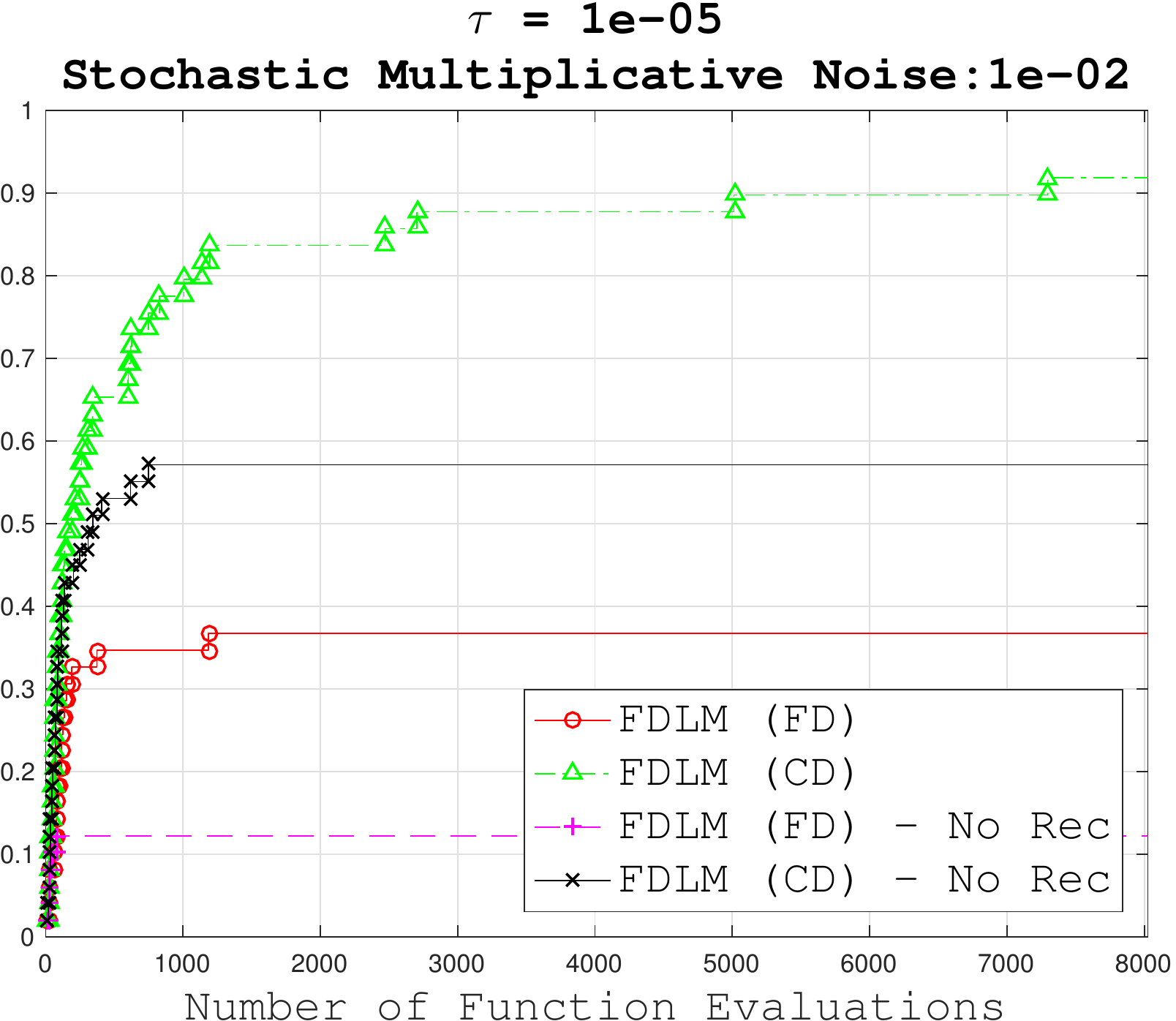}

\par\end{centering}
\caption{\small Performance of the FDLM method using forward and central differences with and without the \texttt{Recovery} procedure. The figure plots data profiles for stochastic additive and stochastic multiplicative noise (noise levels $10^{-8}$, $10^{-2}$). The first two plots show results for stochastic additive noise, and the last two plots show results for stochastic multiplicative noise.}
\label{rec_exp_data}
\end{figure}

\newpage

\section*{Acknowledgments} We are grateful to Jorge Mor\'e and Stefan Wild for many useful suggestions and insights on noise estimation and derivative-free optimization.

\bibliographystyle{siamplain}
\bibliography{references}
\end{document}

% --- supplement: ex_supplement.tex ---

\maketitle

\section{A detailed example}

Here we include some equations and theorem-like environments to show
how these are labeled in a supplement and can be referenced from the
main text.
Consider the following equation:
\begin{equation}
  \label{eq:suppa}
  a^2 + b^2 = c^2.
\end{equation}
You can also reference equations such as \cref{eq:matrices,eq:bb} 
from the main article in this supplement.

\lipsum[100-101]

\begin{theorem}
  An example theorem.
\end{theorem}

\lipsum[102]
 
\begin{lemma}
  An example lemma.
\end{lemma}

\lipsum[103-105]

Here is an example citation: \cite{KoMa14}.

\section[Proof of Thm]{Proof of \cref{thm:bigthm}}
\label{sec:proof}

\lipsum[106-114]

\section{Additional experimental results}
\Cref{tab:foo} shows additional
supporting evidence. 

\begin{table}[htbp]
  \caption{Example table}
  \label{tab:foo}
  \centering
  \begin{tabular}{|c|c|c|} \hline
   Species & \bf Mean & \bf Std.~Dev. \\ \hline
    1 & 3.4 & 1.2 \\
    2 & 5.4 & 0.6 \\ \hline
  \end{tabular}
\end{table}

\bibliographystyle{siamplain}
\bibliography{references}